\documentclass[11pt]{article}

\PassOptionsToPackage{nameinlink,noabbrev}{cleveref}
\usepackage[T1]{fontenc}
\usepackage[margin=0.92in]{geometry}

\usepackage{amsmath,amssymb,amsthm}
\usepackage{graphicx}
\usepackage{xcolor}
\usepackage[colorlinks=true, allcolors=blue]{hyperref}

\usepackage{algorithm} 
\usepackage{algorithmic}  
\usepackage[algo2e]{algorithm2e} 
\usepackage{natbib}
\newtheorem{thm}{Theorem}[section]
\newtheorem{theorem}{Theorem}[section]

\newtheorem{proposition}[thm]{Proposition}

\newtheorem{lemma}[thm]{Lemma}

\newcommand{\one}{\mathbf 1}

\title{On Minimax Optimality and Uniqueness of Fixed-Step First-Order Methods for Smooth Convex Optimization}

\author{
Benjamin Grimmer\thanks{All authors contributed equally and are listed in alphabetical order by surname.}\\
\small Johns Hopkins University\\
\small Massachusetts Institute of Technology
\and
Sunghyeon Jo\footnotemark[1]\\
\small Georgia Institute of Technology
\and
Chanwoo Park\footnotemark[1]\\
\small Massachusetts Institute of Technology
}
\date{}

\begin{document}
\maketitle

\begin{abstract}
    This paper considers the design of optimal fixed-step first-order methods for high-dimensional minimization of $L$-smooth convex functions. For optimizing worst-case performance measured via suboptimality of the final function value (relative to the initial squared distance to a minimizer), we provide an algebraic proof of the optimality of the optimized gradient method (OGM) and establish its uniqueness among all fixed-step first-order methods. For the alternative measure of final squared gradient norm (relative to initial suboptimality), we prove the OGM-G method is optimal and uniquely so among fixed-step first-order methods. Finally, for the setting measuring the final squared gradient norm (relative to the initial squared distance to a minimizer), we show the recently proposed Lemniscate method is optimal and uniquely so. Our proofs rely on algebraic reductions for lower bound arguments rather than traditional information-theoretic bounds, which were previously only able to establish OGM's optimality but not uniqueness.
\end{abstract}

\section{Introduction}
We consider the setting of smooth convex optimization via fixed-step first-order methods. Namely, we consider optimization problems, given an initialization $x_0\in\mathbb{R}^d$, of the form
\[
    f_\star = \min_{x\in\mathbb{R}^d} f(x),
\]
where $f\colon \mathbb{R}^d\rightarrow \mathbb{R}$ is $L$-smooth (meaning it has an $L$-Lipschitz continuous gradient), convex, and attains its minimum at some point $x_\star\in\mathbb{R}^d$. The suboptimality of the function value at $x$ is $f(x)-f_\star$. We consider two models for initial conditions on such problems: we will suppose either the initialization has bounded squared distance to a minimizer $\frac{1}{2}\|x_0-x_\star\|^2\leq R$ or has bounded initial suboptimality $f(x_0)-f_\star \leq R$.

Without loss of generality, we can take $L=R=1$ by simple rescalings. We denote the resulting family of such problem instances in dimension $d$ with bounded squared distance to a minimizer by 
\[ \mathtt{P}_\mathtt{dist}(d) = \{(f,x_0)\mid f\text{ is $1$-smooth, convex, attains its minimum at some $x_\star$ with $\frac{1}{2}\|x_0-x_\star\|^2\leq 1$} \} \]
and the family of problem instances with bounded initial suboptimality by
\[\mathtt{P}_\mathtt{subopt}(d) = \{(f,x_0)\mid f\text{ is $1$-smooth, convex, attains its minimum at some $x_\star$ with $f(x_0) - f_\star \leq 1$} \}.\]

For a given iteration budget $N\geq 1$, we consider the design of $N$-step fixed-step first-order methods, defined by a lower triangular matrix $W\in\mathbb{R}^{(N+1)\times(N+1)}$, iterating for $n=1,\dots,N$
\begin{equation}
    x_n = x_{0} - \frac{1}{L}\sum_{i=0}^{n-1} W_{n,i} g_i \label{eq:fixed-step-definition}
\end{equation}
where $g_i=\nabla f(x_i)$. The coefficients $W_{n,i}$ are prescribed in advance and do not depend on the observed function values or gradients. Here, for ease, we index the columns and rows of $W$ from $0$ to $N$. It is convenient for us to set the diagonal of $W$ equal to one (called a unit lower triangular matrix). In this notation, one then arrives at a gradient descent step from each iterate by
\begin{equation*}
    x_n - \frac{1}{L} g_n = x_{0} - \frac{1}{L}\sum_{i=0}^{n} W_{n,i} g_i
\end{equation*}

We denote the set of all such fixed-step first-order methods with iteration budget $N$, identified by their matrix $W$, by $\mathtt{A}_\mathtt{fixed}(N)$. Among these, we seek to identify the methods with the best worst-case guarantee. We consider two distinct performance measures, namely the final suboptimality $f(x_N)-f_\star$ and the final squared gradient norm $\frac{1}{2}\|\nabla f(x_N)\|^2$. From these, we consider the three central design problems for $N$-step methods in dimension $d$, denoted by
\begin{align}
    r_{D\rightarrow F} &= \inf_{W\in\mathtt{A}_\mathtt{fixed}(N)} \sup_{(f,x_0)\in\mathtt{P}_\mathtt{dist}(d)} f(x_N)-f_\star, \label{eq:disttosubopt-design}\\
    r_{F\rightarrow G} &= \inf_{W\in\mathtt{A}_\mathtt{fixed}(N)} \sup_{(f,x_0)\in\mathtt{P}_\mathtt{subopt}(d)} \frac{1}{2}\|\nabla f(x_N)\|^2, \label{eq:subopttograd-design}\\
    r_{D\rightarrow G} &= \inf_{W\in\mathtt{A}_\mathtt{fixed}(N)} \sup_{(f,x_0)\in\mathtt{P}_\mathtt{dist}(d)} \frac{1}{2}\|\nabla f(x_N)\|^2. \label{eq:disttograd-design}    
\end{align}

The first algorithm design problem above is attained by the optimized gradient method (OGM) as analyzed by~\cite{KF16}; its exact information-theoretic optimality among deterministic first-order methods was established by~\cite{DR17}. Prior works have not been able to analytically resolve the optimal solution to the second problem~\eqref{eq:subopttograd-design}, but the OGM-G algorithm of~\cite{KF21} is conjectured to attain it. Very recently, a conjectured solution to the third problem, called the Lemniscate method, was proposed and analyzed by~\cite{KimRyuDasGupta26}.

In this work, we take an algebraic (not information-theoretic) approach to understanding these problems. From this, we arrive at an algebraic proof that OGM solves~\eqref{eq:disttosubopt-design}, establishing for the first time that it does so uniquely. Second, applying ideas from the H-duality theory of~\cite{KOPR23}, we establish the optimality of OGM-G for~\eqref{eq:subopttograd-design} and prove its uniqueness. Finally, extending these techniques to the distance-to-gradient setting, we establish the optimality of the Lemniscate method for~\eqref{eq:disttograd-design} and again establish uniqueness.

Formally, we show
\begin{theorem}\label{thm:disttosubopt}
    If $d\geq N+2$, the OGM algorithm of~\cite{KF16} provides the unique matrix $W$ solving~\eqref{eq:disttosubopt-design}.\\
    In particular, the minimax optimal rate is $r_{D\rightarrow F} = 1/\theta_N^2 = \Theta(1/N^2)$, where $\theta_N$ is defined in~\eqref{eq:ogm-rate-recursion}.
\end{theorem}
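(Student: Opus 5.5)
The argument has two halves: an upper bound for OGM and a matching lower bound for every fixed-step $W$, proved in a way that forces equality cases. The upper bound $f(x_N)-f_\star\le 1/\theta_N^2$ for OGM is the known performance-estimation certificate of~\cite{KF16}. I would restate it as a nonnegative combination of the smooth-convex interpolation inequalities
\[ Q_{i,j}:\; f_i \ge f_j + \langle g_j, x_i-x_j\rangle + \tfrac12\|g_i-g_j\|^2, \qquad i,j\in\{0,\dots,N,\star\}. \]
This combination, together with $\tfrac12\|x_0-x_\star\|^2\le 1$, yields the bound as a sum of squares. So the real content is the lower bound and uniqueness.

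For the lower bound I would work algebraically with a restricted family of hard instances rather than a resisting oracle. Fix an arbitrary unit lower triangular $W$. I would use $d\ge N+2$ to place $x_0-x_\star$ and the gradients $g_0,\dots,g_N$ in mutually orthogonal directions. The gradients are then parameters: scalars $\gamma_i$ with $g_i=\gamma_i e_i$. Function values come from a piecewise-quadratic (Huber-like) interpolant, in the spirit of the Drori--Taylor construction. The key step is to pick, for each $W$, a specific feasible instance in which a chosen set of interpolation inequalities holds with equality. These are the tight constraints $Q_{i,i+1}$, $Q_{\star,i}$ and $Q_{N,\star}$ appearing in the OGM certificate. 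Tightness lets $f(x_N)-f_\star$ be written as an explicit function of $W$ and the $\gamma_i$. Maximizing over the $\gamma_i$ then gives a closed-form lower bound $\Phi(W)\ge$ worst case of $W$.

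Next, I would show $\Phi(W)\ge 1/\theta_N^2$, with equality iff $W=W_{\mathrm{OGM}}$, by induction on $N$. Eliminating the last row of $W$ should leave a one-step quadratic optimization in $W_{N,0},\dots,W_{N,N-1}$. Its optimum reproduces the recursion $\theta_{N}$ in~\eqref{eq:ogm-rate-recursion} together with the OGM last-row coefficients, and its strict convexity gives uniqueness of that row. The induction hypothesis then pins down the earlier rows. This is where orthogonality of the $e_i$ helps: cross terms vanish, so the objective separates row by row.

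I expect the main obstacle to be choosing the hard instance so that $\Phi$ is both genuinely a lower bound for all $W$ (the instance must remain convex and $1$-smooth even for poor $W$) and tight with a unique minimizer. If the Huber construction fails to be interpolable for some $W$, I would instead reason dually. The OGM certificate's multipliers $\lambda_{ij}$ are strictly positive on a specific support. Complementary slackness then says any optimal $W$ must make those same inequalities tight on the worst-case instance of OGM, and this family of equalities determines $W$ entry by entry, giving uniqueness. Finally, $1/\theta_N^2=\Theta(1/N^2)$ follows because $\theta_N$ grows linearly from its recursion.
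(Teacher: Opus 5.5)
Your upper bound (the Kim--Fessler certificate) and the $\Theta(1/N^2)$ remark are fine. The lower-bound and uniqueness half, however, has a genuine gap, and it starts with the hard-instance family itself.

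\textbf{The instance family is degenerate.} If $x_0-x_\star$ and $g_0,\dots,g_N$ are mutually orthogonal, then $x_i-x_\star=(x_0-x_\star)-\sum_{j<i}W_{i,j}g_j$ is orthogonal to $g_i$. So $Q_{\star,i}$ reads $f_i-f_\star\le-\tfrac12\|g_i\|^2$, while $Q_{i,\star}$ reads $f_i-f_\star\ge\tfrac12\|g_i\|^2$. Hence every $g_i=0$: the family contains only the trivial instance, certifies the bound $0$, and can never make $Q_{\star,i}$ tight. This is structural, not a detail of the Huber construction. OGM's own certificate leaves the rank-one remainder $\tfrac r2\|x_0-x_\star-\sum_{i<N}2\theta_ig_i-\theta_Ng_N\|^2$, so any instance attaining its worst case has $x_0-x_\star$ in the span of the gradients.

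\textbf{Strictness is never established.} Even with a repaired family, you give no mechanism making $\Phi(W)$ strictly exceed $1/\theta_N^2$ for every $W\neq W_{\mathrm{OGM}}$. The separation of cross terms and the last-row quadratic program are asserted, not derived. Strictness is exactly what instance-based (information-theoretic) lower bounds have not delivered.

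\textbf{The induction on $N$ fails as stated.} The leading $N\times N$ block of the $N$-step OGM matrix is not the $(N-1)$-step OGM matrix. Rows $n<N$ use the ordinary recursion, and only the last row uses the terminal one. For example, two-step OGM takes first step $(1+\sqrt5)/2$, while the optimal one-step method uses $3/2$. So the earlier rows are not pinned down by optimality at a shorter horizon.

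\textbf{The dual fallback misuses complementary slackness.} Complementary slackness pairs primal and dual optima of the PEP for one fixed $W$. OGM's multipliers are generally infeasible for another $W'$, since the dual PSD block contains $\operatorname{sym}(\Lambda^\top W')$. OGM's worst-case data are also infeasible for $W'$, since $\mathcal Q_{i,j}(F,G;W)$ depends on $W$. Finally, the joint problem in $(W,\Lambda)$ is bilinear, so tightness does not transfer across methods.

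What is missing is a lower bound that ranges over all certificates of an arbitrary $W$ and has a rigid equality case. The logic must run from $W'$'s own certificate to OGM's, not the reverse. The paper gets this dually:
\begin{itemize}
\item Strong duality supplies certificates $\Lambda$ approaching the exact PEP value of any $W$.
\item Factoring $-\Lambda=TC$ into sign-constrained triangular factors turns the PSD constraint into a triangular PSD completion. This eliminates $W$ and leaves only the diagonal inequalities $(\mathbf1^\top Te_i)^2/(2r)\le T_{i,i}-\tfrac12(e_N^\top C^{-1}e_i)^2$.
\item Solving $Tb=e_N$ with $0\le b\le\mathbf1$ and setting $\delta_i=(\mathbf1^\top Te_i)b_i$ projects every feasible certificate onto the simplex problem. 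That problem's unique minimizer has value $1/\theta_N^2$.
\item Equality forces $C=I$, $T=T_F^\star$ and tight diagonals. The completion slack is then zero, so $W$ is unique.
\item A compactness argument on the factors handles non-attainment of the dual infimum.
\end{itemize}
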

\begin{theorem}\label{thm:subopttograd}
    If $d\geq N+2$, the OGM-G algorithm of~\cite{KF21} provides the unique matrix $W$ solving~\eqref{eq:subopttograd-design}.\\
    In particular, the minimax optimal rate is $r_{F\rightarrow G} = 1/\theta_N^2 = \Theta(1/N^2)$, where $\theta_N$ is defined in~\eqref{eq:ogm-rate-recursion}.
\end{theorem}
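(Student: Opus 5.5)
The plan is to obtain Theorem~\ref{thm:subopttograd} from Theorem~\ref{thm:disttosubopt} through H-duality~\cite{KOPR23}. For a unit lower triangular $W$, define its H-dual $W^{A}$ by anti-transposition, $W^{A}_{n,i}=W_{N-i,N-n}$. H-duality gives an exact correspondence between the performance-estimation (PEP) certificates of the two problems. The energy-function/Lyapunov certificates proving $f(x_N)-f_\star\le \tau\cdot\frac12\|x_0-x_\star\|^2$ for $W$ are in bijection with certificates proving $\frac12\|\nabla f(x_N)\|^2\le \tau\,(f(x_0)-f_\star)$ for $W^{A}$. This bijection is between nonnegative multipliers on the interpolation inequalities $Q_{i,j}$, with the roles of $\star$ and the index $N$ swapped. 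Since OGM-G is exactly the H-dual of OGM, the upper bound $r_{F\rightarrow G}\le 1/\theta_N^2$ follows from the known OGM-G analysis of~\cite{KF21}.

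The substance is the lower bound and uniqueness. The first step is to fix a candidate method $W$ with worst-case value $\tau<\infty$ on $\mathtt{P}_\mathtt{subopt}(d)$. Because $d\ge N+2$, the worst case equals the value of the Gram-matrix PEP SDP. Strong duality (a Slater point exists) then produces a dual certificate consisting of multipliers $\lambda_{i,j}\ge 0$ on the interpolation inequalities among $\{x_0,\dots,x_N,x_\star\}$ and a PSD slack matrix $S$, such that
\[
\tau (f_0-f_\star) - \tfrac12\|g_N\|^2 - \sum \lambda_{i,j}Q_{i,j} = \text{(PSD form in gradients)}.
\]
The second step is to apply the H-duality transformation to this certificate. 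It yields a valid certificate showing that $W^{A}$ attains $f(x_N)-f_\star\le \tau\cdot\frac12\|x_0-x_\star\|^2$.

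I expect this transfer to be the main obstacle. The H-duality theorem of~\cite{KOPR23} is stated for certificates of a restricted form: multipliers only on consecutive and $\star$-pairs, with the PSD slack exactly zero. An arbitrary optimal dual solution need not have this structure. I would first try a reduction argument. The idea is to show that any feasible dual certificate for the $F\to G$ problem can be modified into one whose slack is a PSD form, and that H-duality maps such a form to a PSD form. Since anti-transposition is a congruence by the reversal permutation, it preserves semidefiniteness, so the transformed certificate stays valid. If the general certificate does not transfer, the fallback is to argue directly via the algebraic lower-bound reduction used for Theorem~\ref{thm:disttosubopt}. That reduction applies specific quadratic/Huber-type test instances forcing $\tau\ge 1/\theta_N^2$. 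I would mirror those instances under the reversal $i\mapsto N-i$, swapping the roles of $x_0$ and $x_\star$.

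Once the transfer is established, the proof concludes as follows. Theorem~\ref{thm:disttosubopt} gives $\tau\ge 1/\theta_N^2$ for every $W^{A}$, hence $r_{F\rightarrow G}\ge 1/\theta_N^2$. If equality $\tau=1/\theta_N^2$ holds, then $W^{A}$ attains the optimal $D\to F$ value, so by the uniqueness part of Theorem~\ref{thm:disttosubopt} it equals OGM. Because anti-transposition is an involution on unit lower triangular matrices, $W$ equals the H-dual of OGM, which is OGM-G. The remaining claim $1/\theta_N^2=\Theta(1/N^2)$ is inherited from Theorem~\ref{thm:disttosubopt}.
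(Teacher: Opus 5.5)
Your reduction would be logically sound if you had the transfer step: every feasible dual certificate of value $\tau$ for the $F\rightarrow G$ PEP of $W$ yields a feasible certificate of value $\tau$ for the $D\rightarrow F$ PEP of the H-dual method. Theorem~\ref{thm:disttosubopt} would then give both the bound and uniqueness. But that step is essentially the whole content of the theorem, and it is not available. Your proposed repair targets the wrong difficulty. The slack of a dual certificate is PSD by feasibility, and a congruence keeps it PSD. What is not known to survive is the polyhedral part: nonnegativity of the transformed $\lambda_{i,j}$, and the sum constraints. These are $\Lambda^\top\mathbf 1\le 0$, $\Lambda\mathbf 1\le -e_N$ on one side versus $\Lambda\mathbf 1\le 0$, $\Lambda^\top\mathbf 1=-re_0$ on the other. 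The H-dual correspondence is not a relabeling of multipliers with $\star$ and $N$ swapped. Already for the structured energy proofs of~\cite{KOPR23} it reverses and reciprocates the weight sequence. In matrix form it amounts to inverting the multiplier matrix; compare $T_G^\star$, which Appendix~\ref{subsec:discovering-ogmg} builds from the reciprocals $1/(T_F^\star)_{i,i}$. Signs can be checked by hand for special certificates. A lower bound, however, needs them preserved for arbitrary near-optimal certificates of an arbitrary $W$. That is exactly the caveat the paper records for the general map of~\cite{Shu2026Hduality}. Your fallback does not close the gap either. The proof of Theorem~\ref{thm:disttosubopt} uses no test instances; it is a dual factorization argument. ``Mirroring'' instances by swapping $x_0$ and $x_\star$ has no meaning for iterates generated forward from $x_0$. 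No exact worst-case family for $F\rightarrow G$ was known, which is why the question was open.

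Separately, your H-dual is misdefined in these coordinates. H-duality anti-transposes the \emph{incremental} matrix $H$. On the cumulative matrix this is $\widehat W=SW^\top S^{-1}$, not $W^A_{n,i}=W_{N-i,N-n}$. For $N=2$, write $W_{1,0}=a$, $W_{2,0}=b$, $W_{2,1}=c$. Then $\widehat W_{2,0}=b+c-a$, whereas $W^A_{2,0}=b$. For OGM, $a\approx1.618\neq c\approx1.787$, so your $W^A_{\mathtt{OGM}}$ is not OGM-G. This is easily repaired, but as written your last step misidentifies the optimizer.

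For comparison, the paper never transports certificates; it proves the theorem directly. It factors $-r^{-1}\Lambda^\top=TC$ with $C\mathbf 1=e_0$ and $T_{0,0}=1$. It eliminates $W$ by a triangular PSD completion, yielding $Tb=e_N$, $b_0^2\le r$, and $\tfrac{T_{i,i}}{2}b_i^2\le r$. It then uses the inverse-entry bounds $q_i^2\le T_{i,i}D_i$, with $q=C^{-\top}e_N$ and $D_i=\sum_{j\ge i}b_jq_j$. These project the problem onto the same simplex problem in reversed order, via $\delta_{N-i}=b_iq_i/D_0$. The H-dual symmetry therefore enters only at the level of the scalar simplex problem, where no sign conditions need verifying. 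Its unique optimizer, together with the limiting-factor argument, pins down $(T_G^\star,C_G^\star)$ and hence $W_{\mathtt{OGM-G}}$.
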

\begin{theorem}\label{thm:disttograd}
    If $d\geq N+2$, the Lemniscate algorithm of~\cite{KimRyuDasGupta26} provides the unique matrix $W$ solving~\eqref{eq:disttograd-design}.\\
    In particular, the minimax optimal rate is $r_{D\rightarrow G} = 1/\Omega_N^2 = \Theta(1/N^4)$, where $\Omega_N$ is defined in~\eqref{eq:lemni-rate-recursion}.
\end{theorem}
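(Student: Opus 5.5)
The plan is to prove Theorem~\ref{thm:disttograd}, which is the last statement above; Theorems~\ref{thm:disttosubopt} and~\ref{thm:subopttograd} follow the same template. The argument has two halves. The upper bound shows that the Lemniscate matrix $W^{\mathrm{L}}$ has worst-case value at most $1/\Omega_N^2$. For this I would take the performance-estimation (PEP) certificate of~\cite{KimRyuDasGupta26}. Concretely, this means nonnegative multipliers $\lambda_{i,j}$ on the interpolation inequalities
\[
f_i \ge f_j + \langle g_j, x_i-x_j\rangle + \tfrac12\|g_i-g_j\|^2, \qquad i,j\in\{0,\dots,N,\star\},
\]
such that
\[
\tfrac{1}{\Omega_N^2}\cdot\tfrac12\|x_0-x_\star\|^2-\tfrac12\|g_N\|^2 = \sum \lambda_{i,j}(\cdots) + \text{(PSD quadratic form)}.
\]
I would only need to cite it or re-verify it.

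The lower bound and uniqueness are the real content. Fix an arbitrary unit lower triangular $W$. I would build a finite family of explicit hard instances in dimension $N+2$. Since $d\ge N+2$, there is room for $g_0,\dots,g_N$ and $x_0-x_\star$ to be mutually orthogonal-ish directions. The instances are piecewise quadratic/Huber-type functions whose gradients at the iterates are prescribed scalar multiples of orthonormal vectors $e_0,\dots,e_N$. For these instances each iterate $x_n = x_0-\sum_i W_{n,i}g_i$ is an explicit affine function of $W$, so the smooth-convex interpolation conditions become explicit polynomial inequalities in the entries of $W$ and a few free scalars.

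The key reduction is to recast the worst-case value as a semidefinite/Gram problem, $\sup$ over Gram matrices $G\succeq0$ and function values $F$. Then I would show, algebraically, that for any $W$ the dual certificate can exist with value $\le 1/\Omega_N^2$ only if the tight constraints match those of the Lemniscate certificate. Complementary slackness then forces the feasible primal worst-case Gram matrix for $W^{\mathrm{L}}$, which is rank-one-ish in a structured way, to also be admissible for $W$. Feeding that specific primal point into the interpolation conditions yields equalities that pin down $W$ row by row.

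So I would proceed by induction on $n$. Assume rows $0,\dots,n-1$ of $W$ agree with Lemniscate. Use a one-parameter perturbation of the Lemniscate worst-case instance that isolates row $n$. If any $W_{n,i}$ deviates, a first-order expansion in the perturbation shows the objective $\tfrac12\|g_N\|^2$ strictly exceeds $1/\Omega_N^2$, using the recursion~\eqref{eq:lemni-rate-recursion} to identify the critical coefficients.

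I expect the main obstacle to be constructing these perturbations so they remain exactly $1$-smooth convex interpolable. The plan is to keep all interpolation inequalities that are strict in the Lemniscate worst case strict, and to handle the tight ones through the sign structure of the Lemniscate multipliers. If a direct construction fails, I would fall back on an H-duality argument that transfers the problem to a dual $F\to G$-type problem where the structure is triangular. That is the same route by which Theorem~\ref{thm:subopttograd} is obtained from Theorem~\ref{thm:disttosubopt} via~\cite{KOPR23}.

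Finally, $\Omega_N=\Theta(N^2)$ follows from the recursion by a routine induction, giving $\Theta(1/N^4)$.
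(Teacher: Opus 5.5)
There is a genuine gap. Nothing in your plan proves $v_{D\rightarrow G}(W)\ge 1/\Omega_N^2$ for an \emph{arbitrary} unit lower triangular $W$, and that is the whole content of the theorem.

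\begin{itemize}
\item The claim that ``for any $W$ the dual certificate can exist with value $\le 1/\Omega_N^2$ only if the tight constraints match those of the Lemniscate certificate'' restates the theorem rather than proving it. Your hard instances are also never specified for a general $W$.
\item The complementary-slackness step fails. It relates primal and dual optima of the PEP for one fixed $W$. The interpolation quantities $\mathcal Q_{i,j}(F,G;W)$ depend on $W$, so the Lemniscate worst-case pair $(F,G)$ is generally not admissible for another method.
\item The row-by-row first-order perturbation can at best show that $W_\mathtt{Lemni}$ is a strict \emph{local} minimizer of the worst-case value. The design problem is bilinear in $(W,\Lambda)$, and the worst-case value is not known to be convex in $W$, so this says nothing about methods far from $W_\mathtt{Lemni}$.
\item The rows do not decouple for an induction anyway: every entry of $W_\mathtt{Lemni}$ depends on the horizon through $\Omega_N$ and the $\phi_i$.
\item The H-duality fallback does not help. \cite{KOPR23} transports upper-bound certificates, not lower bounds over all $W$. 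Moreover, $W_\mathtt{Lemni}$ is its own H-dual (its increment matrix $H$ is invariant under anti-transposition), so there is no distinct partner problem to import optimality from. The paper also proves Theorem~\ref{thm:subopttograd} directly by a mirrored argument, not by transfer from Theorem~\ref{thm:disttosubopt}.
\end{itemize}

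The missing idea is a $W$-free relaxation of the bilinear design problem that is exact at the optimum. The paper's route is as follows.
\begin{itemize}
\item For each fixed $W$, it takes near-optimal certificates $\Lambda$ from strong duality. It factors $-\Lambda/\sqrt r=TC$ with $T$ upper triangular and $C$ unit lower triangular (Proposition~\ref{prop:M-matrix-factorization}).
\item A triangular PSD completion shows that some $W$ satisfies the semidefinite constraint if and only if $(\mathbf 1^\top Te_i)^2+(e_N^\top C^{-1}e_i)^2\le 2\sqrt r\,T_{i,i}$ for all $i$. This removes $W$ entirely.
\item It solves $Tb=e_N$ and forms $E_i=\sum_{j\le i}(\mathbf 1^\top Te_j)b_j$ and $D_i=\sum_{j\ge i}b_j(e_N^\top C^{-1}e_j)$. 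The cuts $T_{i,i}b_i\le E_i$ and $b_i\le D_i$ then give a monotone planar path $(E_{i-1},D_i)$.
\item Polar coordinates and the substitution $\rho_i=\tan(\vartheta_i/2)$ project this path onto the scalar Lemniscate problem. Its unique optimum yields $\sqrt r\ge 1/\Omega_N$ for every $W$.
\item The equality case forces $T=T_L^\star$, $C=C_L^\star$, and tightness of every diagonal condition. The zero-slack completion then determines $W$ uniquely, and Proposition~\ref{prop:lemni-limiting-factors} handles a dual infimum that is not attained.
\end{itemize}
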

\noindent Note that the high-dimensionality assumption $d\geq N+2$ is standard in the dimension-independent analysis of first-order methods~\cite{nemirovsky1983problem}. This condition is also standard for the tight application of the Performance Estimation Problems (PEPs) due to~\cite{DT14,THG17,taylor2017composite}, upon which our work is based.

All three proofs follow the same algebraic template. Lemmas~\ref{lem:ogm-TC}, \ref{lem:ogmg-TC}, and~\ref{lem:lemni-TC}  factor the relevant PEP multiplier matrix into triangular factors. The subsequent elimination and projection lemmas reduce each design problem to an optimization problem in scalar variables with a unique optimizer. Its equality conditions determine the optimal triangular factors, after which a unique positive semidefinite completion determines the method matrix $W$. Appendix~\ref{app:rediscovery} shows that these equality conditions recover OGM, OGM-G, and Lemniscate acceleration without assuming their formulas in advance.

This approach contrasts with the traditional information-theoretic approach pioneered by~\cite{nemirovsky1983problem}, which underlies most lower bounds in first-order optimization theory. Our approach may provide new means to address other open complexity questions in optimization. Hence, it may be of independent interest beyond the particular theorems herein. An important limitation of this algebraic approach is that it only establishes optimality among fixed-step first-order methods (not larger classes of gradient-span or deterministic gradient methods addressed by classic zero-chain and resisting oracle approaches). It remains open, for example, whether OGM-G and Lemniscate are optimal among these larger classes. 

\paragraph{Outline.} Section~\ref{sec:prelim} provides preliminary results from performance estimation and linear algebra needed for our development. Proofs of results related to these preliminaries are given in our appendix for completeness. The following Sections~\ref{sec:ogm}--\ref{sec:lemni} then proceed to prove each of Theorems~\ref{thm:disttosubopt}--\ref{thm:disttograd}. Each of these proofs is structurally similar but diverges sufficiently in technical details that we present each independently.

\section{Preliminaries} \label{sec:prelim}

\paragraph{Performance Estimation Problems (PEPs).}
The PEP framework of~\cite{DT14,THG17,taylor2017composite} formulates the worst-case performance of a fixed algorithm, namely, the inner optimization problems in~\eqref{eq:disttosubopt-design}--\eqref{eq:disttograd-design}. For ease of introducing this approach, we present the PEP formulation for the inner problem of finding an instance with the worst-case final suboptimality in~\eqref{eq:disttosubopt-design}. Note this framework is applicable to many settings beyond smooth convex optimization, for which we refer readers to the collection of examples documented in PEPit~\cite{pepit2022}.

A key insight underlying PEP is that to understand a fixed-step first-order method's performance, one must only consider the function values $f_n=f(x_n)$ and gradient values $g_n=\nabla f(x_n)$ at the iterates $x_n$. Together, these values define a trace of the algorithm's trajectory $((x_i,f_i,g_i))_{i=0}^N$. Often we include the minimizer in this trace, which has $f_\star=f(x_\star)$ and $g_\star=0$. Hence, with observed-index set $\mathcal{I}=\{0,\dots,N\}$ and full index set $\mathcal{I}_\star=\{\star\}\cup\mathcal{I}$, a method's full trace is $((x_i,f_i,g_i))_{i\in\mathcal{I}_\star}$. 

The fundamental interpolation theorem for $L$-smooth convex functions~\cite{THG17} states that a trace can be interpolated by some $L$-smooth convex function if and only if the following quantities are nonnegative
\begin{equation*}
    \mathcal{Q}_{i,j} = f_i - f_j - \langle g_j, x_i-x_j\rangle - \frac{1}{2L}\|g_i-g_j\|^2 \qquad \forall i\neq j\in\mathcal{I}_\star.
\end{equation*}

When $x_i$ and $x_j$ are generated by a fixed algorithm $W$, we can view $\mathcal{Q}_{i,j}$ as an affine function: Eliminating each $x_n$ for $n=1,\dots,N$ via~\eqref{eq:fixed-step-definition}, observe that $\mathcal{Q}_{i,j}(F,G; W)$ is an affine function of the vector $F=(f_0-f_\star,\dots,f_N-f_\star)$ and the positive semidefinite matrix $G = P^\top P$ where $P=[x_0-x_\star,g_0,\dots,g_N]$. For example, consider the suboptimality minimization task of~\eqref{eq:disttosubopt-design}. Noting that for $d\geq N+2$, any positive semidefinite $G$ can be factored to recover $P$, the inner maximization of~\eqref{eq:disttosubopt-design} can be equivalently expressed as
\begin{equation} \label{eq:PEP-SDP}
\begin{aligned}
    \sup_{F,G}\quad & f_N-f_\star\\
    \mathrm{s.t.}\quad
    &\mathcal{Q}_{i,j}(F,G;W)\geq0
    \qquad \forall i\neq j\in\mathcal{I}_\star,\\
    &\mathcal{D}(G)\geq0,\\
    &G\succeq0
\end{aligned}
\end{equation}
where $\mathcal{D}(G) = 1 - \frac{1}{2}\|x_0-x_\star\|^2$ is affine in $G$. For fixed $W$, this is a semidefinite program. In later sections when considering minimization of the squared gradient norm, similar semidefinite programs will be provided specialized to their goals.

Dually, this can be approached by considering nonnegative off-diagonal multipliers $\lambda_{i,j}$, a nonnegative multiplier $\sigma$, and a positive semidefinite matrix $Z$. To prove an upper bound $r$, these multipliers must satisfy the identity
\begin{equation} \label{eq:main-identity}
    r - (f_N-f_\star) = \sum_{i\neq j\in\mathcal{I}_\star} \lambda_{i,j}\mathcal{Q}_{i,j}(F,G;W) + \sigma \mathcal{D}(G) + \langle Z,G\rangle
\end{equation}
for all vectors $F$ and symmetric matrices $G$. 
Since both sides above are affine in $F,G$, enforcing that this identity holds is a linear system in $\lambda,\sigma,Z$ with one equation per coefficient. Later, when we consider joint optimization of $W$ as well, this identity will become bilinear.

Note that $(\lambda_{i,j})_{i,j\in\mathcal{I}_\star}$ include all ordered pairs involving $\star$. Its off-diagonal entries are the multipliers of the nontrivial interpolation constraints. Since $\mathcal{Q}_{i,i}$ is identically zero, we can set $\lambda_{i,i}$ freely (even as a negative value). For the suboptimality PEP, we enforce the convention that
\begin{equation}\label{eq:distance-lambda-diagonal}
    \lambda_{i,i}=-\sum_{j\in\mathcal{I}_\star\setminus\{i\}}\lambda_{j,i}
    \qquad \forall i\in\mathcal{I}.
\end{equation}
This diagonal convention and the previous convention for $W$ to have unit diagonal entries, while arbitrary, will simplify the subsequent analysis.

Then the dual semidefinite program to~\eqref{eq:PEP-SDP} for any fixed $W$ is
\begin{equation} \label{eq:PEP-SDP-dual-with-identity}
\begin{aligned}
    \inf_{\lambda,\sigma,r,Z}\quad & r\\
    \mathrm{s.t.}\quad
    &\text{identity~\eqref{eq:main-identity} holds},\\
    &\lambda_{i,j}\geq0 \quad \forall i\neq j\in\mathcal{I}_\star,\\
    &\sigma\geq0, \qquad Z\succeq0.
\end{aligned}
\end{equation}

The identity constraint in~\eqref{eq:PEP-SDP-dual-with-identity} can be solved explicitly. To this end, let $\Lambda\in\mathbb{R}^{(N+1)\times(N+1)}$ denote the matrix $(\lambda_{i,j})_{i,j\in\mathcal I}$, omitting multipliers involving $\star$, and let $\operatorname{sym}(A)=(A+A^\top)/2$. The resulting simplified dual statement is given below; its proof is deferred to Appendix~\ref{app:distance-dual}.

\begin{proposition}\label{prop:distance-dual}
For any unit lower triangular matrix $W$, fixing $L=1$, the PEP~\eqref{eq:PEP-SDP} has dual
\begin{equation} \label{eq:PEP-SDP-dual-clean}
\begin{aligned}
    \inf_{\Lambda,r}\quad & r\\
    \mathrm{s.t.}\quad
    &\Lambda_{i,j}\geq0
    \qquad \forall i\neq j\in\mathcal I,\\
    &\Lambda^\top\mathbf1\leq0,
    \qquad
    \Lambda\mathbf1\leq-e_N,\\
    &\begin{pmatrix}
        \frac r2 & \frac12\mathbf1^\top\Lambda\\[1mm]
        \frac12\Lambda^\top\mathbf1
        &-\operatorname{sym}(\Lambda^\top W)-\frac12e_Ne_N^\top
    \end{pmatrix}\succeq0.
\end{aligned}
\end{equation}
\end{proposition}
\noindent Strong duality holds between this primal--dual pair. The following proposition states this with proof deferred to Appendix~\ref{app:distance-duality}.
\begin{proposition}\label{prop:distance-duality}
For any method $W$, the primal PEP~\eqref{eq:PEP-SDP} and the dual PEP~\eqref{eq:PEP-SDP-dual-clean} have equal value.
\end{proposition}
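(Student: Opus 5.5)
We prove Proposition~\ref{prop:distance-duality} with a standard conic strong-duality argument: weak duality is immediate, and for zero gap we exhibit a Slater point for the primal~\eqref{eq:PEP-SDP} and check that its value is finite, which forces the dual~\eqref{eq:PEP-SDP-dual-clean} to attain the common value. Weak duality follows from identity~\eqref{eq:main-identity}. For any primal-feasible $(F,G)$ and dual-feasible $(\lambda,\sigma,r,Z)$, every term on the right-hand side is nonnegative, so $r\ge f_N-f_\star$. By Proposition~\ref{prop:distance-dual}, \eqref{eq:PEP-SDP-dual-clean} is a reparametrization of~\eqref{eq:PEP-SDP-dual-with-identity} after eliminating the identity constraint, so the two duals have the same value. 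It therefore suffices to prove strong duality between~\eqref{eq:PEP-SDP} and~\eqref{eq:PEP-SDP-dual-with-identity}.

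\textbf{Step 1: a Slater point for the primal.} We need $(F,G)$ with $G\succ0$, $\mathcal{Q}_{i,j}>0$ for all $i\neq j$, and $\mathcal{D}(G)>0$. The difficulty is that $\mathcal{Q}_{i,j}$ depends on $W$ through the $x_n$. The plan is as follows.
\begin{itemize}
\item Pick $P=[x_0-x_\star,g_0,\dots,g_N]$ with linearly independent columns (possible since $d\ge N+2$), scaled so that $\tfrac12\|x_0-x_\star\|^2<1$, and with the gradients $g_i$ small, say of order $\varepsilon$. Then $G=P^\top P\succ0$.
\item Let $x_n$ be determined by $W$ through~\eqref{eq:fixed-step-definition}.
\item Choose the function values $f_i-f_\star=M+c_i$, where $M$ is large and the $c_i$ are to be fixed below.
\end{itemize}
Constraints with $j=\star$ read $\mathcal{Q}_{i,\star}=f_i-f_\star-\tfrac12\|g_i\|^2$, which is positive for large $M$. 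Constraints with $i=\star$ read $\mathcal{Q}_{\star,j}=f_\star-f_j+\langle g_j,x_j-x_\star\rangle-\tfrac12\|g_j\|^2$; these are the obstruction, since a large $M$ makes them negative.

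\textbf{Step 2: satisfying the $\star$-row constraints.} I would instead take $G$ as a small perturbation of an interpolable strictly convex instance. For example, take a strongly convex quadratic $f(x)=\tfrac{\mu}{2}\|x-x_\star\|^2$ with $0<\mu<1$. For this function, strong convexity combined with smoothness below $1$ gives a strict version of the interpolation inequalities whenever the points are distinct, so every $\mathcal{Q}_{i,j}>0$. Three issues remain.
\begin{itemize}
\item The iterates might coincide. This is handled by choosing $\mu$ generic, so that the linear recursion avoids collisions; $x_0\ne x_\star$ also ensures nonzero gradients.
\item The resulting $G$ is singular, because all gradients lie in the span of the $x_i-x_\star$. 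We fix this by adding a small multiple of an independent direction in the extra dimensions. Since all $\mathcal{Q}_{i,j}$ are strict and affine in $(F,G)$, they remain positive under small perturbations of $G$.
\item The distance constraint $\mathcal{D}(G)>0$ holds after scaling $x_0-x_\star$.
\end{itemize}

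\textbf{Step 3: boundedness of the primal, and conclusion.} The primal value is finite. One route is that any $W$ has a finite worst case, because $x_N-x_\star$ is bounded by a $W$-dependent multiple of $\|x_0-x_\star\|$ via gradient Lipschitzness; alternatively, it suffices that the dual is feasible. Given a Slater point and a finite value, conic duality yields equal values with the dual attained.

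The main obstacle is Step~2: producing strictly feasible interpolation data for an arbitrary $W$, including degenerate cases where iterates coincide. That is where the genericity argument in $\mu$, together with the dimension assumption $d\ge N+2$, is needed.
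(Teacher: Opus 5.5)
\textbf{Gap: the primal need not have a Slater point.} Your Step~2 fails, and the failures are not corner cases you can discard. The proposition is asserted for every unit lower triangular $W$, and it is invoked for an arbitrary $W$ in the proof of Theorem~\ref{thm:disttosubopt}.

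For some methods two iterates coincide \emph{identically}, for every $(F,G)$, not just for special values of $\mu$. For instance, $W_{1,0}=0$ gives $x_1=x_0$, and then
\[
\mathcal{Q}_{0,1}+\mathcal{Q}_{1,0}=-\|g_0-g_1\|^2 .
\]
So every primal-feasible $G$ has $\|g_0-g_1\|^2=0$ and is singular. The primal feasible set then lies in a proper face of the PSD cone, so no Slater point exists at all. Choosing $\mu$ generically, or perturbing $G$, cannot help. In general, $x_n\equiv x_m$ for $n>m$ whenever $W_{n,i}=W_{m,i}$ for $i<m$ and $W_{n,i}=0$ for $m\le i<n$.

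Even without such structural coincidences, your isotropic quadratic can collide for every $\mu$. Take $N=3$ with $W_{1,0}=1$, $W_{2,0}=2$, $W_{2,1}=0$, $W_{3,0}=2$, $W_{3,1}=-2$, $W_{3,2}=1$. Then $x_3-x_1=-g_0+2g_1-g_2$, which vanishes on every quadratic, so $\mathcal{Q}_{1,3}=0$ rather than strictly positive. The claim that a generic $\mu$ avoids collisions is therefore false as stated.

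\textbf{Missing idea: strict feasibility on the dual side.} Dual strict feasibility is insensitive to these degeneracies. The paper takes
\[
\Lambda_t=-ta\,e_0e_0^\top-t\sum_{i=1}^N c_i(e_i-e_0)(e_i-e_0)^\top-e_Ne_N^\top .
\]
This satisfies the sign constraints and the row- and column-sum constraints of~\eqref{eq:PEP-SDP-dual-clean} for every $W$, since $\Lambda_t\mathbf1=\Lambda_t^\top\mathbf1=-ta\,e_0-e_N$.

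Write $d_i=g_i-g_0$ and $h_i=\sum_{j<i}W_{i,j}g_j$. The gradient part of the semidefinite block is then $t\mathcal{B}_W(g)+\langle g_N,h_N\rangle+\frac12\|g_N\|^2$. By the coercive bound of Lemma~\ref{lem:app-coercive-combination}, this is positive definite for large $t$. A Schur complement at a sufficiently large level $r$ then makes the whole block positive definite.

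All remaining constraints are polyhedral. So this partial-Slater dual point, together with primal feasibility and finiteness (your Step~3), gives equal values for every $W$. As written, your argument only covers those $W$ that admit a strictly feasible interpolating instance.
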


\noindent We can therefore plug this dual problem into
\eqref{eq:disttosubopt-design} to reformulate the algorithm design problem as
\begin{equation}\label{eq:disttosubopt-form2}
\begin{aligned}
    r_{D\rightarrow F}
    =\inf_{W,\Lambda,r}\quad & r\\
    \mathrm{s.t.}\quad
    &W\text{ is unit lower triangular},\\
    &\Lambda_{i,j}\geq0
    \qquad \forall i\neq j\in\mathcal I,\\
    &\Lambda^\top\mathbf1\leq0,
    \qquad
    \Lambda\mathbf1\leq-e_N,\\
    &\begin{pmatrix}
        \frac r2 & \frac12\mathbf1^\top\Lambda\\[1mm]
        \frac12\Lambda^\top\mathbf1
        &-\operatorname{sym}(\Lambda^\top W)-\frac12e_Ne_N^\top
    \end{pmatrix}\succeq0,
    \qquad r>0.
\end{aligned}
\end{equation}

\paragraph{$M$-Matrices and Triangular Factorizations.}
We denote the all-ones vector by $\mathbf{1}$ and the standard basis vectors by $e_0,\dots,e_N$. A real square matrix $A$ is a nonsingular $M$-matrix if its off-diagonal entries are nonpositive, it is nonsingular, and $A^{-1}\geq0$ entrywise. Equivalently, all principal minors of $A$ are positive; see~\cite[Chapter~6, Theorem~2.3]{BermanPlemmons94}. The (negated) PEP multiplier blocks $-\Lambda$ we consider often possess this structure but may lie on this set's singular boundary. The following proposition gives a useful triangular factorization covering these potentially singular matrices. The subsequent lemma supplies inverse-entry comparisons, again without requiring the whole matrix to be nonsingular. Proofs of these two results follow from standard linear algebra, but are given in Appendix~\ref{app:LA} for completeness.

\begin{proposition}
\label{prop:M-matrix-factorization}
Let $A$ have nonpositive off-diagonal entries, with
$A\mathbf1\geq0$ and $\mathbf1^\top A\geq0$. Then $A$ admits a factorization
\[
A=TC,
\]
where $T$ is upper triangular and $C$ is unit lower triangular, with
\[
T_{i,i}\geq0,\qquad
T_{i,j}\leq0 \quad \forall i < j,\qquad
C_{i,j}\leq0 \quad \forall i > j.
\]
Moreover, $C^{-1}\geq0$ and $\mathbf1^\top T\geq0$ entrywise. If $A$ is nonsingular, then the factorization is unique with
\[
T_{i,i}>0,
\qquad
T^{-1}\geq0,
\qquad
C^{-1}\geq0.
\]
If $A\mathbf1=e_0$, the factorization may be chosen so that $C\mathbf1=e_0$ and $T_{0,0}=1$. If, instead, $\mathbf1^\top A=e_0^\top$, then they may be chosen such that $\mathbf1^\top T=e_0^\top$ and $T_{0,0}=1$.
\end{proposition}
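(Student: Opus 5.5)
We would prove Proposition~\ref{prop:M-matrix-factorization} by a UL-type Gaussian elimination that runs from the last index down to the first. Write $A=TC$ with $T$ upper triangular and $C$ unit lower triangular. This is equivalent to eliminating the last row/column first: take the Schur complement onto indices $0,\dots,N-1$, and recurse. The key invariant is the class
\[
\mathcal K=\{A:\ A_{ij}\le0\ (i\ne j),\ A\mathbf1\ge0,\ \mathbf1^\top A\ge0\},
\]
and we will show it is closed under Schur complements.

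\textbf{Step 1 (pivot positive).} Let $a=A_{N,N}$. Since the row sums are nonnegative and the off-diagonals are nonpositive, $a\ge \sum_{j\ne N}|A_{N,j}|\ge0$.
If $a>0$, set $C_{N,j}=A_{N,j}/a\le0$ and $T_{i,N}=A_{i,N}$. In particular $T_{N,N}=a$ and $T_{i,N}\le0$ for $i<N$. Then form the Schur complement
\[
S=A_{[0:N-1]}-A_{[0:N-1],N}A_{N,[0:N-1]}/a .
\]
If $a=0$, then row $N$ of $A$ vanishes identically. Column $N$ also vanishes, because the column sum of column $N$ is $\sum_i A_{i,N}\ge0$ with every off-diagonal term $\le0$. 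In that case set $C_{N,\cdot}=e_N^\top$ and $T_{\cdot,N}=0$, and take $S=A_{[0:N-1]}$.

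\textbf{Step 2 (Schur complement stays in $\mathcal K$).} The off-diagonal entries of $S$ remain $\le0$, since we subtract products of two nonpositive numbers divided by $a>0$.
For the row sums, write $A_{i,\cdot}\mathbf1=r_i\ge0$. Then
\[
(S\mathbf1)_i=r_i-A_{i,N}-A_{i,N}\,\frac{r_N-a}{a}=r_i-A_{i,N}\,\frac{r_N}{a}\ge r_i\ge0,
\]
using $A_{i,N}\le0$. Column sums follow by the symmetric computation.

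\textbf{Step 3 (induction and sign facts).} Induction on size then produces the whole factorization with the stated signs.
The bound $C^{-1}\ge0$ holds because $C=I-E$ with $E$ strictly lower triangular, nilpotent, and entrywise $\ge0$. Hence $C^{-1}=\sum_k E^k\ge0$.
For $\mathbf1^\top T\ge0$, note that the column sums of $T$ in column $N$ equal the column sums of $A$ in column $N$. The remaining columns of $T$ are the $T$-factor of $S$, whose column sums are $\ge0$ by induction.

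\textbf{Step 4 (nonsingular case).} If $A$ is nonsingular, all pivots are nonzero: $\det A=\prod T_{ii}$. Each pivot is $\ge0$, so each $T_{ii}>0$.
Uniqueness follows from uniqueness of the LU-type factorization with nonzero pivots and unit-diagonal $C$.
For $T^{-1}\ge0$, write $T=D(I-U)$ with $D>0$ diagonal and $U\ge0$ strictly upper triangular. The Neumann series argument from Step 3 then applies.

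\textbf{Step 5 (normalizations).} Suppose $A\mathbf1=e_0$. Elimination preserves the row-sum vector on the remaining indices: Step 2 shows $(S\mathbf1)_i=r_i-A_{i,N}r_N/a$, and here $r_N=0$ whenever $N\ge1$.
So, inductively, $C\mathbf1$ equals the normalized row sums $r_i/T_{ii}$ at the moment index $i$ is pivoted. These are zero for $i\ge1$. At the final step the $1\times1$ block has value $r_0=1$, so $T_{00}=1$ and $C\mathbf1=e_0$.
The zero-pivot branch is consistent with this, since a zero row has zero sum. The case $\mathbf1^\top A=e_0^\top$ is symmetric, applied to column sums of $T$.

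\textbf{Main obstacle.} The hard part is the singular case. We must handle zero pivots consistently: the degenerate branch has to keep the invariants and the normalization, and we must check that the choice of $C$ there does not break $C\mathbf1=e_0$.
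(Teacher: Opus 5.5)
\textbf{Gap in Step 5.} Your elimination scheme matches the paper's: pivot on the last index, show the class $\mathcal K$ is closed under the Schur complement, and use Neumann series for $C^{-1}$ and $T^{-1}$. Steps 1--4 are correct. The problem is at the point you flagged as the main obstacle, and your resolution of it in Step 5 is wrong.

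In the zero-pivot branch you set $C_{N,\cdot}=e_N^\top$, which gives $(C\mathbf1)_N=1$, not $0$. The remark that ``a zero row has zero sum'' is about the zero row of $A$. What enters $C\mathbf1$ is the row of $C$, and you chose that row to be $e_N^\top$. Likewise, your formula $(C\mathbf1)_i=r_i/T_{i,i}$ is $0/0$ at a zero pivot and does not describe what your construction produces.

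A concrete failure: take $A=\operatorname{diag}(1,0)$. It satisfies all hypotheses with $A\mathbf1=e_0$. Your procedure returns $T=\operatorname{diag}(1,0)$ and $C=I$, so $C\mathbf1=\mathbf1\neq e_0$. This case matters later: Lemma~\ref{lem:ogmg-TC} needs $C\mathbf1=e_0$ for every feasible dual multiplier, including singular ones. Lemma~\ref{lem:ogmg-TC-projection} then uses it to get $q_0=1$ and $D_0\geq1$.

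\textbf{Repair.} Use the paper's choice. When the pivot at an index $i\geq1$ is zero, column $i$ of $T$ vanishes, so the subdiagonal part of row $i$ of $C$ does not affect $TC$ and can be chosen freely. Pick nonpositive entries summing to $-1$, for example $C_{i,0}=-1$. This choice:
\begin{itemize}
\item keeps $C$ unit lower triangular with nonpositive subdiagonal entries, so $C^{-1}\geq0$ still holds;
\item preserves $A=TC$;
\item makes $(C\mathbf1)_i=0$.
\end{itemize}
Index $0$ is never a zero pivot here, because the surviving row-sum vector stays $e_0$. So your final-block argument for $T_{0,0}=1$ goes through; alternatively, $Te_0=TC\mathbf1=A\mathbf1=e_0$ gives it directly.

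The column-sum normalization needs no special choice, since $\mathbf1^\top T=\mathbf1^\top AC^{-1}=e_0^\top C^{-1}=e_0^\top$ for any such factorization. The same identity gives a one-line alternative to your columnwise induction for $\mathbf1^\top T\geq0$.
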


\begin{lemma}\label{lem:app-inverse-bound}
Let $A$ have nonpositive off-diagonal entries, with
$A\mathbf1\geq0$ and $\mathbf1^\top A\geq0$. Whenever
\[
Au=e_i,
\qquad
Av=e_j
\]
are solvable, every solution has $0\leq u_j\leq\min\{u_i,v_j\}$.
The same statement holds for every principal submatrix of $A$. In addition, $\operatorname{range}(A)=\operatorname{range}(A^\top)$.
\end{lemma}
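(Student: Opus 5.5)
The statement to prove is Lemma~\ref{lem:app-inverse-bound}. The plan is to reduce it to weak diagonal dominance through a diagonal scaling, and then use a maximum principle. Throughout we assume $A\mathbf1\geq0$, $\mathbf1^\top A\geq0$ and $A_{k,l}\le0$ for $k\neq l$.

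\textbf{Range identity.} First prove $\operatorname{range}(A)=\operatorname{range}(A^\top)$ by showing $\ker A=\ker A^\top$. Suppose $Ax=0$, and let $S=\{k: x_k=\max_l x_l\}$.
\begin{itemize}
\item For $k\in S$, write $0=(Ax)_k=(A\mathbf1)_k x_k+\sum_{l}A_{k,l}(x_l-x_k)$. Every term in the sum is $\ge0$, since $A_{k,l}\le0$ and $x_l-x_k\le0$.
\item So each term vanishes, provided the first term is nonnegative. Apply the argument to $\pm x$ and shift by constants when $A\mathbf1=0$ on those rows.
\item A cleaner route is the following. The matrix $B=A+A^\top$ is symmetric with nonpositive off-diagonal entries and $B\mathbf1\ge0$. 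It is therefore weakly diagonally dominant, hence PSD.
\item Then $x^\top Ax=\tfrac12 x^\top Bx=0$ gives $Bx=0$, so $A^\top x=Bx-Ax=0$.
\end{itemize}
By symmetry the reverse inclusion also holds, giving $\ker A=\ker A^\top$ and hence the range identity. This is short and I would do it first.

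\textbf{Inverse-entry bounds.} Let $Au=e_i$.
\begin{itemize}
\item \emph{Nonnegativity $u\ge0$.} Take $k$ minimizing $u_k$ and suppose $u_k<0$. Let $S$ be the set of indices attaining the minimum. For $k\in S$ with $k\ne i$, $0=(Au)_k=(A\mathbf1)_k u_k+\sum_l A_{k,l}(u_l-u_k)$. Here $(A\mathbf1)_k u_k\le0$ and each $A_{k,l}(u_l-u_k)\le0$, so all these terms vanish. Thus rows in $S\setminus\{i\}$ have zero row sum and no coupling to $S^c$.
\item \emph{Closing the nonnegativity argument.} Sum the equations over $S$: $\sum_{k\in S}(Au)_k=\mathbf1_S^\top Au$. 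Using $\mathbf1^\top A\ge0$ through the column-sum identity, write $\mathbf1_S^\top A u = u_{\min}\,\mathbf1_S^\top A\mathbf1_S+\mathbf1_S^\top A(u-u_{\min}\mathbf1)$. The left side is $[i\in S]\ge0$, while the right side is $\le0$. This forces degenerate equalities, and I would then shift $u$ by a kernel vector to get a nonnegative solution. The catch is that the claim says \emph{every} solution, so the shift is not available. The real argument must instead show that the degenerate case forces $u_k=u_{\min}\ge0$ through column sums. This is delicate, and I expect it to be the main obstacle. I would first attempt it via the identity $\ker A=\ker A^\top$ together with a positive-kernel-vector structure on irreducible singular blocks.
\item \emph{Bound $u_j\le u_i$.} Apply the same maximum principle. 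At the maximizer $k$ of $u$, any $k\ne i$ gives $(Au)_k=0$, which forces the maximum to propagate. So the maximum is attained at $i$, and $u_j\le u_i$.
\item \emph{Bound $u_j\le v_j$.} Use transposition: $u_j=e_j^\top u=(A^\top w)^\top u$, where $A^\top w=e_j$. This is solvable because of the range identity, since $e_j\in\operatorname{range}(A)$. Then $u_j=w^\top e_i=w_i$. Applying the max-principle bound to $A^\top$, which satisfies the same hypotheses, gives $w_i\le w_j$. Finally relate $w_j=e_j^\top w$ to $v_j$ by the same transposition trick, $w_j=v^\top A^\top w=v_j$. This uses that the quantities are well defined modulo the common kernel, which holds by the range identity.
\end{itemize}

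\textbf{Principal submatrices.} A principal submatrix inherits nonpositive off-diagonal entries. Its row and column sums only increase when off-diagonal entries are dropped, so it still satisfies the hypotheses. The statement for principal submatrices therefore follows from the same argument.
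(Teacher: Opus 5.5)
Your range identity ($A+A^\top$ is positive semidefinite, so $\ker A=\ker A^\top$) is correct. So are the transposition identities $u_j=w^\top Au=w_i$ and $w_j=v^\top A^\top w=v_j$, and the remark on principal submatrices. The gap is in the two extremum-principle steps, which you state for the whole vector: that every solution of $Au=e_i$ is nonnegative, and that its maximum is attained at $i$. Both are false once $A$ is singular. The matrix
\[
A=\begin{pmatrix}1&0&0\\0&1&-1\\0&-1&1\end{pmatrix}
\]
(indices $0,1,2$) satisfies every hypothesis, and $u=(1,c,c)^\top$ solves $Au=e_0$ for every $c$. Taking $c<0$ breaks nonnegativity, and $c>1$ puts the maximum away from $i=0$, with the ``propagation'' confined to $\{1,2\}$. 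So the resolution you propose for your main obstacle (showing via column sums that the degenerate case forces $u_{\min}\ge0$) aims at a false statement. The maximum-principle bullet is likewise unproved; it also needs $\max u\ge0$ to sign $(A\mathbf1)_k\max u$. The lemma only asserts bounds at the coordinates $i$ and $j$. What excludes such examples is the hypothesis that $Av=e_j$ is solvable, which your first two bullets never use; here $e_1,e_2\notin\operatorname{range}(A)$.

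The paper handles this by splitting $A$, along the connected components of the graph of $\operatorname{sym}(A)$, into a direct sum of nonsingular $M$-matrix blocks and singular blocks with zero row and column sums. Solvability places $i$ and $j$ in nonsingular blocks, where the relevant coordinates of $u$ are unique. The extremum argument then closes because $A_{S,S}\mathbf1_S=0$ cannot hold for a principal submatrix of a nonsingular $M$-matrix.

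You can also finish with your own tools. Since $e_i,e_j\in\operatorname{range}(A^\top)=(\ker A)^\perp$, every kernel vector vanishes at $i$ and $j$. Hence $u_i$ and $u_j$ are the same for all solutions, and shifting \emph{is} available, contrary to your remark. Consider the case to be excluded in either extremum argument ($u_{\min}<0$, or $\max u>u_i$ once $u\ge0$), with $S$ the extremal index set. There you get $A_{S,S^c}=0$ and $A_{S,S}\mathbf1_S=0$. Then $\mathbf1_S^\top A_{S,S}=(\mathbf1^\top A)_S-\mathbf1_{S^c}^\top A_{S^c,S}\ge0$ with total $\mathbf1_S^\top A_{S,S}\mathbf1_S=0$, which forces $A_{S^c,S}=0$, hence $A\mathbf1_S=0$ and $\mathbf1_S^\top A=0$. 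Thus $i,j\notin S$. Finitely many shifts by multiples of such $\mathbf1_S$ then produce a solution with $u\ge0$ and $\max u=u_i$, leaving $u_i,u_j$ unchanged. This gives $0\le u_j\le u_i$ for every solution, and applying the same to $A^\top$ in your transposition argument gives $u_j\le v_j$. (Minor: your summed identity should contain $u_{\min}\mathbf1_S^\top A\mathbf1$, not $u_{\min}\mathbf1_S^\top A\mathbf1_S$, and its sign comes from row sums, not column sums.)
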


\subsection{Related Work}

\paragraph{Optimized gradient methods.}
OGM originated from the PEP computations
of \cite{DT14}; its efficient analytic form and convergence proof were
developed by \cite{KF16}.
Subsequent work clarified the method's mechanism. Analyses using Lyapunov functions and
linear coupling \cite{park2021factor,d2021acceleration}
explain the factor improvement of OGM over the classical
accelerated gradient method without relying on a computer-assisted
certificate.

For the gradient-norm criterion, Nesterov~\cite{nesterov2012make} initiated the study of accelerated convergence guarantees in terms of gradient norms. Eventually, Kim and Fessler~\cite{KF21} designed OGM-G, the gradient-norm (relative to initial suboptimality) counterpart of OGM, by optimizing PEP certificates for the squared gradient norm. The geometric framework of~\cite{lee2021geometric} gave a Lyapunov-style view of OGM-G. It also introduced a variant of the fast iterative shrinkage-thresholding algorithm (FISTA)~\cite{beck2009fast}, called FISTA-G, for reducing the squared norm of the proximal gradient mapping, along with related methods. 
For the squared gradient norm (relative to initial squared distance to a minimizer), it was observed in~\cite{KF21} that running OGM followed by OGM-G provided a fast $O(1/N^4)$ rate, but did not appear to be exactly numerically optimal. Recently,~\cite{KimRyuDasGupta26} designed and analyzed Lemniscate acceleration as an optimized method for this setting. Before this work, exact minimax optimality had been established for OGM, while the corresponding claims for OGM-G and Lemniscate remained conjectural. Our work resolves these missing lower bounds and establishes uniqueness among fixed-step first-order methods in each case.

We remark that optimal methods have been designed for many settings beyond the smooth convex problems considered here. For example, the information-theoretic exact method (ITEM)~\cite{taylordrori2022} and ITEM-f method~\cite{KimRyuDasGupta26} are tailored to smooth, strongly convex settings, the OptISTA method~\cite{jang2025computer} applies to composite settings, and the Prox-ITEM method~\cite{upadhyaya2026optimalfirstordermethodsmooth} applies to strongly convex composite settings. Whether similar results to those developed here can provide uniqueness guarantees (or demonstrate multiple solutions exist) for these wider settings is left as an interesting future direction.

A general constructive technique for analytically identifying such optimal methods was provided by Drori and Taylor~\cite{DT20}. A general numerical approach was developed by~\cite{DGVR24} via branch-and-bound solutions to pure minimization formulations like~\eqref{eq:disttosubopt-form2}.
Our proofs share some core structures with these prior general approaches. We begin with the same minimization forms like~\eqref{eq:disttosubopt-form2}, but then analytically reduce them to a form simple enough to be solved exactly, from which uniqueness follows as well. This strategy applies equally to resolve the optimality and uniqueness of OGM, OGM-G, and Lemniscate.

\paragraph{H-duality and related dualities.}
\cite{KOPR23} introduced H-duality as a correspondence between methods
for reducing function value and methods for reducing gradient norm.
In that theory, OGM and OGM-G form the central example: the H-dual
operation transports a Lyapunov proof for one performance criterion to a
certificate for the other. Shu and Wang~\cite{Shu2026Hduality} provided general explanations for this phenomenon and a mapping between generic PEP proofs under this operation (subject to the verification that sign conditions are preserved).

The mirror-duality framework of
\cite{KPDOR24} extends the function-value/gradient-norm symmetry to
methods based on mirror descent and Banach-space geometries. H-duality has
also appeared beyond smooth convex minimization: \cite{YKSR24} use it
to exhibit nonunique optimal acceleration mechanisms for minimax and
fixed-point problems, and \cite{YG26} study the H-dual operation within
a combinatorial theory of extremal optimal fixed-point algorithms.
Here a similar mirrored phenomenon occurs. Our analysis of the algorithm design problems for both suboptimality and squared gradient norm reduces to the same core problem, up to a mirroring in index ordering.

\paragraph{Neighboring exact-optimality questions.}
Recent works characterizing exactly optimal methods have emphasized that minimax optimality and uniqueness are separate questions. For the design of methods for nonexpansive fixed-point problems, the H-invariance theory of
\cite{YRG25} and the fixed-point duality theory of \cite{YG26}
identified the exact set of optimal methods and structured relationships among them. For nonsmooth Lipschitz
convex minimization, \cite{ZG26} similarly gave a complete characterization of
all minimax optimal fixed-step subgradient methods. One can view our results as providing a complete characterization of the set of optimal methods for three settings of smooth convex optimization. In these cases, the sets of optimal methods are singletons.

Beyond our pursuit of optimal fixed-step first-order methods, algorithm design can target stronger criteria. The work~\cite{GSW26} constructed a Subgame Perfect Gradient Method (SPGM) that matches OGM's
static minimax rate but optimally improves its guarantees after
informative oracle observations. The possible uniqueness of such adaptively optimal policies is interesting but beyond our scope here. 

\section{Uniqueness and Optimality of OGM}\label{sec:ogm}

Recall our notation that $e_i$ denotes the $i$th coordinate vector,
$\mathbf 1$ denotes the all-ones vector, and
$\operatorname{sym}(A)=(A+A^\top)/2$. All matrices are indexed by
$0,\dots,N$.

For a given budget of steps $N$, the OGM method is parameterized by a scalar sequence $\theta_n$ defined by first setting $\theta_0=1$. Then for $n=1,\dots,N-1$ and $n=N$, set $\theta_n$ as the positive root of the following quadratic equations, respectively,
\begin{equation}\label{eq:ogm-rate-recursion}
\theta_n^2-\theta_n-\theta_{n-1}^2=0,\qquad 
\theta_N^2-\theta_N-2\theta_{N-1}^2=0.
\end{equation}
The OGM algorithm can then be defined by the unit lower triangular fixed-step matrix $W_\mathtt{OGM}$
\begin{equation}\label{eq:ogm-W}
(W_\mathtt{OGM})_{n,n}=1,
\qquad
(W_\mathtt{OGM})_{n,i}=
\begin{cases}
\frac{\theta_i^2}{\theta_n^2}
+2\theta_i
\left(1-\frac{\theta_i^2}{\theta_n^2}\right),
&0\leq i<n<N,\\
\frac{2\theta_i^2}{\theta_N^2}
+2\theta_i
\left(1-\frac{2\theta_i^2}{\theta_N^2}\right),
&0\leq i<n=N.
\end{cases}
\end{equation}
In addition to this explicit matrix form, one can equivalently describe OGM recursively\footnote{The OGM algorithm can equally be defined as updating its scalar sequence by the recurrence, $n=1,\dots,N-1$,
\[
\theta_n=\frac{1+\sqrt{1+4\theta_{n-1}^2}}{2},
\qquad
\theta_N=\frac{1+\sqrt{1+8\theta_{N-1}^2}}{2}
\]
and maintaining an auxiliary iterate sequence, initialized $z_0=x_0$ and updating for $n=1,\dots,N$
\begin{equation*}
\begin{aligned}
z_{n}=z_{n-1}-\frac{2\theta_{n-1}}{L}g_{n-1}, \qquad
x_{n}
=\begin{cases}
\frac{\theta_{n-1}^2}{\theta_n^2}\left(x_{n-1} - \frac{1}{L}g_{n-1}\right)
+\left(1-\frac{\theta_{n-1}^2}{\theta_n^2}\right)z_{n}
&\text{if }n<N\\
\frac{2\theta_{N-1}^2}{\theta_N^2}\left(x_{N-1} - \frac{1}{L}g_{N-1}\right)
+\left(1-\frac{2\theta_{N-1}^2}{\theta_N^2}\right)z_N
& \text{if }n=N.
\end{cases}
\end{aligned}
\end{equation*}
}.
Kim and Fessler~\cite{KF16} showed that the OGM algorithm possesses a final suboptimality guarantee of $f_N-f_\star\leq 1/\theta_N^2$. We denote this optimal rate by $r_\mathtt{OGM}=1/\theta_N^2$.

We note that this sequence is also related to the following simple optimization problem over the simplex $\Delta_{N+1} :=\{\delta\in\mathbb{R}^{N+1}_+:\mathbf 1^\top\delta=1\}$ with the
convention $0/0=0$. In particular, the sequence $(\theta_i)_{i=0}^N$ determines its unique solution. This fact will play a central role in the final steps of our analysis of both OGM and OGM-G.

\begin{lemma}[The Simplex Problem]\label{lem:ogm-simplex-prob}
For every $N\geq1$, the optimization problem
\begin{equation*}
\begin{aligned}
\min_{\delta\in\Delta_{N+1}}\quad
&\Phi(\delta) := \max\left\{
\frac{\delta_i^2}{2\sum_{j=0}^i\delta_j}:0\leq i<N,
\ \delta_N^2
\right\}
\end{aligned}
\end{equation*}
has optimal value $r_\mathtt{OGM}$, attained uniquely at $\delta^\star$ defined for $i=0,\dots,N-1$ and $i=N$ as
\begin{equation}\label{eq:ogm-chain-increments}
    \delta_i^\star=2r_\mathtt{OGM}\theta_i,
    \qquad
    \delta_N^\star=\sqrt{r_\mathtt{OGM}}.
\end{equation}
Moreover, at $\delta^\star$, every term in the finite maximum defining $\Phi(\delta^\star)$ equals $r_\mathtt{OGM}$.
\end{lemma}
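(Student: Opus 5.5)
The plan is to prove a lower bound $\Phi(\delta)\geq r_\mathtt{OGM}$ for every feasible $\delta$ by controlling the partial sums $S_i:=\sum_{j=0}^i\delta_j$. I then verify that $\delta^\star$ attains this bound with every term equal to $r_\mathtt{OGM}$, and finally run the equality cases backwards to get uniqueness. Two identities drive everything, and both follow from~\eqref{eq:ogm-rate-recursion} with $\theta_0=1$. The first is the telescoping identity $\sum_{j=0}^i\theta_j=\theta_i^2$ for $i\leq N-1$, which holds since $\theta_j^2-\theta_{j-1}^2=\theta_j$. The second is $2\theta_{N-1}^2/\theta_N^2+1/\theta_N=1$.

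\emph{Attainment.} With $\delta_i^\star=2r\theta_i$ and $r=r_\mathtt{OGM}$, the first identity gives $S_i^\star=2r\theta_i^2$ for $i<N$. Hence $(\delta_i^\star)^2/(2S_i^\star)=4r^2\theta_i^2/(4r\theta_i^2)=r$. Also $(\delta_N^\star)^2=r$. By the second identity, $\mathbf 1^\top\delta^\star=2r\theta_{N-1}^2+\sqrt r=1$, so $\delta^\star\in\Delta_{N+1}$. All coordinates are nonnegative, so every term of the maximum equals $r$.

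\emph{Lower bound.} Suppose $\Phi(\delta)\leq t$ for some $t>0$. I claim $S_i\leq 2t\theta_i^2$ for $0\leq i\leq N-1$, and prove it by induction. Each constraint reads $S_i-S_{i-1}=\delta_i\leq\sqrt{2tS_i}$, with $S_{-1}=0$; this also covers the $0/0$ convention, since $\delta_i=0$ satisfies it trivially. Set $h(x)=x-\sqrt{2tx}$, which is increasing for $x\geq t/2$. Then $S_i$ lies below the largest root of $h(x)=S_{i-1}$. Since $h$ is increasing beyond $t/2$, the induction hypothesis gives $S_i\leq$ the largest root of $h(x)=2t\theta_{i-1}^2$. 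Substituting $x=2ty^2$ turns this into $y^2-y=\theta_{i-1}^2$, whose positive root is $y=\theta_i$, so $S_i\leq 2t\theta_i^2$. For the base case $i=0$, the constraint gives $\delta_0\leq 2t=2t\theta_0^2$. Combining with $\delta_N\leq\sqrt t$ gives $1=S_{N-1}+\delta_N\leq 2t\theta_{N-1}^2+\sqrt t$. The right side is strictly increasing in $t$ and equals $1$ at $t=r$, so $t\geq r$. This proves that the optimal value is $r_\mathtt{OGM}$.

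\emph{Uniqueness.} This is the step needing the most care, because equality must propagate through the induction in reverse. If $\Phi(\delta)=r$, the final chain forces $\delta_N=\sqrt r$ and $S_{N-1}=2r\theta_{N-1}^2$. Now suppose $S_i=2r\theta_i^2$ for some $i\geq1$. The forward bound $S_{i-1}\leq 2r\theta_{i-1}^2$ gives $\delta_i\geq 2r(\theta_i^2-\theta_{i-1}^2)=2r\theta_i$. The constraint gives $\delta_i\leq\sqrt{2rS_i}=2r\theta_i$. Hence $\delta_i=2r\theta_i$ and $S_{i-1}=2r\theta_{i-1}^2$. Descending to $i=0$ yields $\delta=\delta^\star$.

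The main obstacle is keeping the monotonicity argument for $h$ rigorous. It must hold even for small $S_{i-1}$, including $S_{i-1}=0$, where the root structure of $h(x)=S_{i-1}$ should be checked directly. The relevant fact is that the feasible set $\{x\geq0: h(x)\leq c\}$ is an interval $[0,x_+(c)]$ with $x_+$ increasing in $c$.
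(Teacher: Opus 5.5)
Your proposal is correct and follows essentially the same route as the paper. Both arguments use the inductive envelope $S_i\le 2\Phi(\delta)\theta_i^2$ obtained from the quadratic $z^2-z\le\theta_{i-1}^2$, the terminal bound $1\le 2\Phi\theta_{N-1}^2+\sqrt{\Phi}$, direct verification at $\delta^\star$, and backward induction on the partial sums for uniqueness. The monotonicity issue you flag does not arise: you can chain $h(S_i)\le S_{i-1}\le 2t\theta_{i-1}^2$ first and then solve the quadratic once, which is exactly what the paper does.
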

\begin{proof}
Consider some $\delta\in \Delta_{N+1}$. Then by simple rearrangement of the definition of $\Phi(\delta)$, for every
$i<N$, one has that $\delta_i \leq\sqrt{2 \Phi(\delta) \sum_{j=0}^i\delta_j}$.
We claim inductively that
\begin{equation}\label{eq:ogm-chain-envelope}
    \sum_{j=0}^i\delta_j\leq2\Phi(\delta)\theta_i^2
    \qquad \forall 0\leq i<N.
\end{equation}
For $i=0$, the constraint $\delta_0^2/(2\delta_0)\leq\Phi(\delta)$ gives
$\delta_0\leq2\Phi(\delta)$. Suppose the claim holds at $i-1$. Since
$\delta_i=\sum_{j=0}^i\delta_j-\sum_{j=0}^{i-1}\delta_j$, we have
\[
    \sum_{j=0}^i\delta_j
    -\sqrt{2\Phi(\delta)\sum_{j=0}^i\delta_j}
    \leq2\Phi(\delta)\theta_{i-1}^2.
\]
Writing
$z=\sqrt{(\sum_{j=0}^i\delta_j)/(2\Phi(\delta))}$, this becomes
$z^2-z\leq\theta_{i-1}^2$. The first quadratic in~\eqref{eq:ogm-rate-recursion} shows that its positive root is $\theta_i$. This proves
\eqref{eq:ogm-chain-envelope}. Similarly, the terminal constraint gives $\delta_N\leq\sqrt{\Phi(\delta)}$. Hence
\[
    1
    =\sum_{j=0}^{N-1}\delta_j+\delta_N
    \leq2\Phi(\delta)\theta_{N-1}^2+\sqrt{\Phi(\delta)}.
\]
The right side is strictly increasing in $\Phi(\delta)$, and the terminal equation
in~\eqref{eq:ogm-rate-recursion} together with $r_\mathtt{OGM}=1/\theta_N^2$ gives
$1=2r_\mathtt{OGM}\theta_{N-1}^2+\sqrt{r_\mathtt{OGM}}$. Therefore $\Phi(\delta)\geq r_\mathtt{OGM}$.
For the point in~\eqref{eq:ogm-chain-increments}, the ordinary recurrence gives $\sum_{j=0}^i\delta_j^\star=2r_\mathtt{OGM}\theta_i^2$ for each $i<N$ and the terminal recurrence gives
$\sum_{j=0}^N\delta_j^\star=1$. Consequently, for $i=0,\dots,N-1$ and $i=N$, we have
\[
    \frac{(\delta_i^\star)^2}
    {2\sum_{j=0}^i\delta_j^\star}
    =r_\mathtt{OGM},
    \qquad
    (\delta_N^\star)^2=r_\mathtt{OGM}.
\]
Thus the lower bound is attained.
Finally, suppose a feasible $\delta$ attains $\Phi(\delta)=r_\mathtt{OGM}$. Equality in the
terminal bound forces
$\delta_N=\sqrt{r_\mathtt{OGM}}$ and
$\sum_{j=0}^{N-1}\delta_j=2r_\mathtt{OGM}\theta_{N-1}^2$. If
$\sum_{j=0}^i\delta_j=2r_\mathtt{OGM}\theta_i^2$, then
\[
    \sum_{j=0}^{i-1}\delta_j
    \geq
    2r_\mathtt{OGM}\theta_i^2-2r_\mathtt{OGM}\theta_i
    =2r_\mathtt{OGM}\theta_{i-1}^2,
\]
while~\eqref{eq:ogm-chain-envelope} gives the reverse inequality.
Backward induction therefore fixes every partial sum and hence every
coordinate of $\delta$, proving uniqueness.
\end{proof}

\subsection{A Direct Triangular Reformulation}

As a key conceptual step in our analysis, we rewrite the negated matrix $-\Lambda$ as a product of two triangular matrices, $TC$. Note that the factorization in Proposition~\ref{prop:M-matrix-factorization} applies for any feasible matrix, even when $\Lambda$ is singular. Doing so gives the following reformulation.

\begin{lemma}[The $TC$ reformulation]\label{lem:ogm-TC}
The value in~\eqref{eq:disttosubopt-design} is
\begin{equation}\label{eq:ogm-TC-problem}
\begin{aligned}
 r_{D\rightarrow F}
 =\inf_{W,T,C,r}\quad &r\\
 \mathrm{s.t.}\quad
 &W\text{ is unit lower triangular},\\
 &T\text{ is upper triangular},\qquad T_{i,i}\geq0,\qquad T_{i,j}\leq0 \quad\forall i<j,\\
 &C\text{ is unit lower triangular},\qquad C_{i,j}\leq0\quad \forall i>j,\\
 &(TC)_{i,j}\leq0\quad \forall i\neq j,\qquad TC\mathbf 1\geq e_N,
 \qquad C^\top T^\top\mathbf 1\geq0,\\
 &\begin{pmatrix}
       \frac r2 &-\frac12\mathbf 1^\top TC\\[1mm]
       -\frac12C^\top T^\top\mathbf 1
       &\operatorname{sym}(W^\top TC)-\frac12e_Ne_N^\top
   \end{pmatrix}\succeq0, \qquad r>0.
\end{aligned}
\end{equation}
For each fixed $W$, factoring $-\Lambda=TC$ identifies each feasible dual PEP solution with one for the above reformulation.
\end{lemma}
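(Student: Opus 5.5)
The plan is to start from the design reformulation~\eqref{eq:disttosubopt-form2}, which already equals $r_{D\rightarrow F}$ by Propositions~\ref{prop:distance-dual} and~\ref{prop:distance-duality}. It then suffices to exhibit a value-preserving correspondence between feasible points $(W,\Lambda,r)$ of~\eqref{eq:disttosubopt-form2} and feasible points $(W,T,C,r)$ of~\eqref{eq:ogm-TC-problem}, with the same $W$ and $r$. Under the substitution $A=-\Lambda$, the constraints translate directly. $\Lambda_{i,j}\ge 0$ for $i\ne j$ becomes $A_{i,j}\le 0$. $\Lambda\mathbf1\le -e_N$ becomes $A\mathbf1\ge e_N$, and $\Lambda^\top\mathbf1\le0$ becomes $A^\top\mathbf1\ge0$. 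In the semidefinite block, $\frac12\mathbf1^\top\Lambda=-\frac12\mathbf1^\top A$, and $-\operatorname{sym}(\Lambda^\top W)=\operatorname{sym}(A^\top W)=\operatorname{sym}(W^\top A)$, where the last equality holds because $\operatorname{sym}(M)=\operatorname{sym}(M^\top)$.

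For the direction from~\eqref{eq:disttosubopt-form2} to~\eqref{eq:ogm-TC-problem}, take a feasible $(W,\Lambda,r)$ and set $A=-\Lambda$. Then $A$ has nonpositive off-diagonal entries, $A\mathbf1\ge e_N\ge0$, and $\mathbf1^\top A\ge 0$. These are exactly the hypotheses of Proposition~\ref{prop:M-matrix-factorization}, which supplies $A=TC$ with $T$ upper triangular, $T_{i,i}\ge0$, $T_{i,j}\le0$ for $i<j$, and $C$ unit lower triangular with $C_{i,j}\le0$ for $i>j$. Substituting $TC$ for $A$ in the translated constraints gives precisely the remaining constraints of~\eqref{eq:ogm-TC-problem}: $(TC)_{i,j}\le0$ off the diagonal, $TC\mathbf1\ge e_N$, $C^\top T^\top\mathbf1\ge0$, and the displayed PSD block. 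Feasibility in~\eqref{eq:disttosubopt-form2} also forces $r\ge0$, and the condition $r>0$ is carried over directly. This identifies each feasible dual PEP solution with a feasible point of~\eqref{eq:ogm-TC-problem} having the same objective, which is the final sentence of the lemma.

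For the converse, take any feasible $(W,T,C,r)$ of~\eqref{eq:ogm-TC-problem} and define $\Lambda=-TC$. The constraints $(TC)_{i,j}\le0$, $TC\mathbf1\ge e_N$, and $C^\top T^\top\mathbf1\ge0$ translate back into the sign and row/column-sum constraints on $\Lambda$, and the PSD block is identical. So $(W,\Lambda,r)$ is feasible in~\eqref{eq:disttosubopt-form2}. In this direction the sign constraints on $T$ and $C$ are never used, so no information is lost. The two feasible sets therefore have the same set of attainable $(W,r)$ pairs, and their infima coincide.

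The only genuinely nontrivial ingredient is the existence of a triangular factorization when $\Lambda$ may be singular, and that is exactly what Proposition~\ref{prop:M-matrix-factorization} provides. Beyond that, the main care point is the bookkeeping in the PSD block: getting the signs and transposes right in $\operatorname{sym}(\Lambda^\top W)$ versus $\operatorname{sym}(W^\top TC)$, and making sure the off-diagonal vector becomes $-\frac12 C^\top T^\top\mathbf1$ and not its negation. I would verify this block entry by entry.
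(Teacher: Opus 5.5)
Your proposal is correct and follows essentially the same route as the paper: set $A=-\Lambda$, verify the hypotheses of Proposition~\ref{prop:M-matrix-factorization} to obtain $A=TC$, substitute to get the constraints of~\eqref{eq:ogm-TC-problem}, and for the converse set $\Lambda=-TC$. Your entry-by-entry check of the semidefinite block, using $\operatorname{sym}(C^\top T^\top W)=\operatorname{sym}(W^\top TC)$, is exactly the bookkeeping the paper's proof leaves implicit.
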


\begin{proof}
Let $(W,\Lambda,r)$ be feasible in~\eqref{eq:disttosubopt-form2}. Then
$A=-\Lambda$ has nonpositive off-diagonal entries, with
\[
    A\mathbf1\geq e_N,
    \qquad
    \mathbf1^\top A\geq0.
\]
Proposition~\ref{prop:M-matrix-factorization} gives a factorization $A=TC$
with the stated triangular forms and signs. Substituting $\Lambda=-TC$ into
the scalar and semidefinite constraints of~\eqref{eq:disttosubopt-form2}
gives exactly the remaining constraints in~\eqref{eq:ogm-TC-problem}.
Conversely, given feasible $(W,T,C,r)$, setting $\Lambda=-TC$ gives a
feasible tuple in~\eqref{eq:disttosubopt-form2}. 
\end{proof}

For fixed $T,C$, this reformulation allows us to view the semidefinite constraint as a triangular positive semidefinite (PSD) completion. From this, we can eliminate $W$.

\begin{lemma}[Eliminating $W$]\label{lem:ogm-eliminate-W}
Fix admissible $T,C$ and $r>0$. There exists a unit lower triangular $W$
satisfying the semidefinite constraint in~\eqref{eq:ogm-TC-problem} if and
only if
\begin{equation}\label{eq:ogm-diagonal-conditions}
\frac{(\mathbf 1^\top Te_i)^2}{2r}
\leq
T_{i,i}-\frac12(e_N^\top C^{-1}e_i)^2
\qquad \forall 0\leq i\leq N.
\end{equation}
If $T$ is nonsingular and every inequality is tight, then this $W$ is unique.
\end{lemma}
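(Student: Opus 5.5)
The plan is to remove $C$ by a congruence, take a Schur complement in the $(0,0)$ entry, and reduce the question to a symmetric completion problem with prescribed diagonal.

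\emph{Congruence by $C$.} Apply the congruence $\operatorname{diag}(1,C^{-1})$ to the semidefinite matrix in~\eqref{eq:ogm-TC-problem}; this preserves positive semidefiniteness. The transformed blocks are as follows.
\begin{itemize}
\item The off-diagonal block becomes $-\frac12 t$, where $t:=T^\top\mathbf 1$, so $t_i=\mathbf 1^\top Te_i$.
\item The term $\frac12e_Ne_N^\top$ becomes $\frac12vv^\top$, where $v:=C^{-\top}e_N$, so $v_i=e_N^\top C^{-1}e_i$.
\item The main block becomes $\operatorname{sym}(U)$, where $U:=YT$ and $Y:=C^{-\top}W^\top$.
\end{itemize}
Since $C^{-\top}$ and $W^\top$ are unit upper triangular, $Y$ is unit upper triangular. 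Hence $U$ is upper triangular with $U_{i,i}=T_{i,i}$. Moreover, $W\mapsto Y$ is a bijection between unit lower triangular $W$ and unit upper triangular $Y$.

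\emph{Schur complement and reduction to a diagonal matrix.} Since $r>0$, taking the Schur complement with respect to the $(0,0)$ entry shows the constraint is equivalent to
\[
\operatorname{sym}(U)-S\succeq0,\qquad S:=\tfrac12vv^\top+\tfrac1{2r}tt^\top .
\]
The diagonal of $\operatorname{sym}(U)-S$ is $T_{i,i}-\frac12v_i^2-t_i^2/(2r)$, so necessity of~\eqref{eq:ogm-diagonal-conditions} follows immediately from nonnegativity of the diagonal of a PSD matrix.

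For sufficiency, aim to choose $Y$ so that the strictly upper part of $U$ equals $2S_{j,i}$ for $j<i$. Then $\operatorname{sym}(U)-S$ is diagonal with nonnegative entries, hence PSD. It remains to show such a $Y$ exists; this is the step I expect to need care, because $T$ may be singular.

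\emph{Solving for $Y$.} Solve column by column. For $j<i$,
\[
U_{j,i}=\sum_{j\le k\le i}Y_{j,k}T_{k,i}=Y_{j,i}T_{i,i}+(\text{terms in }Y_{j,k},\ k<i).
\]
If $T_{i,i}>0$, this determines $Y_{j,i}$ from earlier columns by division.

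If $T_{i,i}=0$, condition~\eqref{eq:ogm-diagonal-conditions} forces $t_i=0$ and $v_i=0$. Since $t_i=\sum_{j<i}T_{j,i}$ with every $T_{j,i}\le0$, the whole column $Te_i$ vanishes. Thus column $i$ of $U$ is zero for any $Y$. But column $i$ of $S$ is also zero, because $S_{j,i}=\frac12v_jv_i+\frac1{2r}t_jt_i=0$. So the target entries are automatically matched, and those entries of $Y$ may be set arbitrarily, e.g.\ to zero. Setting $W^\top=C^\top Y$ then gives the required unit lower triangular $W$.

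\emph{Uniqueness.} If every inequality in~\eqref{eq:ogm-diagonal-conditions} is tight, then $\operatorname{sym}(U)-S$ is PSD with zero diagonal, hence identically zero. Thus $\operatorname{sym}(U)=S$, which fixes the upper triangular $U$ uniquely: its diagonal is $T_{i,i}$ and its strict upper part is $2S_{j,i}$. When $T$ is nonsingular, $Y=UT^{-1}$ and $W^\top=C^\top UT^{-1}$ are then uniquely determined. The unit diagonal of $W$ is automatic, since $\operatorname{diag}(U)=\operatorname{diag}(T)$.
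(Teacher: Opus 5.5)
Your proof is correct and takes essentially the same route as the paper. Both arguments apply a congruence by $C^{-1}$ and a Schur complement, get necessity from the diagonal, get sufficiency from a triangular completion with off-diagonal entries $v_iv_j+t_it_j/r$ (using the vanishing-column argument when $T_{i,i}=0$), and get uniqueness from a PSD slack with zero diagonal. The only cosmetic difference is that you work with the transpose $U=YT=(T^\top WC^{-1})^\top$ and solve column by column, whereas the paper solves $T^\top X=L$ row by row and sets $W=XC$.
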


\begin{proof}
Taking the Schur complement of the semidefinite constraint and congruence by $C^{-1}$ gives
\[
 \operatorname{sym}(T^\top WC^{-1})
 -\frac12qq^\top-\frac1{2r}pp^\top\succeq0,
 \qquad q=C^{-\top}e_N,\quad p=T^\top\mathbf1.
\]
Its diagonal being nonnegative gives~\eqref{eq:ogm-diagonal-conditions}.
Conversely, suppose~\eqref{eq:ogm-diagonal-conditions} holds and let $L$ be
lower triangular with
\[
 L_{i,i}=T_{i,i},
 \qquad
 L_{i,j}=q_iq_j+\frac1r p_ip_j
 \quad \forall i>j.
\]
Then $\operatorname{sym}(L)-qq^\top/2-pp^\top/(2r)$ is diagonal and positive
semidefinite. If $T_{i,i}=0$, the corresponding condition gives $p_i=q_i=0$,
and the sign constraints force the $i$th column of $T$ to be zero. Hence one
can solve $T^\top X=L$ successively by rows with $X$ unit lower triangular,
taking the $i$th row of $X$ to be $e_i^\top$ whenever $T_{i,i}=0$. Setting
$W=XC$ proves
sufficiency.

If $T$ is nonsingular and every inequality is tight, then the slack positive semidefinite matrix $\operatorname{sym}(T^\top WC^{-1})
-\frac12qq^\top-\frac1{2r}pp^\top$ must have all zero diagonal. Hence it must be the zero matrix. Since $T^\top WC^{-1}$ is lower triangular, its symmetrization uniquely determines it. Then the bijection $W\mapsto T^\top WC^{-1}$ uniquely determines $W$.
\end{proof}

\subsection{Projection onto the Simplex Problem}

Ultimately, we will solve~\eqref{eq:ogm-TC-problem}, finding the unique
optimal factors $T_F^\star,C_F^\star$ and optimal value
$r_\mathtt{OGM}$. Namely, let $C_F^\star=I$, and let $T_F^\star$ be the upper
triangular matrix with the only nonzero entries given by
\begin{equation}\label{eq:ogm-T-star-r}
    (T_F^\star)_{i,i}=
    \begin{cases}
        2r_\mathtt{OGM}\theta_i^2,&0\leq i<N,\\
        1,&i=N,
    \end{cases}
    \qquad
    (T_F^\star)_{i,i+1}=-(T_F^\star)_{i,i}
    \qquad \forall 0\leq i<N.
\end{equation}
The following lemma establishes that $r_\mathtt{OGM}$ is a lower bound
on~\eqref{eq:ogm-TC-problem} and that, if it is attained, the associated
factors must equal $T_F^\star,C_F^\star$. It also shows that every condition
in~\eqref{eq:ogm-diagonal-conditions} is then tight, so
Lemma~\ref{lem:ogm-eliminate-W} will uniquely determine $W$. Doing so, one can (re)discover OGM, given in Appendix~\ref{subsec:discovering-ogm}.

\begin{lemma}[Projection onto Simplex Problem]\label{lem:ogm-TC-projection}
Let $W,T,C,r$ be feasible in~\eqref{eq:ogm-TC-problem}. Then there is a
vector $b$ satisfying
\begin{equation}\label{eq:ogm-backward-solution}
    Tb=e_N,
    \qquad
    0\leq b\leq\mathbf1.
\end{equation}
Define $\delta_i=(\mathbf 1^\top Te_i)b_i$ for each $i=0,\dots,N$.
Then $\delta\in\Delta_{N+1}$ and
\begin{equation}\label{eq:ogm-projected-chain}
    \max\left\{
        \frac{\delta_i^2}{2\sum_{j=0}^i\delta_j}:0\leq i<N,
        \ \delta_N^2
    \right\}
    \leq r.
\end{equation}
Consequently, $r\geq r_\mathtt{OGM}$. If $r=r_\mathtt{OGM}$, then
$T=T_F^\star$, $C=C_F^\star$, and every inequality
in~\eqref{eq:ogm-diagonal-conditions} is tight.
\end{lemma}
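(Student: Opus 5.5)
The plan is to build $b$ by back-substitution in $T$, read off the simplex bound from the diagonal conditions of Lemma~\ref{lem:ogm-eliminate-W}, and then run every inequality backwards in the equality case $r=r_\mathtt{OGM}$.

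\textbf{Constructing $b$.} Set $u=C\mathbf 1$. Since $C$ is unit lower triangular with nonpositive strict lower part, $u\le\mathbf 1$, and feasibility gives $Tu\ge e_N$.
\begin{itemize}
\item Row $N$ of $Tu\ge e_N$ reads $T_{N,N}u_N\ge1$. With $u_N\le1$ this forces $T_{N,N}\ge1>0$, so we take $b_N=1/T_{N,N}$.
\item For $i<N$ we solve $T_{i,i}b_i=-\sum_{j>i}T_{i,j}b_j$ backwards. The right side is $\ge0$ because $T_{i,j}\le0$ and, inductively, $b_j\ge0$.
\item To show $b\le\mathbf 1$, note that $T(u-b)\ge0$. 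By backward induction using $T_{i,i}>0$ and the nonpositive off-diagonal entries, this gives $u-b\ge0$, hence $b\le u\le\mathbf 1$.
\end{itemize}
If some $T_{i,i}=0$, I would set $b_i=0$. Row $i$ of $Tu\ge0$, together with the sign pattern of $T$, should then force the relevant $T_{i,j}b_j$ to vanish. I expect this degenerate bookkeeping, keeping $b$ consistent with $Tb=e_N$, to be the fiddliest step.

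\textbf{Simplex membership and the bound.} Let $\delta_i=(\mathbf 1^\top Te_i)b_i$. Then $\delta\ge0$ because $\mathbf 1^\top T\ge0$ and $b\ge0$. Also $\mathbf 1^\top\delta=\mathbf 1^\top Tb=\mathbf 1^\top e_N=1$, so $\delta\in\Delta_{N+1}$. For the bound, since $W$ is feasible, Lemma~\ref{lem:ogm-eliminate-W} gives $(\mathbf 1^\top Te_i)^2\le 2r\bigl(T_{i,i}-\tfrac12(e_N^\top C^{-1}e_i)^2\bigr)$.
\begin{itemize}
\item For $i<N$ this gives $\delta_i^2\le 2rT_{i,i}b_i^2$. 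Expanding the partial sum by rows,
\[
\sum_{j\le i}\delta_j=\sum_{k\le i}\sum_{j=k}^{i}T_{k,j}b_j .
\]
Since $Tb=e_N$ and $k\le i<N$, each inner sum equals $-\sum_{j>i}T_{k,j}b_j\ge0$. The $k=i$ term is $T_{i,i}b_i$, so $\sum_{j\le i}\delta_j\ge T_{i,i}b_i\ge T_{i,i}b_i^2$. Dividing gives $\delta_i^2/(2\sum_{j\le i}\delta_j)\le r$.
\item For $i=N$, $e_N^\top C^{-1}e_N=1$ and $b_N=1/T_{N,N}$. Hence $\delta_N^2\le 2r(T_{N,N}-\tfrac12)/T_{N,N}^2\le r$, because $2t-1\le t^2$.
\end{itemize}
Lemma~\ref{lem:ogm-simplex-prob} then yields $r\ge r_\mathtt{OGM}$.

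\textbf{Equality case.} If $r=r_\mathtt{OGM}$, every inequality above is tight and Lemma~\ref{lem:ogm-simplex-prob} forces $\delta=\delta^\star$.
\begin{itemize}
\item Tightness of $2t-1\le t^2$ gives $T_{N,N}=1$, so $b_N=1$.
\item Tightness of $T_{i,i}b_i\ge T_{i,i}b_i^2$, with $\delta_i^\star>0$, gives $b_i=1$ for all $i$.
\item From $b\le u\le\mathbf 1$ we get $u=C\mathbf 1=\mathbf 1$. Since the strict lower entries of $C$ are nonpositive, $C=I=C_F^\star$.
\item Tightness of the dropped terms $k<i$ gives $\sum_{j>i}T_{k,j}=0$ for all $k<i<N$, hence $T_{k,j}=0$ for $j\ge k+2$. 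So $T$ is upper bidiagonal.
\item Then $T\mathbf 1=Tb=e_N$ gives $T_{i,i+1}=-T_{i,i}$.
\item The partial sums become $\sum_{j\le i}\delta_j=T_{i,i}$. Matching these with $\delta^\star$ gives $T_{i,i}=2r_\mathtt{OGM}\theta_i^2$, i.e.\ $T=T_F^\star$.
\end{itemize}
Finally, tightness of the bound $\delta_i^2\le2rT_{i,i}b_i^2$ shows that every inequality in~\eqref{eq:ogm-diagonal-conditions} is tight (the $C^{-1}$ terms vanish since $C=I$).
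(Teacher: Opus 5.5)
Your route is the paper's own. You back-substitute $Tb=e_N$ while comparing against $C\mathbf 1$, use the cut expansion $\sum_{j\le i}\delta_j\ge T_{i,i}b_i$ with the diagonal conditions of Lemma~\ref{lem:ogm-eliminate-W}, apply Lemma~\ref{lem:ogm-simplex-prob}, and read the equality chain backwards. When every $T_{i,i}>0$, everything you write goes through, including the equality case.

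\textbf{The gap: singular $T$.} You leave the case $T_{i,i}=0$ at ``should then force''. It cannot be skipped: the $TC$ factorization is used precisely because $\Lambda$ may be singular, and $r\ge r_\mathtt{OGM}$ must hold for every feasible tuple. Two of your steps need $u_j-b_j\ge0$ at every $j$ with a nonzero entry $T_{k,j}$: your backward induction for $b\le u$, and your comparison $\sum_{j>i}T_{i,j}b_j\ge\sum_{j>i}T_{i,j}u_j\ge0$ at a degenerate row $i$. At a degenerate index $j$ you set $b_j=0$, but $u_j=1+\sum_{k<j}C_{j,k}$ is uncontrolled and can be negative. So both steps fail unless you know that those entries $T_{k,j}$ vanish.

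\textbf{The missing fact: a zero diagonal entry kills its whole column of $T$.} Since $C$ is unit lower triangular with nonpositive strict lower part, $C^{-1}=\sum_{m\ge0}(I-C)^m\ge0$. Hence $T^\top\mathbf 1=C^{-\top}(C^\top T^\top\mathbf 1)\ge0$; this is also the unproved fact behind your claim $\delta\ge0$. If $T_{i,i}=0$, then $\sum_{k<i}T_{k,i}\ge0$ with nonpositive summands, so $Te_i=0$. Consequently row $i$ of $C$ never enters $TC$ or any other constraint beyond its sign pattern, which is exactly why $u_i<0$ is possible.

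\textbf{The repair.} Let $\bar u$ agree with $u$ off the degenerate indices and vanish on them. Then $T\bar u=Tu\ge e_N$ and $\bar u\le\mathbf 1$. Your backward induction now gives $0\le b\le\bar u$, since degenerate coordinates contribute $\bar u_j-b_j=0$. At a degenerate row $i$ (necessarily $i<N$, because $T_{N,N}\ge1$) you get $0\le(T\bar u)_i=\sum_{j>i}T_{i,j}\bar u_j\le\sum_{j>i}T_{i,j}b_j\le0$. So the $i$th equation of $Tb=e_N$ holds with $b_i=0$. This is precisely the paper's $\bar y$ device.

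One minor slip: in your final tightness claim, the $C^{-1}$ term does not vanish at $i=N$, since $e_N^\top C^{-1}e_N=1$. Tightness there follows instead from $T_{N,N}=b_N=1$ and $\delta_N^2=r$.
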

\begin{proof}
First, we construct the desired vector $b$, solving the system of equations $Tb=e_N$.
Set $p=T^\top\mathbf1$ and $y=C\mathbf1$.
Since $C^{-1}\geq0$ and
$C^\top p=C^\top T^\top\mathbf1\geq0$, we have $p\geq0$.
Also $Ty\geq e_N$, while $y\leq\mathbf1$.
From this, it follows that if $T_{i,i}=0$, then
\[
    0\leq p_i=T_{i,i}+\sum_{j<i}T_{j,i}\leq0,
\]
and so the entire $i$th column of $T$ is zero. Set $\bar y_i=y_i$ when
$T_{i,i}>0$ and $\bar y_i=0$ otherwise. Then
$T\bar y=Ty\geq e_N$ and backward induction gives $\bar y\geq0$.

Now we proceed to construct $b$ via backward substitution on the system $Tb=e_N$, considering the equations $i=N,N-1,\dots,0$. When $T_{i,i}>0$, we can directly set $b_i$ to complete the $i$th equation with $0\leq b_i\leq\bar y_i$.
When $T_{i,i}=0$, the $i$th equation does not involve $b_i$. We set $b_i=0$. To verify our construction solves the $i$th equation in this case, let $\tau_i = \sum_{j>i} T_{i,j}b_j$, which we seek to show equals zero. Since $T_{i,j}\leq 0$ and $b_j\geq 0$, we have $\tau_i\leq 0$. However, since $b_j \leq \bar{y}_j$, we have $\tau_i \geq \sum_{j>i} T_{i,j}\bar{y}_j = (T\bar{y})_i \geq 0$. Hence $\tau_i=0$ and the $i$th equation is satisfied. Thus, we have constructed $b$ satisfying~\eqref{eq:ogm-backward-solution}.

It follows that $\delta\geq0$ and
$\sum_i\delta_i=\mathbf1^\top Tb=1$. For $i<N$, expanding $Tb=e_N$
gives
\begin{equation}\label{eq:ogm-cut-r}
    \sum_{j=0}^i\delta_j
    =T_{i,i}b_i
    +\sum_{k=0}^{i-1}\sum_{j=i+1}^N(-T_{k,j})b_j
    \geq T_{i,i}b_i.
\end{equation}
Together with~\eqref{eq:ogm-diagonal-conditions} and $b_i\leq1$, this
gives
\[
    \frac{\delta_i^2}{2\sum_{j=0}^i\delta_j}
    \leq
    \frac{(\mathbf1^\top Te_i)^2b_i}{2T_{i,i}}
    \leq
    \frac{(\mathbf1^\top Te_i)^2}{2T_{i,i}}
    \leq r
    \qquad \forall 0\leq i<N,
\]
under the convention $0/0=0$. At $i=N$, the last row gives
$b_N=1/T_{N,N}$. Since $b_N\leq y_N\leq1$, we have $T_{N,N}\geq1$.
Since $e_N^\top C^{-1}e_N=1$, the terminal diagonal condition gives
\[
    \delta_N^2
    \leq r\frac{2T_{N,N}-1}{T_{N,N}^2}
    \leq r.
\]
This proves~\eqref{eq:ogm-projected-chain}, and
Lemma~\ref{lem:ogm-simplex-prob} gives
$r\geq r_\mathtt{OGM}$.

Suppose now that $r=r_\mathtt{OGM}$.
Lemma~\ref{lem:ogm-simplex-prob} gives
$\delta=\delta^\star$, with every term in
\eqref{eq:ogm-projected-chain} tight. If $T_{i,i}=0$, then the $i$th
column of $T$ vanishes and $\delta_i=0$. Since every coordinate of
$\delta^\star$ is positive, $T_{i,i}>0$ for every $i$. For $i<N$,
equality in the chain gives
\[
    b_i=1,
    \qquad
    T_{i,i}=\sum_{j=0}^i\delta_j^\star
    =2r_\mathtt{OGM}\theta_i^2,
    \qquad
    e_N^\top C^{-1}e_i=0.
\]
The terminal equality gives $T_{N,N}=b_N=1$. Hence $b=\mathbf1$.
Since $b\leq C\mathbf1\leq\mathbf1$, we obtain
$C\mathbf1=\mathbf1$, and the sign constraints force $C=I$.
Equality in~\eqref{eq:ogm-cut-r} forces every entry of $T$ above the first
superdiagonal to vanish, and $T\mathbf1=e_N$ gives
$T_{i,i+1}=-T_{i,i}$. Thus $T=T_F^\star$ and $C=C_F^\star$.
Reading the equality chain backward also shows that every inequality
in~\eqref{eq:ogm-diagonal-conditions} is tight.
\end{proof}

The optimal factors $T_F^\star$ and $C_F^\star$ are not only unique, if attained, but also are the limit of any sequence approaching attainment of the $r_\mathtt{OGM}$ lower bound. The following proposition formalizes this stability result. The proof is a routine extension of the above uniqueness argument, and so deferred to Appendix~\ref{app:limiting-factors} for completeness.
\begin{proposition}[Uniqueness of Limiting Optimal Factors]
\label{prop:ogm-limiting-factors}
Let $W$ be fixed. If $(W,T^{(k)},C^{(k)},r^{(k)})$ is feasible in
\eqref{eq:ogm-TC-problem} and $r^{(k)}\to r_\mathtt{OGM}$, then $T^{(k)}\to T_F^\star$ and $C^{(k)}\to C_F^\star$.
\end{proposition}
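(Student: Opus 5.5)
The plan is to turn the uniqueness argument of Lemma~\ref{lem:ogm-TC-projection} into a quantitative stability statement, using compactness. First, I will show that the feasible factors are uniformly bounded along the sequence. The entries of $C^{(k)}$ are nonpositive below the diagonal, and $C^{(k)}\mathbf1=y^{(k)}$ satisfies $T^{(k)}y^{(k)}\geq e_N$. Bounding $C^{(k)}$ needs care. I would use the diagonal conditions~\eqref{eq:ogm-diagonal-conditions}, which hold for the fixed $W$ by Lemma~\ref{lem:ogm-eliminate-W}, together with the fact that $-T^{(k)}C^{(k)}$ is a feasible $\Lambda$ for the fixed $W$. For fixed $W$, the dual feasible set at level $r\leq r^{(0)}$ should be bounded. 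Feasibility of the PSD block gives $\frac{r}{2}\cdot(\text{diag})\geq \frac14(\mathbf1^\top\Lambda)_i^2$, which bounds the column sums of $\Lambda$. Since $\Lambda$ has nonnegative off-diagonal entries and nonpositive column sums, its entries are controlled by its diagonal. The diagonal entries $-\Lambda_{i,i}$ are in turn bounded by the diagonal of $\operatorname{sym}(\Lambda^\top W)$, which is where fixing $W$ is used. From this I expect boundedness of $\Lambda^{(k)}$. Boundedness of the factors should then follow from the construction in Proposition~\ref{prop:M-matrix-factorization}, for example via the entrywise bounds $C^{-1}\geq0$ and $0\leq b\leq\mathbf1$.

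Second, I will argue by subsequences. It suffices to show that every subsequence has a further subsequence with $T^{(k)}\to T_F^\star$ and $C^{(k)}\to C_F^\star$. By boundedness, extract a subsequence with $(T^{(k)},C^{(k)})\to(\bar T,\bar C)$. All constraints of~\eqref{eq:ogm-TC-problem} are closed: the triangular structure, the sign constraints, the linear inequalities, and the PSD constraint with $W$ fixed and $r^{(k)}\to r_\mathtt{OGM}>0$. Hence $(W,\bar T,\bar C,r_\mathtt{OGM})$ is feasible. Lemma~\ref{lem:ogm-TC-projection} then forces $\bar T=T_F^\star$ and $\bar C=C_F^\star$. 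Since every subsequential limit is the same, the full sequence converges.

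The main obstacle is the boundedness step. $\Lambda$ may be near-singular, and the triangular factorization is not continuous at singular matrices. A bounded $\Lambda^{(k)}$ therefore need not obviously yield bounded factors. I would first try to bound $T^{(k)}$ directly: the column sums $p=T^\top\mathbf1$ are bounded by $\sqrt{2rT_{i,i}}$ via~\eqref{eq:ogm-diagonal-conditions}, and $T_{i,i}$ is bounded by the Schur-complement diagonal $(T^\top WC^{-1})_{i,i}$.

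If that route fails, I would instead quantify the proof of Lemma~\ref{lem:ogm-TC-projection}. The vector $\delta^{(k)}$ satisfies $\Phi(\delta^{(k)})\leq r^{(k)}$, so $\delta^{(k)}\to\delta^\star$ by compactness of the simplex and uniqueness in Lemma~\ref{lem:ogm-simplex-prob}. The inequalities in the equality chain then become approximate equalities, which give $b^{(k)}\to\mathbf1$, $T^{(k)}_{i,i}\to2r_\mathtt{OGM}\theta_i^2$, and $T^{(k)}_{N,N}\to1$. It also gives that the off-superdiagonal mass in~\eqref{eq:ogm-cut-r} tends to zero. Because all entries are sign-constrained, these limits bound and determine every entry. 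Finally, $C^{(k)}\mathbf1\geq b^{(k)}\to\mathbf1$ with nonpositive off-diagonal entries forces $C^{(k)}\to I$.
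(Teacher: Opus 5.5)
Your final paragraph is essentially the paper's proof. It too feeds $\delta^{(k)}\to\delta^\star$ back through the chain of Lemma~\ref{lem:ogm-TC-projection} to obtain:
\begin{itemize}
\item $b^{(k)}\to\mathbf1$ and $T^{(k)}_{i,i}\to(T_F^\star)_{i,i}$;
\item vanishing entries above the first superdiagonal, from the gap in~\eqref{eq:ogm-cut-r};
\item $C^{(k)}\to I$, from $b^{(k)}\leq C^{(k)}\mathbf1\leq\mathbf1$.
\end{itemize}
The one step you leave implicit is that the superdiagonal is then pinned down by $T^{(k)}b^{(k)}=e_N$, which gives $T^{(k)}_{i,i+1}\to-(T_F^\star)_{i,i}$. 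This route proves the proposition directly and needs no subsequence extraction.

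By contrast, the boundedness arguments in your primary compactness route do not work and should be dropped.
\begin{itemize}
\item Dual feasibility for a fixed $W$ does not by itself bound $\Lambda$. The semidefinite block only gives \emph{lower} bounds on the diagonal of $-\operatorname{sym}(\Lambda^\top W)$. Its $i$th entry is $-\Lambda_{i,i}-\sum_{k>i}\Lambda_{k,i}W_{k,i}$, which mixes in off-diagonal multipliers with weights of either sign, so nothing caps $-\Lambda_{i,i}$. For example, with $W=I$, adding $-t(e_i-e_j)(e_i-e_j)^\top$ preserves feasibility for every $t\geq0$.
\item The bound via the Schur-complement diagonal is vacuous: $(T^\top WC^{-1})_{i,i}=T_{i,i}$ identically, and~\eqref{eq:ogm-diagonal-conditions} only bounds $T_{i,i}$ from below.
\end{itemize}
Any upper bound has to come from near-optimality of $r^{(k)}$, that is, from the projection chain itself. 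There, $T^{(k)}_{i,i}b^{(k)}_i\leq\sum_{j\leq i}\delta^{(k)}_j$ with $b^{(k)}_i$ bounded away from zero bounds the diagonal. The constraints $\mathbf1^\top T^{(k)}\geq0$ and $b^{(k)}\leq C^{(k)}\mathbf1$ then bound the off-diagonal entries of $T^{(k)}$ and $C^{(k)}$. This is how the paper obtains boundedness in the OGM-G and Lemniscate analogues, before running exactly your subsequence-plus-equality-case argument.
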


\subsection{Proof of Theorem~\ref{thm:disttosubopt}}
Fix a method $W$, and let $v_{D\rightarrow F}(W)$ denote the exact value of its PEP. By
Proposition~\ref{prop:distance-duality}, there is a sequence
$(\Lambda^{(k)},r^{(k)})$ feasible in
\eqref{eq:PEP-SDP-dual-clean} for this fixed $W$, with $r^{(k)}>0$ and
$r^{(k)}\to v_{D\rightarrow F}(W)$. Factor
\[
    -\Lambda^{(k)}=T^{(k)}C^{(k)}
\]
using Proposition~\ref{prop:M-matrix-factorization}.
Lemma~\ref{lem:ogm-TC} makes
$(W,T^{(k)},C^{(k)},r^{(k)})$ feasible in
\eqref{eq:ogm-TC-problem}, and
Lemma~\ref{lem:ogm-TC-projection} gives
$r^{(k)}\geq r_\mathtt{OGM}$. Passing to the limit shows $v_{D\rightarrow F}(W)\geq r_\mathtt{OGM}$. Suppose equality holds. Proposition~\ref{prop:ogm-limiting-factors} gives
\[
    T^{(k)}\to T_F^\star,
    \qquad
    C^{(k)}\to C_F^\star.
\]
Passing to the limit in the semidefinite constraint of
\eqref{eq:ogm-TC-problem} shows that
$(W,T_F^\star,C_F^\star,r_\mathtt{OGM})$ is feasible. The matrix
$T_F^\star$ is nonsingular, and every inequality in
\eqref{eq:ogm-diagonal-conditions} is tight. Hence
Lemma~\ref{lem:ogm-eliminate-W} shows that $W$ is the unique unit lower
triangular matrix satisfying the semidefinite constraint for these
factors.

The guarantee in~\cite{KF16} gives $v_{D\rightarrow F}(W_\mathtt{OGM})\leq r_\mathtt{OGM}$.
Together with the preceding lower bound, this gives
$v_{D\rightarrow F}(W_\mathtt{OGM})=r_\mathtt{OGM}$. Moreover, every optimal $W$ must equal $W_\mathtt{OGM}$.

\section{Uniqueness and Optimality of OGM-G}\label{sec:ogmg}

Next, we address the complementary setting of designing optimal algorithms for reducing the final squared gradient norm relative to initial suboptimality. Below we introduce the OGM-G algorithm. Then we present PEP in terms of dual multipliers for minimizing the squared gradient norm. 

Let $S\in\mathbb{R}^{(N+1)\times(N+1)}$ be the fixed cumulative reversal
matrix
\begin{equation*}
    S_{i,j}=
    \begin{cases}
        1,&i+j\geq N,\\
        0,&i+j<N.
    \end{cases}
\end{equation*}
The matrix $S$ is symmetric and invertible. For a unit lower triangular
method matrix $W$, define its H-dual method matrix by
\begin{equation*}
    \widehat W=SW^\top S^{-1}.
\end{equation*}
This is again unit lower triangular, and the map $W\mapsto\widehat W$ is an
involution.

In terms of the OGM matrix in~\eqref{eq:ogm-W}, OGM-G is defined by the matrix
\begin{equation*}
    W_\mathtt{OGM-G}
    =
    S W_\mathtt{OGM}^\top S^{-1}.
\end{equation*}
Equivalently, its H-dual method matrix is precisely the OGM matrix
$\widehat W_\mathtt{OGM-G}=W_\mathtt{OGM}$. This operation can be seen more
simply by defining the algorithm instead by an ``$H$ matrix'' such that
\begin{equation*}
    x_n=x_{n-1}-\frac{1}{L}\sum_{i=0}^{n-1}H_{n,i}g_i
\end{equation*}
where $H_{n,i}=W_{n,i}-W_{n-1,i}$ for $i=0,\dots,n-2$ and $H_{n,n-1}=W_{n,n-1}$. Such terms describe the increment between iterates rather than the accumulated
amount of each gradient. In these terms, the above operation is exactly the
anti-transpose operation. This operation is well-studied, being at the core of
the H-duality theory of~\cite{KOPR23}. For our analysis, cumulative matrices
$W$ rather than incremental matrices $H$ are more convenient, so we keep our
analysis in $W$ terms.

Our proof of optimality and uniqueness for OGM-G will closely mirror that of
OGM, up to a reversal in coordinate ordering in the reduced simplex problem.
This mirrors the reversal underlying the known H-duality relationship between
the two methods' convergence upper bounds~\cite{KOPR23}.

Let $\mathcal{F}(F)=1-(f_0-f_\star)$ denote the nonnegative normalization
constraint in this setting. We retain the same Gram representation as in the
suboptimality PEP: $G=P^\top P$ with
$P=[x_0-x_\star,g_0,\dots,g_N]$. For a fixed method $W$, its worst-case final
gradient norm can be equivalently expressed (assuming $d\geq N+2$) as
\begin{equation}\label{eq:ogmg-PEP-SDP}
\begin{aligned}
    \sup_{F,G}\quad &\frac12\|g_N\|^2\\
    \mathrm{s.t.}\quad
    &\mathcal Q_{i,j}(F,G;W)\geq0
    \qquad \forall i\neq j\in\mathcal I_\star,\\
    &\mathcal F(F)\geq0,\\
    &G\succeq0.
\end{aligned}
\end{equation}
Dually, this can be approached by considering nonnegative off-diagonal
multipliers $\lambda_{i,j}$, a nonnegative multiplier $\sigma$, and a positive
semidefinite matrix $Z$. To prove an upper bound $r$, these multipliers must
satisfy the identity
\begin{equation}\label{eq:ogmg-main-identity}
    r-\frac12\|g_N\|^2
    =\sum_{i\neq j\in\mathcal I_\star}\lambda_{i,j}
      \mathcal Q_{i,j}(F,G;W)
      +\sigma\mathcal F(F)+\langle Z,G\rangle
\end{equation}
for all vectors $F=(f_0-f_\star,\dots,f_N-f_\star)$ and symmetric matrices $G$.

As before, the diagonal interpolation terms vanish identically. So we have freedom to impose a convention for them. In this
PEP for the squared gradient norm, we use the mirrored convention
\begin{equation}\label{eq:gradient-lambda-diagonal}
    \lambda_{0,0}
    =-r
     -\sum_{j\in\mathcal I_\star\setminus\{0\}}\lambda_{j,0}, \qquad \lambda_{i,i}
    =
     -\sum_{j\in\mathcal I_\star\setminus\{i\}}\lambda_{j,i}
    \qquad \forall i=1,\dots,N.
\end{equation}
The additional term at $i=0$ reflects the normalization
$\mathcal F(F)=1-(f_0-f_\star)$.
Let $\Lambda=(\lambda_{i,j})_{i,j\in\mathcal I}$. Solving the identity
constraint gives the following compact dual. Its proof is deferred to
Appendix~\ref{app:gradient-dual}.
\begin{proposition}\label{prop:gradient-dual}
For any unit lower triangular matrix $W$, fixing $L=1$, the
PEP~\eqref{eq:ogmg-PEP-SDP} has dual
\begin{equation}\label{eq:ogmg-PEP-SDP-dual-clean}
\begin{aligned}
    \inf_{\Lambda,r}\quad &r\\
    \mathrm{s.t.}\quad
    &\Lambda_{i,j}\geq0
    \qquad \forall i\neq j\in\mathcal I,\\
    &\Lambda\mathbf1\leq0,
    \qquad
    \Lambda^\top\mathbf1=-r e_0,\\
    &-\operatorname{sym}(\Lambda^\top W)
      -\frac r2e_0e_0^\top
      -\frac12e_Ne_N^\top
      \succeq0.
\end{aligned}
\end{equation}
\end{proposition}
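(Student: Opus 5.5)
The dual in Proposition~\ref{prop:gradient-dual} comes from solving the identity~\eqref{eq:ogmg-main-identity} coefficient by coefficient, in the same way as Proposition~\ref{prop:distance-dual}. The plan is to expand both sides in the variables $F$ and $G$, match coefficients, and eliminate the multipliers involving $\star$, $\sigma$, and $Z$.

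\textbf{Step 1: coefficients of $F$.} The coefficient of $f_\star$ vanishes automatically, since we work with $F=(f_0-f_\star,\dots,f_N-f_\star)$. Each $\mathcal Q_{i,j}$ contributes $+\lambda_{i,j}$ to $f_i$ and $-\lambda_{i,j}$ to $f_j$. The normalization $\sigma\mathcal F$ contributes $-\sigma$ to $f_0$, and the left side has no $F$-terms. For each $i\in\mathcal I$ this gives
\[
\sum_{j\in\mathcal I_\star\setminus\{i\}}(\lambda_{i,j}-\lambda_{j,i})=\sigma\,[i=0].
\]
Write $\lambda_{i,\star}$ and $\lambda_{\star,i}$ for the multipliers involving $\star$. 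Under the diagonal convention~\eqref{eq:gradient-lambda-diagonal}, the row sums of $\Lambda$ are $(\Lambda\mathbf 1)_i=\sum_{j\in\mathcal I\setminus\{i\}}\lambda_{i,j}+\lambda_{i,i}$. Rearranging the balance equation then gives $(\Lambda\mathbf 1)_i=-\lambda_{i,\star}-\sigma[i=0]$, up to the extra $-r$ on the $0$ entry. The column sums become $(\Lambda^\top\mathbf1)_i=-\lambda_{\star,i}-r[i=0]$.

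These relations must produce $\Lambda\mathbf 1\le0$ and $\Lambda^\top\mathbf 1=-re_0$. This suggests two things. First, the multipliers $\lambda_{\star,i}$ can be taken to be zero, so the column-sum condition is an equality. Second, the roles of $\sigma$ and the $-r$ correction combine so that $\sigma=r$ at the optimum. The constant term confirms this: it reads $r=\sigma$, because the only constant on the right-hand side is $\sigma\cdot1$. Substituting $\sigma=r$ yields the stated sign constraints. I still need to check that the terms involving $\star$ are consistent with this; see Step 3.

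\textbf{Step 2: coefficients of $G$.} Using $x_i-x_\star=(x_0-x_\star)-\sum_k W_{i,k}g_k$ for $i\ge1$, write $x_i-x_j$ in terms of $W$. The shifted rows of $W$ (unit diagonal) are what produce $W$ inside $\operatorname{sym}(\Lambda^\top W)$. The quadratic part of $\sum\lambda_{i,j}\mathcal Q_{i,j}$ in the gradient block, together with $-\frac12\|g_N\|^2$ moved to the right-hand side, should collapse to
\[
Z_{gg}=-\operatorname{sym}(\Lambda^\top W)-\tfrac r2e_0e_0^\top-\tfrac12 e_Ne_N^\top .
\]
The $-\tfrac r2e_0e_0^\top$ term comes from the $-r$ adjustment in $\lambda_{0,0}$ paired with $W_{0,0}=1$. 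The cross terms with $x_0-x_\star$ have coefficient proportional to $\Lambda^\top\mathbf1+re_0$ together with the $\lambda_{\star,\cdot}$ multipliers. These vanish when $\Lambda^\top\mathbf1=-re_0$, and the $(x_0-x_\star)$-diagonal entry of $Z$ is $0$.

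\textbf{Step 3: reduction to the gradient block, and the main obstacle.} Because the distance-block entry of $Z$ is zero, $Z\succeq0$ forces the off-diagonal block of $Z$ to vanish. The condition then reduces to the stated PSD condition on the gradient block. This is why no Schur complement with $r$ appears here, in contrast with~\eqref{eq:PEP-SDP-dual-clean}.

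The main obstacle is the bookkeeping of the $\lambda_{\star,i}$ and $\lambda_{i,\star}$ multipliers. I must show that any feasible dual solution can be transformed, without increasing $r$, into one with $\lambda_{\star,i}=0$, leaving only the slack $\lambda_{i,\star}=-(\Lambda\mathbf1)_i\ge0$ (after accounting for $\sigma$). I expect that $\lambda_{\star,i}$ enters the $G$-coefficient only through the $(x_0-x_\star)$-row. That row is forced to zero, which forces $\lambda_{\star,i}=0$ beyond what is absorbed into $\Lambda^\top\mathbf 1$.

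The converse is direct. Given $(\Lambda,r)$ feasible in~\eqref{eq:ogmg-PEP-SDP-dual-clean}, set $\sigma=r$, $\lambda_{i,\star}=-(\Lambda\mathbf1)_i$, $\lambda_{\star,i}=0$, and $Z=0\oplus Z_{gg}$. Then verify the identity~\eqref{eq:ogmg-main-identity} term by term, by reversing the computations above.
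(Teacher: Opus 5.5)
Your plan is essentially the paper's proof: $\sigma=r$ comes from the constant term, and the zero $\|x_0-x_\star\|^2$ entry of $Z$ together with $Z\succeq0$ kills the cross block, so matching the $\langle g_i,x_0-x_\star\rangle$ coefficients forces $\lambda_{\star,i}=0$ outright (no ``transformation without increasing $r$'' is needed); the $f_i$-balances then give $\lambda_{i,\star}=-(\Lambda\mathbf1)_i$ and $\Lambda^\top\mathbf1=-re_0$, the gradient Gram coefficients give $Z_g$, and the converse is the same explicit construction. Three small bookkeeping fixes: the row sums are $(\Lambda\mathbf1)_i=-\lambda_{i,\star}+(\sigma-r)[i=0]$, so it is $+\sigma$ that cancels the $-r$ of the diagonal convention; the $-\tfrac r2e_0e_0^\top$ is the net of $-r$ from that convention (via $W_{0,0}=1$) and $+\tfrac r2$ from the explicit $\tfrac12\|g_i\|^2$ terms, whose $\|g_0\|^2$ coefficients fail to cancel by exactly $\sigma/2$; and in the converse you must also note $\sigma=r\geq0$, which follows from $-r=\mathbf1^\top\Lambda\mathbf1\leq0$.
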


\noindent The following proposition establishes strong duality between this primal--dual pair, proven in Appendix~\ref{app:gradient-duality}.
\begin{proposition}\label{prop:gradient-duality}
For any method $W$, the primal~\eqref{eq:ogmg-PEP-SDP} and the dual
PEP~\eqref{eq:ogmg-PEP-SDP-dual-clean} have equal value.
\end{proposition}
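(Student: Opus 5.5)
The proof combines standard conic (Lagrangian) duality with a Slater point for the primal. By Proposition~\ref{prop:gradient-dual}, the clean program~\eqref{eq:ogmg-PEP-SDP-dual-clean} is the Lagrangian dual of~\eqref{eq:ogmg-PEP-SDP}. That dual has variables $(\lambda,\sigma,Z,r)$ and is subject to the identity~\eqref{eq:ogmg-main-identity}, after the identity is solved and the diagonal convention~\eqref{eq:gradient-lambda-diagonal} is imposed. So it suffices to prove zero duality gap between~\eqref{eq:ogmg-PEP-SDP} and this Lagrangian dual. Weak duality is immediate: if a dual tuple satisfies~\eqref{eq:ogmg-main-identity}, then at any primal feasible $(F,G)$ every term on the right side is nonnegative, so $\frac12\|g_N\|^2\le r$. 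For the reverse inequality we invoke the conic duality theorem for the linear SDP~\eqref{eq:ogmg-PEP-SDP}. If the primal is strictly feasible and its value is finite, the dual value equals it and is attained. If the primal value is $+\infty$, weak duality shows the dual is infeasible, so both values are $+\infty$. Either way the values agree.

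The main step is to exhibit a strictly feasible primal point: $G\succ0$, every $\mathcal Q_{i,j}>0$ for $i\neq j\in\mathcal I_\star$, and $\mathcal F(F)>0$. We start from an actual function trace. Take $f(x)=\frac\mu2\|x-x_\star\|^2$ with $0<\mu<1$, together with some $x_0\ne x_\star$. For this quadratic, a direct computation gives
\[
\mathcal Q_{i,j}=\tfrac{\mu}{2}(1-\mu)\|x_i-x_j\|^2 .
\]
Running the method $W$ gives $x_n-x_\star=\pi_n(\mu)(x_0-x_\star)$, where $\pi_n$ is a polynomial with $\pi_n(0)=1$. Choosing $\mu$ outside the finitely many roots of $\pi_i-\pi_j$ and of $\pi_i$ makes all points $x_\star,x_0,\dots,x_N$ distinct, so every $\mathcal Q_{i,j}>0$. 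Rescaling $x_0-x_\star$ ensures $f_0-f_\star=\frac\mu2\|x_0-x_\star\|^2<1$, so $\mathcal F(F)>0$.

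This Gram matrix has rank one, so we perturb it. Keep $F$ fixed and replace $G$ by $G+\varepsilon I$. Each $\mathcal Q_{i,j}(F,G;W)$ is affine, hence continuous, in $G$ for fixed $W$. Therefore for small $\varepsilon>0$ all strict inequalities persist, $\mathcal F$ is unchanged, and $G+\varepsilon I\succ0$. This is a Slater point.

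The expected obstacle is bookkeeping rather than conceptual: one must check that the equivalence in Proposition~\ref{prop:gradient-dual} preserves values, including the free diagonal multipliers and the elimination of $\sigma$ and $Z$. In particular, an infimum over feasible $(\Lambda,r)$ must correspond exactly to an infimum over feasible $(\lambda,\sigma,Z,r)$. Since that proposition provides a bijective reparametrization of feasible sets with the same objective $r$, the values coincide. This completes the argument that~\eqref{eq:ogmg-PEP-SDP} and~\eqref{eq:ogmg-PEP-SDP-dual-clean} have equal value.
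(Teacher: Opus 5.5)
Your route differs from the paper's in where the constraint qualification is placed. You look for a Slater point of the primal~\eqref{eq:ogmg-PEP-SDP}. The paper instead checks only that the primal is feasible with bounded objective, and then exhibits a strictly feasible point of the \emph{dual}: $\Lambda_t=-2ta\,e_0e_0^\top-t\sum_{i=1}^Nc_i(e_i-e_0)(e_i-e_0)^\top$ at level $r=2ta$. Its semidefinite block represents $t\mathcal B_W(g)-\frac12\|g_N\|^2$, which is positive definite for large $t$ by Lemma~\ref{lem:app-coercive-combination}. Your quadratic construction is fine when it applies, but there is a genuine gap. The step ``choose $\mu$ outside the finitely many roots of $\pi_i-\pi_j$'' presumes $\pi_i-\pi_j\not\equiv0$, and this fails for admissible methods. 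For example, if $W_{1,0}=0$, then $x_1=x_0$ identically and $\pi_1\equiv\pi_0\equiv1$.

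A better test function cannot fix this, because for such $W$ the primal has no Slater point at all. Identically in $(F,G)$ one has $\mathcal Q_{0,1}+\mathcal Q_{1,0}=\langle g_0-g_1,x_0-x_1\rangle-\|g_0-g_1\|^2$. When $x_1\equiv x_0$ this equals $-\|g_0-g_1\|^2$, which is negative for every $G\succ0$. So not even a partial Slater point exists (one strict only in $G\succ0$). The same obstruction arises whenever two iterates coincide as fixed linear combinations of $x_0-x_\star,g_0,\dots,g_N$. The proposition is asserted for every unit lower triangular $W$, and the proof of Theorem~\ref{thm:subopttograd} needs it for an arbitrary $W$ to obtain $v_{F\rightarrow G}(W)\geq r_\mathtt{OGM}$. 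So these degenerate methods cannot be discarded. The missing idea is to place the interior point on the dual side, where one exists uniformly in $W$. Combined with feasibility of the zero trace, partial Slater for the dual gives equal values for every method. Your weak-duality step and your handling of the value-preserving reparametrization in Proposition~\ref{prop:gradient-dual} are fine.
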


\subsection{A Direct Triangular Reformulation}
We first rewrite the transpose of the compact dual multiplier as $-r^{-1}\Lambda^\top=TC$.
Note this exists since the factorization in Proposition~\ref{prop:M-matrix-factorization} applies even
when $\Lambda$ is singular. Combining it with
Proposition~\ref{prop:gradient-dual} gives the following reformulation.

\begin{lemma}[The reversed $TC$ reformulation]\label{lem:ogmg-TC}
The value in~\eqref{eq:subopttograd-design} is
\begin{equation}\label{eq:ogmg-TC-problem}
\begin{aligned}
 r_{F\rightarrow G}
 =\inf_{W,T,C,r}\quad &r\\
 \mathrm{s.t.}\quad
 &W\text{ is unit lower triangular},\\
 &T\text{ is upper triangular},\qquad T_{i,i}\geq0,\qquad T_{i,j}\leq0\quad \forall i<j,
 \qquad T_{0,0}=1,\\
 &C\text{ is unit lower triangular},\qquad C_{i,j}\leq0\quad \forall i>j,
 \qquad C\mathbf1=e_0,\\
 &(TC)_{i,j}\leq0\quad \forall i\neq j,\qquad C^\top T^\top\mathbf1\geq0,\\
 &\operatorname{sym}(TCW)
   -\frac12e_0e_0^\top
   -\frac1{2r}e_Ne_N^\top
   \succeq0,
 \qquad r>0.
\end{aligned}
\end{equation}
For each fixed $W$, factoring $-r^{-1}\Lambda^\top=TC$ identifies the feasible dual PEP solutions with those of the above reformulation.
\end{lemma}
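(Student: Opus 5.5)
The reformulation comes from feeding the compact dual of Proposition~\ref{prop:gradient-dual} into the design problem and then factoring the normalized, transposed multiplier matrix. First, Propositions~\ref{prop:gradient-dual} and~\ref{prop:gradient-duality} write $r_{F\rightarrow G}$ as the infimum over $(W,\Lambda,r)$ of $r$ subject to the constraints of~\eqref{eq:ogmg-PEP-SDP-dual-clean}, with $W$ unit lower triangular. This is the analogue of~\eqref{eq:disttosubopt-form2}. We may restrict to $r>0$: the objective $\frac12\|g_N\|^2$ can be positive at feasible points (e.g.\ a quadratic with small curvature), so the value is positive and no dual certificate has $r\le 0$.

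Now fix a feasible tuple with $r>0$ and set $A=-r^{-1}\Lambda^\top$.
\begin{itemize}
\item The off-diagonal entries of $A$ are $-r^{-1}\Lambda_{j,i}\le 0$.
\item $A\mathbf1=-r^{-1}\Lambda^\top\mathbf1=e_0\ge 0$.
\item $\mathbf1^\top A=-r^{-1}(\Lambda\mathbf1)^\top\ge 0$.
\end{itemize}
So Proposition~\ref{prop:M-matrix-factorization} applies in its special case $A\mathbf1=e_0$. It gives $A=TC$ with $T$ upper triangular, $T_{i,i}\ge0$, $T_{i,j}\le0$ for $i<j$, and $T_{0,0}=1$. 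It also gives $C$ unit lower triangular with $C_{i,j}\le 0$ for $i>j$ and $C\mathbf1=e_0$.

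The scalar constraints then translate directly.
\begin{itemize}
\item $(TC)_{i,j}\le 0$ for $i\ne j$ is the nonnegativity of $\Lambda$ off the diagonal.
\item $C^\top T^\top\mathbf1=A^\top\mathbf1=-r^{-1}\Lambda\mathbf1\ge 0$ is the condition $\Lambda\mathbf1\le0$.
\item $TC\mathbf1=Te_0=T_{0,0}e_0=e_0$ is equivalent to $\Lambda^\top\mathbf1=-re_0$.
\end{itemize}
For the semidefinite constraint, substitute $\Lambda^\top=-rTC$ to get $-\operatorname{sym}(\Lambda^\top W)=r\operatorname{sym}(TCW)$. Dividing the constraint by $r>0$ yields exactly $\operatorname{sym}(TCW)-\frac12e_0e_0^\top-\frac1{2r}e_Ne_N^\top\succeq0$. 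Hence every feasible point of the dual design problem maps to a feasible $(W,T,C,r)$ in~\eqref{eq:ogmg-TC-problem} with the same $r$.

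Conversely, take any feasible $(W,T,C,r)$ in~\eqref{eq:ogmg-TC-problem} and set $\Lambda=-r(TC)^\top$. The same identities run backwards. Off-diagonal nonnegativity and $\Lambda\mathbf1\le0$ are immediate. For the remaining equality, $C\mathbf1=e_0$ and $T_{0,0}=1$ give $TC\mathbf1=Te_0=e_0$, so $\Lambda^\top\mathbf1=-rTC\mathbf1=-re_0$. Multiplying the semidefinite constraint by $r$ recovers the dual constraint. The two infima are therefore equal, and for fixed $W$ the factorization identifies the feasible sets.

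The only delicate point is that the constraints $C\mathbf1=e_0$ and $T_{0,0}=1$ in~\eqref{eq:ogmg-TC-problem} can actually be achieved. This is supplied by the $A\mathbf1=e_0$ clause of Proposition~\ref{prop:M-matrix-factorization}, which works even when $\Lambda$ is singular and hence where uniqueness of the factorization fails. Without it, one would have to renormalize the factors by hand. Everything else is routine substitution.
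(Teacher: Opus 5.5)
Your proposal is correct and matches the paper's proof. Both set $A=-r^{-1}\Lambda^\top$ and use the $A\mathbf1=e_0$ case of Proposition~\ref{prop:M-matrix-factorization} to get $T_{0,0}=1$ and $C\mathbf1=e_0$. Both then substitute and divide the semidefinite constraint by $r$, and reverse the argument via $\Lambda=-rC^\top T^\top$. Your weak-duality argument that every dual certificate has $r>0$ fills in a point the paper leaves implicit.
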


\begin{proof}
Let $(W,\Lambda,r)$ be a feasible algorithm and PEP proof pair, and set
$A=-r^{-1}\Lambda^\top$. Then $A$ has nonpositive off-diagonal entries,
\[
    A\mathbf1=e_0,
    \qquad
    \mathbf1^\top A\geq0.
\]
Proposition~\ref{prop:M-matrix-factorization} gives a factorization $A=TC$
with the stated triangular forms and signs, chosen so that
$C\mathbf1=e_0$ and $T_{0,0}=1$. Substituting
$\Lambda=-rC^\top T^\top$ into the semidefinite constraint gives
\[
r\left(
\operatorname{sym}(TCW)
-\frac12e_0e_0^\top
-\frac1{2r}e_Ne_N^\top
\right)\succeq0,
\]
and the remaining scalar constraints become exactly those in
\eqref{eq:ogmg-TC-problem}.
Conversely, every feasible tuple $(W,T,C,r)$ gives a feasible compact dual
tuple by setting $\Lambda=-rC^\top T^\top$.
\end{proof}

For fixed $T,C$, the semidefinite constraint is again a triangular PSD
completion.

\begin{lemma}[Eliminating $W$]\label{lem:ogmg-eliminate-W}
Fix admissible $T,C$ and $r>0$. There exists a unit lower triangular $W$
satisfying the semidefinite constraint in~\eqref{eq:ogmg-TC-problem} if and
only if there is a vector $b\geq0$ satisfying
\begin{equation}\label{eq:ogmg-backward-solution}
    Tb=e_N,
    \qquad
    b_i=0\quad\text{whenever }T_{i,i}=0
\end{equation}
and
\begin{equation}\label{eq:ogmg-diagonal-conditions}
(b_0)^2\leq r,
\qquad
\frac{T_{i,i}}{2}b_i^2\leq r
\qquad \forall 1\leq i\leq N.
\end{equation}
If every inequality is tight, then this $W$ is unique.
\end{lemma}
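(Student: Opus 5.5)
The plan is to reduce the semidefinite constraint to a triangular PSD completion, exactly as in Lemma~\ref{lem:ogm-eliminate-W}, but now congruing by $T^{-1}$ on the left instead of by $C^{-1}$ on the right.

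\textbf{Change of variables.} As $W$ ranges over unit lower triangular matrices, so does $M=CW$, and the map $W\mapsto CW$ is a bijection. The constraint therefore reads
\[
\operatorname{sym}(TM)-\tfrac12e_0e_0^\top-\tfrac1{2r}e_Ne_N^\top\succeq0,
\qquad M\ \text{unit lower triangular}.
\]

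\textbf{Nonsingular case.} Suppose first that $T$ is nonsingular, and apply the congruence $X\mapsto T^{-1}XT^{-\top}$. The matrix $T^{-1}(TM)T^{-\top}=MT^{-\top}$ is lower triangular with diagonal entries $1/T_{i,i}$. Because $T$ is upper triangular with $T_{0,0}=1$, we have $T^{-1}e_0=e_0$. Setting $b=T^{-1}e_N$, the constraint becomes
\[
\operatorname{sym}(L)-\tfrac12e_0e_0^\top-\tfrac1{2r}bb^\top\succeq0,\qquad L=MT^{-\top}.
\]
Here $L$ is an arbitrary lower triangular matrix with prescribed diagonal $1/T_{i,i}$, since $M\mapsto MT^{-\top}$ is a bijection onto such matrices. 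Nonnegativity of the diagonal gives
\[
1\ge\tfrac12+\tfrac{b_0^2}{2r}\quad (i=0),\qquad \tfrac1{T_{i,i}}\ge \tfrac{b_i^2}{2r}\quad (i\ge1),
\]
which is exactly~\eqref{eq:ogmg-diagonal-conditions}. The sign pattern of $T$ gives $b\ge0$: upper triangular with nonnegative diagonal and nonpositive off-diagonal entries makes backward substitution on $Tb=e_N$ produce nonnegative entries.

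For sufficiency, choose the strictly lower entries $L_{i,j}=\tfrac1r b_ib_j+[j=0]\,\cdot$(the $e_0e_0^\top$ contribution, which only touches the diagonal). The difference is then diagonal with nonnegative entries, and we recover $W=C^{-1}LT^\top$. If every inequality is tight, the slack PSD matrix has zero diagonal and hence vanishes. The symmetric part then determines the lower triangular $L$ uniquely, and so $W$ is unique.

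\textbf{Singular case.} I expect this to be the main obstacle. First I would show $e_N\in\operatorname{range}(T)$. Take any $u$ with $T^\top u=0$. Then $u^\top\operatorname{sym}(TM)u=u^\top TMu=0$, so the PSD quadratic form evaluated at $u$ equals $-\tfrac12u_0^2-\tfrac1{2r}u_N^2\ge0$. This forces $u_N=0$, i.e.\ $e_N\perp\ker T^\top$, so $e_N\in\operatorname{range}(T)$.

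Next, as in the proof of Lemma~\ref{lem:ogm-TC-projection}, the constraint $C^\top T^\top\mathbf1\ge0$ with $C^{-1}\ge0$ makes the $i$th column of $T$ vanish whenever $T_{i,i}=0$. Backward substitution with $b_i=0$ at those indices then yields a solution $b\ge0$ of~\eqref{eq:ogmg-backward-solution}; consistency at the singular rows follows from solvability together with the sign argument used there.

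Finally, restrict to the index set $J=\{i:T_{i,i}>0\}$. The zero columns of $T$ make the rows and columns of $TM$ outside $J$ interact only through $M$ terms that can be chosen freely. I would therefore take the rows of $M$ outside $J$ to be $e_i^\top$, mirroring the proof of Lemma~\ref{lem:ogm-eliminate-W}, and then run the nonsingular argument on the principal block $T_{JJ}$. Note that $0\in J$ since $T_{0,0}=1$. The delicate points, which I would verify carefully, are:
\begin{itemize}
\item that the diagonal conditions at indices $i\notin J$ are vacuous, because $b_i=0$ there;
\item that necessity in the singular case still extracts~\eqref{eq:ogmg-diagonal-conditions} on $J$. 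For this I would pass to the principal submatrix indexed by $J$ and use $e_N\in\operatorname{range}(T)$ as established above.
\end{itemize}
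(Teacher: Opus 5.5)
Your nonsingular case is fine. So are the range argument and the backward substitution. Necessity in the singular case also works by passing to the $J\times J$ principal block: the vanishing columns give $(TM)_{J,J}=T_{J,J}M_{J,J}$, and $Tb=e_N$ with $N\in J$ gives $b_J=T_{J,J}^{-1}e_N$. This is a valid alternative to the paper's test vector $x$ with $T^\top x=e_i$.

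\textbf{The gap is sufficiency when $T$ is singular.} Your claim that the rows and columns of $TM$ outside $J$ ``interact only through $M$ terms that can be chosen freely'' is false. The vanishing columns of $T$ make the rows of $M$ outside $J$ irrelevant, so setting them to $e_i^\top$ accomplishes nothing. But the rows of $T$ indexed outside $J$ need not vanish. The slack entries in those rows therefore depend on $M_{J,:}$, so a PSD completion of the $J\times J$ block need not extend. Concretely, take $N=3$, $r=1$,
\[
T=\begin{pmatrix}1&0&0&-\frac12\\0&0&-1&0\\0&0&1&0\\0&0&0&1\end{pmatrix},\qquad
C=\begin{pmatrix}1&0&0&0\\-1&1&0&0\\0&-1&1&0\\-1&0&0&1\end{pmatrix}.
\]
This pair is admissible: $C\mathbf1=e_0$, $TC$ has nonpositive off-diagonal entries, and its column sums are $(\frac12,0,0,\frac12)$. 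Here $J=\{0,2,3\}$ and $b=(\frac12,0,0,1)$ satisfies every condition. Your block argument returns $M_{J,J}=I$. However, for every unit lower triangular $M$, the $\{1,2\}$ principal submatrix of the slack is
\[
\begin{pmatrix}-m&\frac12(m-1)\\ \frac12(m-1)&1\end{pmatrix},\qquad m=M_{2,1},
\]
which is PSD only if $(m+1)^2\le0$. So the entry $M_{2,1}$, in a row of $J$ and a column outside $J$, is forced to equal $-1$; it is not free. For instance $M=I$, which is compatible with your block choice, is infeasible. Your list of ``delicate points'' covers only necessity and the vacuous indices, not this extension step.

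\textbf{The repair} is to drop the principal-block reduction and use a global congruence that needs no invertibility:
\[
\operatorname{sym}(TLT^\top)-\tfrac12e_0e_0^\top-\tfrac1{2r}e_Ne_N^\top
=T\Bigl(\operatorname{sym}(L)-\tfrac12e_0e_0^\top-\tfrac1{2r}bb^\top\Bigr)T^\top .
\]
This identity uses only $Te_0=e_0$ and $Tb=e_N$. Take $L$ as in your nonsingular construction, with $L_{i,i}=0$ for $i\notin J$. Let the rows $i\in J$ of $M$ be the \emph{full} rows of $LT^\top$, including the columns outside $J$; these rows are unit lower triangular. Set the rows $i\notin J$ to $e_i^\top$. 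The vanishing columns of $T$ then give $TM=TLT^\top$, which is exactly the paper's construction.

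Separately, your uniqueness argument is stated only in the nonsingular branch. Add the one-line observation that tightness of $\frac{T_{i,i}}2b_i^2\le r$ with $r>0$ forces every $T_{i,i}>0$. The tight case therefore always falls into that branch.
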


\begin{proof}
If $\operatorname{sym}(TCW)-\frac12e_0e_0^\top
-\frac1{2r}e_Ne_N^\top \succeq0$ and $y\in\ker T^\top$, then
$$0\leq y^\top \left(\operatorname{sym}(TCW)-\frac12e_0e_0^\top
-\frac1{2r}e_Ne_N^\top\right)y=-y_0^2/2-y_N^2/(2r),$$
so $e_N\in\operatorname{range}(T)$.
Moreover, since $C^{-\top}\geq0$ and $C^\top T^\top\mathbf1\geq0$,
\[
    T^\top\mathbf1
    =C^{-\top}(C^\top T^\top\mathbf1)\geq0.
\]
Thus $T_{i,i}=0$ forces the $i$th column of $T$ to vanish. Since
$e_N\in\operatorname{range}(T)$, backward substitution gives a solution of
\eqref{eq:ogmg-backward-solution}: when $T_{i,i}>0$, the nonpositive
entries above the diagonal make the resulting coordinate nonnegative, while when
$T_{i,i}=0$, the coordinate is free and is set equal to zero. Thus $b\geq0$.
For $T_{i,i}>0$, solve $T^\top x=e_i$ by forward substitution. Then
$x_i=1/T_{i,i}$, 
$$x_N=x^\top e_N=x^\top Tb=e_i^\top b=b_i,$$
and $x^\top TCWx=1/T_{i,i}$. Hence, we have that
$$ 0 \leq x^\top\left(\operatorname{sym}(TCW)-\frac12e_0e_0^\top
-\frac1{2r}e_Ne_N^\top\right)x = \frac{1}{T_{i,i}} - \frac{1}{2}x_0^2 - \frac{1}{2r}b_i^2.$$
For $i=0$, since $T_{0,0}=1$ and $x_0=1$, this implies $b_0^2\leq r$, while for $i\geq 1$, since $x_0=0$, this gives $\frac{T_{i,i}}{2}b_i^2\leq r$. That is,~\eqref{eq:ogmg-diagonal-conditions}.

Conversely, take such a vector $b$ and define
the lower triangular matrix $L$ by
\[
 L_{i,i}=\begin{cases}T_{i,i}^{-1},&T_{i,i}>0,\\0,&T_{i,i}=0,\end{cases}
 \qquad
 L_{i,j}=\frac1r b_ib_j\quad(i>j).
\]
Then $\operatorname{sym}(L)-\frac12e_0e_0^\top
 -\frac1{2r}bb^\top\succeq0$.
Let $X$ be unit lower triangular, with its $i$th row equal to the $i$th row
of $LT^\top$ when $T_{i,i}>0$ and equal to $e_i^\top$ when $T_{i,i}=0$.
Since every column of $T$ with zero diagonal entry vanishes,
$TX=TLT^\top$. Thus $W=C^{-1}X$ is unit lower triangular and
\[
 \operatorname{sym}(TCW)-\frac12e_0e_0^\top
 -\frac1{2r}e_Ne_N^\top=T\left(\operatorname{sym}(L)-\frac12e_0e_0^\top
 -\frac1{2r}bb^\top\right)T^\top\succeq0.
\]
If every inequality is tight, then $T$ is nonsingular because $r>0$; the
usual congruence by $T^{-\top}$ gives a positive semidefinite matrix with
zero diagonal. Hence this slack matrix must be exactly zero, which uniquely determines $W$.
\end{proof}

\subsection{Projection onto the Simplex Problem}

Ultimately, we will solve~\eqref{eq:ogmg-TC-problem}, finding the unique
optimal factors $T_G^\star,C_G^\star$ and optimal value
$r_\mathtt{OGM}$. Namely, let $T_G^\star$ be the upper triangular matrix
with entries
\begin{equation}\label{eq:ogmg-T-star-r}
\begin{aligned}
(T_G^\star)_{0,0}=1,
\qquad
(T_G^\star)_{i,i}=\frac{1}{2r_\mathtt{OGM}\theta_{N-i}^2}
\quad \forall 1\leq i\leq N,
\qquad
(T_G^\star)_{i,j}
=-\bigl((T_G^\star)_{i+1,i+1}-(T_G^\star)_{i,i}\bigr)
\quad \forall i<j,
\end{aligned}
\end{equation}
and let $C_G^\star$ be the unit lower triangular matrix with
\begin{equation}\label{eq:ogmg-C-star-r}
(C_G^\star)_{i,i}=1,
\qquad
(C_G^\star)_{i,i-1}=-1
\qquad \forall 1\leq i\leq N,
\end{equation}
and all other entries below the diagonal equal to zero. The following lemma
establishes that $r_\mathtt{OGM}$ is a lower bound on
\eqref{eq:ogmg-TC-problem} and that, if it is attained, the associated
factors must equal $T_G^\star,C_G^\star$. It also shows that every condition
in~\eqref{eq:ogmg-diagonal-conditions} is then tight, so
Lemma~\ref{lem:ogmg-eliminate-W} will uniquely determine $W$. Doing so, one can (re)discover OGM-G, given in Appendix~\ref{subsec:discovering-ogmg}.

\begin{lemma}[Reversed Projection onto the Simplex Problem]
\label{lem:ogmg-TC-projection}
Let $W,T,C,r$ be feasible in~\eqref{eq:ogmg-TC-problem}, let $b$ be chosen as
in~\eqref{eq:ogmg-backward-solution}, and set
\begin{equation}\label{eq:ogmg-green-quantities}
q=C^{-\top}e_N,
\qquad
D_i=\sum_{j=i}^N b_jq_j
\quad \forall 0\leq i\leq N.
\end{equation}
Define $\delta\in\mathbb{R}^{N+1}$ by
\begin{equation}\label{eq:ogmg-delta-from-TC}
\delta_{N-i}=\frac{b_iq_i}{D_0}
\qquad \forall 0\leq i\leq N.
\end{equation}
Then $D_0\geq1$, $\delta\in\Delta_{N+1}$, and
\begin{equation}\label{eq:ogmg-projected-chain}
\max\left\{
\frac{\delta_i^2}{2\sum_{j=0}^i\delta_j}:0\leq i<N,
\ \delta_N^2
\right\}
\leq r.
\end{equation}
Consequently, $r\geq r_\mathtt{OGM}$. If $r=r_\mathtt{OGM}$, then
$T=T_G^\star$, $C=C_G^\star$, and every inequality in
\eqref{eq:ogmg-diagonal-conditions} is tight.
\end{lemma}

\begin{proof}
The normalization $C\mathbf1=e_0$ gives $C^{-1}e_0=\mathbf1$, and therefore
$q_0=e_N^\top C^{-1}e_0=1$. Also $q\geq0$.
Fix $i$ with $T_{i,i}>0$, and consider the trailing principal block
$A_i=(TC)_{i:N,i:N}$. The vectors
\[
u=\frac1{T_{i,i}}C_{i:N,i:N}^{-1}e_i,
\qquad
v=C_{i:N,i:N}^{-1}b_{i:N}
\]
satisfy $A_iu=e_i$ and $A_iv=e_N$.
Their relevant coordinates are
\[
u_i=\frac1{T_{i,i}},
\qquad
u_N=\frac{q_i}{T_{i,i}},
\qquad
v_N=D_i.
\]
Lemma~\ref{lem:app-inverse-bound}, applied to this principal block, gives
\begin{equation}\label{eq:ogmg-singular-inverse-bound}
q_i\leq1,
\qquad
q_i^2\leq T_{i,i}D_i.
\end{equation}
If $T_{i,i}=0$, then $b_i=0$ by~\eqref{eq:ogmg-backward-solution}, so the
corresponding coordinate $\delta_{N-i}$ vanishes. The case $i=0$ gives
$D_0\geq1$.

All terms in~\eqref{eq:ogmg-delta-from-TC} are nonnegative and their sum is
one. Moreover, the partial sums are given by $\sum_{\ell=0}^{N-i}\delta_\ell= D_i/D_0$.
The terminal coordinate then satisfies $\delta_N^2= b_0^2/D_0^2\leq r$.
For every $1\leq i\leq N$ with $T_{i,i}>0$, equations
\eqref{eq:ogmg-diagonal-conditions} and
\eqref{eq:ogmg-singular-inverse-bound} give
\[
\frac{\delta_{N-i}^2}
{2\sum_{\ell=0}^{N-i}\delta_\ell}
=
\frac{b_i^2q_i^2}{2D_iD_0}
\leq
\frac{T_{i,i}b_i^2}{2D_0}
\leq r.
\]
If $T_{i,i}=0$, then $\delta_{N-i}=0$, so the same bound holds.
This proves~\eqref{eq:ogmg-projected-chain}, and
Lemma~\ref{lem:ogm-simplex-prob} gives $r\geq r_\mathtt{OGM}$.

Suppose now that $r=r_\mathtt{OGM}$. Then
$\delta=\delta^\star$ and every term in the preceding chains is tight. Since
every coordinate of $\delta^\star$ is positive, every $b_i$ is positive.
By~\eqref{eq:ogmg-backward-solution}, $T_{i,i}>0$ for every $i$, so $T$ is
nonsingular. Tightness of the
terminal chain gives $D_0=1$ and $b_0=\sqrt{r_\mathtt{OGM}}$.
For every $i\geq1$, equality in~\eqref{eq:ogmg-singular-inverse-bound}
gives $q_i^2=T_{i,i}D_i$. Lemma~\ref{lem:app-inverse-bound} gives
\[
    \frac{q_i}{T_{i,i}}
    \leq\min\left\{\frac1{T_{i,i}},D_i\right\},
    \qquad
    \left(\frac{q_i}{T_{i,i}}\right)^2=\frac{D_i}{T_{i,i}}.
\]
Thus
\[
    \frac{q_i}{T_{i,i}}=\frac1{T_{i,i}}=D_i,
    \qquad
    q_i=1
    \qquad \forall 0\leq i\leq N.
\]
Equivalently, $e_N^\top C^{-1}=\mathbf1^\top$, or
$\mathbf1^\top C=e_N^\top$. Together with $C\mathbf1=e_0$ and the nonpositive
entries below the diagonal, these two normalizations force~\eqref{eq:ogmg-C-star-r}:
row $1$ forces $C_{1,0}=-1$, column $0$ then has no other nonzero entry below the diagonal,
and induction gives the remaining adjacent entries.

Since $D_0=1$ and $q=\mathbf1$, equation~\eqref{eq:ogmg-delta-from-TC} gives
\[
b_i=\delta_{N-i}^\star
\qquad \forall 0\leq i\leq N.
\]
Tightness in~\eqref{eq:ogmg-diagonal-conditions} now fixes the diagonal of
$T$ as in~\eqref{eq:ogmg-T-star-r}. It remains only to determine the
entries above the diagonal. Since $C$ is the unit lower bidiagonal matrix with subdiagonal entries $-1$,
$C^\top T^\top\mathbf1\geq0$ and $T_{0,0}=1$ imply
\[
1=(\mathbf1^\top T)_0\geq(\mathbf1^\top T)_1\geq\cdots
\geq(\mathbf1^\top T)_N\geq0.
\]
Moreover, the entries $b_i=\delta_{N-i}^\star$ form a positive probability vector, and $(\mathbf1^\top T)b=\mathbf1^\top e_N=1$.
It follows that $\mathbf1^\top T=\mathbf1^\top$. The off-diagonal
inequalities for $TC$ give
\[
T_{i,j}\leq T_{i,j+1}
\qquad \forall 0\leq i<j<N.
\]
For every $1\leq i\leq N$, summing $Tb=e_N$ over rows $i,\dots,N$
gives
\[
    \sum_{j=i}^N b_j\sum_{k=i}^jT_{k,j}=1.
\]
On the other hand, equality in~\eqref{eq:ogmg-singular-inverse-bound}, together
with $q=\mathbf1$, gives
\[
    T_{i,i}\sum_{j=i}^N b_j=1.
\]
Subtracting these identities and using $\mathbf1^\top T=\mathbf1^\top$
yields $\sum_{j=i}^N b_j
\sum_{k=0}^{i-1}(T_{ki}-T_{kj})=0$.
Every weight is positive, and the off-diagonal inequalities above make every
summand nonpositive. Hence
\[
T_{kj}=T_{ki}
\qquad \forall j\geq i,\quad 0\leq k<i.
\]
Taking $i=k+1$ shows that the entries above the diagonal in each row of $T$ are equal. Subtracting
the column-sum identity in column $j$ from that in column $j+1$ gives
$T_{j,j+1}=T_{jj}-T_{j+1,j+1}$, proving the formula for the entries above the diagonal in
\eqref{eq:ogmg-T-star-r}. Thus $T=T_G^\star$ and $C=C_G^\star$.
Reading the equality chain backward also shows that every condition in
\eqref{eq:ogmg-diagonal-conditions} is tight.
\end{proof}

Just as occurred for OGM, these optimal factors $T_G^\star$ and $C_G^\star$ are also the unique limit of any sequence of factors converging towards optimality. Appendix~\ref{app:limiting-factors} proves this stability extension.
\begin{proposition}[Uniqueness of Limiting Optimal Factors]
\label{prop:ogmg-limiting-factors}
Let $W$ be fixed. If $(W,T^{(k)},C^{(k)},r^{(k)})$ is feasible in
\eqref{eq:ogmg-TC-problem} and $r^{(k)}\to r_\mathtt{OGM}$, then $T^{(k)}\to T_G^\star$ and $C^{(k)}\to C_G^\star$.
\end{proposition}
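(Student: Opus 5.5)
The plan is a compactness-plus-uniqueness argument: bound the sequence, extract convergent subsequences, and show every limit point is a feasible optimal tuple, which Lemma~\ref{lem:ogmg-TC-projection} then pins down.

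\textbf{Step 1: boundedness of the factors.} For $C^{(k)}$, the normalization $C\mathbf1=e_0$ with nonpositive subdiagonal entries gives, for every row $i\geq1$, $\sum_{j<i}|C_{i,j}|=1$. So all entries of $C^{(k)}$ lie in $[-1,1]$. For $T^{(k)}$, I first bound the diagonal. Let $b^{(k)}$ be chosen as in~\eqref{eq:ogmg-backward-solution}, and let $\delta^{(k)}$ be the simplex vector of~\eqref{eq:ogmg-delta-from-TC}. From the proof of Lemma~\ref{lem:ogmg-TC-projection} we have $q_i^2\leq T_{i,i}D_i$ with $D_i\leq D_0$. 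We also have $\mathbf1^\top T\leq T_{0,0}\mathbf1^\top$-type bounds, because $C^\top T^\top\mathbf1\geq0$ and the columns of $C^{-\top}$ are nonnegative; in the limiting regime this yields the monotone column sums. Column sums of $T$ are nonnegative, and the off-diagonal entries are nonpositive. Hence $|T_{k,j}|\leq T_{j,j}$ for $k<j$, so it suffices to bound the diagonal. For the diagonal, tightness of the $i$th inequality in~\eqref{eq:ogmg-diagonal-conditions} is approached: $\Phi(\delta^{(k)})\leq r^{(k)}\to r_\mathtt{OGM}$.

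\textbf{Step 2: quantitative stability of the simplex problem.} Every inequality used in proving Lemma~\ref{lem:ogm-simplex-prob} is continuous and has a unique equality case. A compactness argument on $\Delta_{N+1}$ therefore gives $\delta^{(k)}\to\delta^\star$: any limit point $\bar\delta$ satisfies $\Phi(\bar\delta)\leq r_\mathtt{OGM}$ by lower semicontinuity of $\Phi$. The ratio terms need care when partial sums vanish, but $\delta_0^2/(2\delta_0)=\delta_0/2$ is continuous, and positive partial sums persist. So $\bar\delta=\delta^\star$ by uniqueness. Consequently every slack in the chain of Lemma~\ref{lem:ogmg-TC-projection} tends to zero. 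In particular $D_0^{(k)}\to1$, $b^{(k)}\to\delta^\star$ reversed, and $T^{(k)}_{i,i}(b^{(k)}_i)^2/2\to r_\mathtt{OGM}$. Since $b_i^\star>0$, this bounds $T^{(k)}_{i,i}$ and keeps it away from $0$. Thus $T^{(k)}$ is bounded.

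\textbf{Step 3: identify limits.} Take any convergent subsequence $(T^{(k)},C^{(k)})\to(\bar T,\bar C)$. All constraints of~\eqref{eq:ogmg-TC-problem} are closed, so $(W,\bar T,\bar C,r_\mathtt{OGM})$ is feasible; the semidefinite constraint passes to the limit with $1/(2r^{(k)})\to1/(2r_\mathtt{OGM})$. Lemma~\ref{lem:ogmg-TC-projection} with $r=r_\mathtt{OGM}$ then forces $\bar T=T_G^\star$ and $\bar C=C_G^\star$. Since every subsequential limit is the same, and the sequence is bounded, the whole sequence converges.

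\textbf{Main obstacle.} The main obstacle is Step 1/2's boundedness of $T^{(k)}$: a priori, diagonal entries of $T$ could blow up with vanishing $b_i$. I handle this through the convergence $b^{(k)}\to$ a strictly positive vector, which needs $D_0^{(k)}$ bounded above. For that, I would use $\delta_N^2=b_0^2/D_0^2\leq r$ together with $D_0\geq1$. I would also try to show $D_0\to1$ directly from the tight terminal chain: $1=\sum\delta\leq2\Phi\theta_{N-1}^2+\sqrt\Phi$ combined with the scaling by $D_0$. If this fails, I would instead bound $T_{i,i}\leq T_{0,0}$-type quantities via the column sums $\mathbf1^\top T\geq0$ and the normalization $T_{0,0}=1$ through the congruence in Lemma~\ref{lem:ogmg-eliminate-W}.
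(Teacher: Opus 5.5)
Your architecture is the paper's. Compactness of $\Delta_{N+1}$ with uniqueness in Lemma~\ref{lem:ogm-simplex-prob} gives $\delta^{(k)}\to\delta^\star$, and the terminal chain $\delta_N^2=b_0^2/D_0^2\le b_0^2\le r^{(k)}$ gives $D_0^{(k)}\to1$. A positive lower bound on $b_i^{(k)}$ turns $\frac{T_{i,i}}{2}b_i^2\le r^{(k)}$ into a bound on the diagonal of $T^{(k)}$. Nonnegative column sums bound its off-diagonal part, and $C\mathbf1=e_0$ bounds $C^{(k)}$. Any subsequential limit is then feasible at $r_\mathtt{OGM}$, so the equality case of Lemma~\ref{lem:ogmg-TC-projection} identifies it. Steps 1 and 3 are fine. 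Your aside about monotone column sums of $T$ holds only for the bidiagonal $C_G^\star$, but you never use it.

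The step that is not justified is the lower bound on $b_i^{(k)}$. You claim $b_i^{(k)}\to\delta^\star_{N-i}$ because ``every slack tends to zero.'' But the chain only fixes the products $b_iq_i=D_0\delta_{N-i}$. The slack of its middle inequality only controls the ratio $q_i^2/(T_{i,i}D_i)$, and that ratio tending to one says nothing about $q_i$ itself. To get $q_i\to1$ you would also need the inverse-entry bound $q_i\le\min\{1,T_{i,i}D_i\}$ from Lemma~\ref{lem:app-inverse-bound}.

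Your ``main obstacle'' paragraph looks in the wrong place. An upper bound on $D_0^{(k)}$ is not what is needed here. Also, $\delta_N^2=b_0^2/D_0^2\le r$ with $D_0\ge1$ does not supply one; that comes from $b_0\le\sqrt{r^{(k)}}$ and $\delta_N^{(k)}\to\delta_N^\star>0$. Your fallback, a bound of the form $T_{i,i}\le T_{0,0}$, fails even at the optimum, since $(T_G^\star)_{N,N}=\theta_N^2/2>1$.

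The missing ingredient is just $q_i\le1$. Because $D_0\ge1$, one has $b_i^{(k)}q_i^{(k)}=D_0^{(k)}\delta^{(k)}_{N-i}\ge\delta^{(k)}_{N-i}\to\delta^\star_{N-i}>0$. So $b_i^{(k)}>0$, and hence $T_{i,i}^{(k)}>0$ for large $k$ by the convention in~\eqref{eq:ogmg-backward-solution}. Then~\eqref{eq:ogmg-singular-inverse-bound} gives $q_i^{(k)}\le1$, so $b_i^{(k)}\ge\delta^{(k)}_{N-i}$ is bounded away from zero. This is how the paper closes the step, and with it inserted the rest of your argument goes through unchanged.
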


\subsection{Proof of Theorem~\ref{thm:subopttograd}}
Fix a method $W$, and let $v_{F\rightarrow G}(W)$ denote the exact value of its gradient-norm
PEP~\eqref{eq:ogmg-PEP-SDP}. By Proposition~\ref{prop:gradient-duality}, there is a sequence
$(\Lambda^{(k)},r^{(k)})$ feasible in~\eqref{eq:ogmg-PEP-SDP-dual-clean} for
this fixed $W$, with $r^{(k)}>0$ and $r^{(k)}\to v_{F\rightarrow G}(W)$. Factor
\[
    -\frac1{r^{(k)}}(\Lambda^{(k)})^\top=T^{(k)}C^{(k)}
\]
using Proposition~\ref{prop:M-matrix-factorization}. Lemma~\ref{lem:ogmg-TC}
makes $(W,T^{(k)},C^{(k)},r^{(k)})$ feasible in
\eqref{eq:ogmg-TC-problem}, and Lemma~\ref{lem:ogmg-TC-projection} gives
$r^{(k)}\geq r_\mathtt{OGM}$. Passing to the limit shows $v_{F\rightarrow G}(W)\geq r_\mathtt{OGM}$.

Suppose equality holds. Proposition~\ref{prop:ogmg-limiting-factors} gives
\[
    T^{(k)}\to T_G^\star,
    \qquad
    C^{(k)}\to C_G^\star.
\]
Passing to the limit in the semidefinite constraint of
\eqref{eq:ogmg-TC-problem} shows that
$(W,T_G^\star,C_G^\star,r_\mathtt{OGM})$ is feasible. The matrix
$T_G^\star$ is nonsingular, and every inequality in
\eqref{eq:ogmg-diagonal-conditions} is tight. Hence
Lemma~\ref{lem:ogmg-eliminate-W} shows that $W$ is the unique unit lower
triangular matrix satisfying the semidefinite constraint for these factors.

The guarantee in~\cite{KF21} gives $v_{F\rightarrow G}(W_\mathtt{OGM-G})\leq r_\mathtt{OGM}$.
Together with the preceding lower bound, this gives
$v_{F\rightarrow G}(W_\mathtt{OGM-G})=r_\mathtt{OGM}$. Moreover, every optimal $W$ must equal $W_\mathtt{OGM-G}$.

\section{Uniqueness and Optimality of Lemniscate Acceleration}\label{sec:lemni}

Finally, we consider minimizing the final squared gradient norm relative to the initial squared distance to a minimizer, i.e., $r_{D\rightarrow G}$ from~\eqref{eq:disttograd-design}. A value $r$ in~\eqref{eq:disttograd-design} is equivalent to the guarantee
\[
    \|\nabla f(x_N)\|
    \leq \sqrt r\,L\|x_0-x_\star\|.
\]
Kim, Ryu, and Das Gupta~\cite{KimRyuDasGupta26} recently introduced the Lemniscate acceleration method for this criterion. We show below that the method is exactly optimal among fixed-step first-order methods.

For a given budget $N\geq1$, let $\Omega_N>0$ and
\[
    1=\rho_0>\rho_1>\cdots>\rho_N>\rho_{N+1}=0
\]
be the unique values satisfying
\begin{equation}\label{eq:lemni-rate-recursion}
    \Omega_N(\rho_i-\rho_{i+1})^2
    =\rho_i(1-\rho_{i+1}^2)
    \qquad \forall 0\leq i\leq N.
\end{equation}
The existence and uniqueness of these values are established in~\cite{KimRyuDasGupta26}. For $0\leq i\leq N$, write
\begin{equation*}
    \phi_i=\frac{1+\rho_i^2}{2\rho_i}.
\end{equation*}
In particular, $\phi_0=1$. In our cumulative notation, Lemniscate acceleration is the unit lower triangular matrix $W_\mathtt{Lemni}$ given by
\begin{equation}\label{eq:lemni-W}
    (W_\mathtt{Lemni})_{n,n}=1,
    \qquad
    (W_\mathtt{Lemni})_{n,i}
    =1+\Omega_N(\phi_{i+1}-\phi_i)
      (\phi_{N-i}-\phi_{N-n})
    \quad \forall 0\leq i<n\leq N.
\end{equation}
Like OGM and OGM-G, this method also possesses a recursive description in terms of two sequences $x_n,z_n$. This equivalent description is given in~\cite{KimRyuDasGupta26}. For our development here, the matrix $W$ is the only description needed.

The convergence guarantee in~\cite{KimRyuDasGupta26} ensures that $\|g_N\|^2\leq L^2\|x_0-x_\star\|^2/\Omega_N^2$.
Accordingly, we denote $r_\mathtt{Lemni}=1/\Omega_N^2$.
Our goal is to show that this upper bound is the optimal value of~\eqref{eq:disttograd-design}.

\subsection{The PEP for Distance-to-Gradient Guarantees}

Let $\mathcal D(G)=1-\frac12\|x_0-x_\star\|^2$ be the same normalization used in the suboptimality PEP. For a fixed method $W$, its worst-case final gradient norm under an initial-distance bound can be equivalently expressed (assuming $d\geq N+2$) as
\begin{equation}\label{eq:lemni-PEP-SDP}
\begin{aligned}
    \sup_{F,G}\quad &\frac12\|g_N\|^2\\
    \mathrm{s.t.}\quad
    &\mathcal Q_{i,j}(F,G;W)\geq0
    \qquad \forall i\neq j\in\mathcal I_\star,\\
    &\mathcal D(G)\geq0,\\
    &G\succeq0.
\end{aligned}
\end{equation}
Dually, to prove an upper bound $r$, consider nonnegative off-diagonal
multipliers $\lambda_{i,j}$, a nonnegative multiplier $\sigma$, and a positive
semidefinite matrix $Z$ satisfying
\begin{equation}\label{eq:lemni-main-identity}
    r-\frac12\|g_N\|^2
    =\sum_{i\neq j\in\mathcal I_\star}\lambda_{i,j}
      \mathcal Q_{i,j}(F,G;W)
      +\sigma\mathcal D(G)+\langle Z,G\rangle
\end{equation}
for all vectors $F=(f_0-f_\star,\dots,f_N-f_\star)$ and symmetric matrices $G$.

As in the suboptimality setting, we use the diagonal convention
\begin{equation}\label{eq:lemni-lambda-diagonal}
    \lambda_{i,i}
    =-\sum_{j\in\mathcal I_\star\setminus\{i\}}\lambda_{j,i}
    \qquad \forall i\in\mathcal I.
\end{equation}
Let $\Lambda=(\lambda_{i,j})_{i,j\in\mathcal I}$. Solving the identity
constraint gives the following compact dual. Its proof is deferred to
Appendix~\ref{app:lemni-dual}.
\begin{proposition}\label{prop:lemni-dual}
For any unit lower triangular matrix $W$, fixing $L=1$, the
PEP~\eqref{eq:lemni-PEP-SDP} has dual
\begin{equation}\label{eq:lemni-PEP-SDP-dual-clean}
\begin{aligned}
    \inf_{\Lambda,r}\quad &r\\
    \mathrm{s.t.}\quad
    &\Lambda_{i,j}\geq0
    \qquad \forall i\neq j\in\mathcal I,\\
    &\Lambda^\top\mathbf1\leq0,
    \qquad
    \Lambda\mathbf1\leq0,\\
    &\begin{pmatrix}
        \frac r2 & \frac12\mathbf1^\top\Lambda\\[1mm]
        \frac12\Lambda^\top\mathbf1
        &-\operatorname{sym}(\Lambda^\top W)-\frac12e_Ne_N^\top
    \end{pmatrix}\succeq0.
\end{aligned}
\end{equation}
\end{proposition}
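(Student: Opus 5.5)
The plan is to follow the template used for Proposition~\ref{prop:distance-dual}: write out the identity~\eqref{eq:lemni-main-identity} coefficient by coefficient, solve for $\sigma$, $Z$ and the multipliers involving $\star$, and then take Schur complements. The Gram variable is again $G=P^\top P$ with $P=[x_0-x_\star,g_0,\dots,g_N]$, and we substitute $x_n-x_\star=(x_0-x_\star)-\sum_{i<n}W_{n,i}g_i$.

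\textbf{Step 1: match the coefficients of $F$.} Each $\mathcal Q_{i,j}$ contributes $+\lambda_{i,j}$ to the coefficient of $f_i-f_\star$ and $-\lambda_{i,j}$ to that of $f_j-f_\star$. The normalization $\mathcal D$ does not involve $F$, and the left side has no $F$-term. So for every $i\in\mathcal I$,
\[
\sum_{j\neq i}\lambda_{i,j}-\sum_{j\neq i}\lambda_{j,i}=0,
\]
with the sums running over $\mathcal I_\star$. Using the diagonal convention~\eqref{eq:lemni-lambda-diagonal}, this reads $\lambda_{i,\star}=-(\Lambda\mathbf1)_i$, where $(\Lambda\mathbf 1)_i$ includes the diagonal entry. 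Hence $\lambda_{i,\star}\geq0$ is exactly $\Lambda\mathbf1\leq0$. Likewise $\lambda_{\star,i}=-(\Lambda^\top\mathbf1)_i\geq0$ gives $\Lambda^\top\mathbf1\leq0$. These are the same computations as in the suboptimality case, but with $e_N$ absent, because the objective $\frac12\|g_N\|^2$ has no $f_N$ term.

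\textbf{Step 2: match the coefficients of $G$.} This produces $\langle Z,G\rangle$ equal to a quadratic form $\langle M,G\rangle$, so the requirement is $Z=M\succeq0$. The constant term forces $\sigma=r$, and the $(x_0-x_\star)$ diagonal block of $M$ is $\sigma/2=r/2$.

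For the cross terms, I expect $\langle g_j,x_i-x_j\rangle$ expressed in $W$ to aggregate into $-\operatorname{sym}(\Lambda^\top W)$ plus the $\frac12\|g_i-g_j\|^2$ corrections. With the diagonal convention and $W_{i,i}=1$, those corrections should be absorbed, as happened in Appendix~\ref{app:distance-dual}. The objective adds $-\frac12e_Ne_N^\top$.

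The cross block between $x_0-x_\star$ and the gradients collects $\lambda_{j,\star}$, $\lambda_{\star,i}$ and the $x_0$ components of the iterates. Using Step 1 and $W$ unit lower triangular, it should reduce to $\frac12\Lambda^\top\mathbf1$.

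\textbf{Main obstacle.} The delicate step is this bookkeeping of the $\star$ terms. Specifically, one must verify that $\mathcal Q_{\star,i}$ and $\mathcal Q_{i,\star}$ combine with the $g_i$ terms so that the off-diagonal block is exactly $\frac12\mathbf1^\top\Lambda$ and the gradient block is exactly $-\operatorname{sym}(\Lambda^\top W)-\frac12e_Ne_N^\top$.

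Since the only change from Proposition~\ref{prop:distance-dual} is the objective, I would argue by difference rather than redo everything. Replacing $f_N-f_\star$ by $\frac12\|g_N\|^2$ removes $-e_N$ from the $F$-equations (turning $\Lambda\mathbf1\leq-e_N$ into $\Lambda\mathbf1\leq0$) and contributes $-\frac12 e_Ne_N^\top$ in the $g_N$ diagonal entry. One should then check that in the distance case this same $-\frac12e_Ne_N^\top$ term arose from $\lambda_{N,\star}$ contributions rather than from the objective, so that the two accounts match. If the matching fails, I would fall back on the full coefficient computation.

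Conversely, any $(\Lambda,r)$ satisfying the three constraints defines nonnegative $\lambda_{i,\star}$, $\lambda_{\star,i}$, $\sigma=r$, and a $Z\succeq0$ making the identity hold. So the reduced problem is exactly the dual, completing the proof.
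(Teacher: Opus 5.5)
Your proposal is correct and follows the paper's route. Constant terms give $\sigma=r$. The function-value coefficients together with the diagonal convention give $\lambda_{\star,i}=-(\Lambda^\top\mathbf1)_i$ and $\lambda_{i,\star}=-(\Lambda\mathbf1)_i$. The Gram block is then obtained exactly as in Proposition~\ref{prop:distance-dual}.

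Your ``by difference'' check also goes through cleanly. $\mathcal Q_{N,\star}=(f_N-f_\star)-\frac12\|g_N\|^2$ is precisely the difference of the two objectives, so shifting $\lambda_{N,\star}$ by one turns one certificate identity into the other with the same $Z$. In the converse, $r\geq0$ follows from the $r/2$ diagonal entry of the block.
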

\noindent Strong duality holds between this primal--dual pair as well, again proven in Appendix~\ref{app:lemni-duality}.
\begin{proposition}\label{prop:lemni-duality}
For any method $W$, the primal~\eqref{eq:lemni-PEP-SDP} and the dual
PEP~\eqref{eq:lemni-PEP-SDP-dual-clean} have equal value.
\end{proposition}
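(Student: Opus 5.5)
The plan is to prove strong duality by mirroring the argument already used for Propositions~\ref{prop:distance-duality} and~\ref{prop:gradient-duality}. I would first establish weak duality, then show the primal~\eqref{eq:lemni-PEP-SDP} has a Slater point, and then invoke conic strong duality for semidefinite programs. The compact dual of Proposition~\ref{prop:lemni-dual} is, by construction, a reparametrization of the Lagrangian dual in multipliers $(\lambda,\sigma,Z)$ satisfying identity~\eqref{eq:lemni-main-identity}. So it suffices to show that the standard Lagrangian dual of~\eqref{eq:lemni-PEP-SDP} has the same value as the primal.

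\textbf{Weak duality.} Given any feasible $(\lambda,\sigma,Z,r)$ and any primal feasible $(F,G)$, I evaluate identity~\eqref{eq:lemni-main-identity} at $(F,G)$. Every term on its right side is nonnegative, so $\frac12\|g_N\|^2\le r$. Hence the dual value is at least the primal value.

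\textbf{Slater point.} I would take a strictly convex quadratic $f(x)=\frac{\mu}{2}\|x-x_\star\|^2$ with $0<\mu<1$, perturbed if necessary. I choose $x_0$ with $\frac12\|x_0-x_\star\|^2<1$ and run the fixed method $W$. Then:
\begin{itemize}
\item interpolation inequalities between distinct points are strict, because $\mathcal Q_{i,j}=\frac{\mu}{2}\|x_i-x_j\|^2-\frac{\mu^2}{2}\|x_i-x_j\|^2>0$ whenever $x_i\neq x_j$, and $\mu<1$;
\item $\mathcal D(G)>0$.
\end{itemize}
The delicate requirements are $G\succ0$ and $x_i\ne x_j$. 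Two iterates may coincide, and $G$ is singular because $g_i=\mu(x_i-x_\star)$ is a linear function of the iterates.

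To handle this, I would use the standard Taylor--Hendrickx--Glineur construction in dimension $d\ge N+2$. Instead of the pure quadratic, I pick data $(x_i,f_i,g_i)_{i\in\mathcal I_\star}$ with generic gradients $g_0,\dots,g_N$ and $x_0-x_\star$ linearly independent, which is possible since the Gram matrix has size $N+2$. The iterates are forced by $W$. The function values are taken as $f_i=\frac12\|x_i-x_\star\|^2\cdot c$ plus small corrections, and the gradients are scaled to be small perturbations of those of a strongly convex quadratic of modulus below $1$. Strict inequalities are open conditions, so a small generic perturbation of the quadratic trace preserves $\mathcal Q_{i,j}>0$ and $\mathcal D>0$ while making $G\succ0$.

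The one failure case is coincident iterates, $x_i=x_j$ for some $i\ne j$. Then $\mathcal Q_{i,j}$ reduces to $f_i-f_j-\frac12\|g_i-g_j\|^2$, and $\mathcal Q_{i,j}+\mathcal Q_{j,i}=-\|g_i-g_j\|^2<0$ unless $g_i=g_j$. This is incompatible with a positive definite $G$. Such a coincidence depends on the choice of $W$, and the paper presumably resolves it as in Appendix~\ref{app:distance-duality}. If a Slater point fails, I would fall back on this route:
\begin{itemize}
\item Use dual strict feasibility. Exhibit $(\lambda,\sigma,Z,r)$ with $\lambda_{i,j}>0$, $\sigma>0$ and $Z\succ0$ satisfying~\eqref{eq:lemni-main-identity}, taking $r$ large.
\item The available freedom: choose all $\lambda_{i,j}$ to be small equal positive constants and $\sigma$ large. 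The $F$-coefficients must cancel, which is a linear flow-balance condition solvable with positive entries (for instance uniform weights balance automatically, since in/out sums match).
\item Then $Z$ is determined as the $G$-coefficient residual. Making $\sigma$ large dominates the $(x_0-x_\star)$ block, and the $-\frac12\|g_i-g_j\|^2$ terms together with the $-\frac12 e_Ne_N^\top$ term must be checked to leave $Z\succ0$.
\end{itemize}
This check is the main obstacle. The bilinear cross terms $\langle g_j,x_i-x_j\rangle$, expressed through $W$, are indefinite. The gradient block of $Z$ has the form $-\operatorname{sym}(\Lambda^\top W)-\frac12\sum\lambda\|g_i-g_j\|^2$-type terms, and its positivity is not automatic.

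Hence my primary route remains the primal Slater point, with the generic-perturbation argument. The residual nondegeneracy, namely distinct iterates with independent gradients, would be ensured by choosing the gradients generically, since the $x_i$ are then distinct whenever the corresponding rows of $W$ differ. The one remaining case, identical rows, cannot occur because $W$ is unit lower triangular. Strong duality then gives equal values, and the sup is attained when finite.
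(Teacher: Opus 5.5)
\textbf{The primal Slater route fails for some methods $W$.} A primal Slater point does not exist for every method $W$, and the statement is claimed for all $W$. Your dismissal of coincident iterates is incorrect. The iterate $x_n=x_0-\sum_{i<n}W_{n,i}g_i$ uses only the strictly lower triangular entries of row $n$. The unit diagonal is only the bookkeeping convention for the step $x_n-g_n$, so it does not separate iterates. For instance, $W=I$ (or any $W$ with $W_{1,0}=0$) gives $x_1=x_0$ in every trace, and then
\[
\mathcal Q_{0,1}+\mathcal Q_{1,0}=-\|g_1-g_0\|^2\geq 0
\]
forces $g_1=g_0$. Hence every feasible $G$ is singular, and no generic perturbation can fix this. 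Strong duality for such degenerate $W$ is genuinely needed. The proof of Theorem~\ref{thm:disttograd} applies this proposition to an arbitrary $W$ to extract dual certificates approaching $v_{D\rightarrow G}(W)$. When some quadratic $\frac{\mu}{2}\|x-x_\star\|^2$ produces pairwise distinct iterates, your argument can be made rigorous: take that strictly feasible trace $(F,G)$ and replace $G$ by $G+\epsilon I$. That still leaves the degenerate methods uncovered.

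\textbf{The missing idea is in the route you abandoned.} Dual strict feasibility works, but it needs large, $W$-adapted weights rather than small uniform ones. Small multipliers leave the gradient block near $-\frac12e_Ne_N^\top$, which is not PSD. The paper takes
\[
\Lambda_t=-ta\,e_0e_0^\top-t\sum_{i=1}^Nc_i(e_i-e_0)(e_i-e_0)^\top .
\]
This places multipliers only on the pairs $\{0,i\}$ and $\{0,\star\}$, and it satisfies $\Lambda_t\mathbf1=\Lambda_t^\top\mathbf1=-ta\,e_0$. Write $v=x_0-x_\star$, $d_i=g_i-g_0$ and $h_i=\sum_{j<i}W_{i,j}g_j$. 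The semidefinite block is then the quadratic form $t\mathcal B_W(g)-ta\langle g_0,v\rangle+\frac R2\|v\|^2-\frac12\|g_N\|^2$, where $\mathcal B_W(g)=a\|g_0\|^2+\sum_ic_i(\|d_i\|^2+\langle d_i,h_i\rangle)$.

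The indefinite cross terms involve only $g_0$ and earlier $d_j$, since $\|h_i\|^2\leq\kappa_i(\|g_0\|^2+\sum_{j<i}\|d_j\|^2)$ for a constant $\kappa_i$ depending on row $i$ of $W$. Choose the weights backward as $c_N=1$ and $c_j=1+\sum_{i>j}c_i\kappa_i$. Young's inequality then gives $\mathcal B_W(g)\geq\|g_0\|^2+\frac12\sum_i\|d_i\|^2$; this is Lemma~\ref{lem:app-coercive-combination}.

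Large $t$ makes the gradient part positive definite, and a Schur complement with large $\sigma=R$ absorbs the $v$ terms. Every other constraint is polyhedral, so partial Slater needs only this positive definite slack, not $\lambda_{i,j}>0$. The construction works uniformly for every $W$, degenerate or not. As a byproduct it gives primal attainment. A primal Slater point would instead give dual attainment, not the primal attainment you assert at the end.
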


\subsection{A Direct Triangular Reformulation}

We rewrite the scaled negated compact dual multiplier as
$-r^{-1/2}\Lambda=TC$.
The difference from the suboptimality setting is only in the row-sum
condition: because no function value appears in either the objective or the
normalization, neither side of the multiplier matrix has a distinguished
terminal source.

\begin{lemma}[The $TC$ reformulation]\label{lem:lemni-TC}
The value in~\eqref{eq:disttograd-design} is
\begin{equation}\label{eq:lemni-TC-problem}
\begin{aligned}
 r_{D\rightarrow G}
 =\inf_{W,T,C,r}\quad &r\\
 \mathrm{s.t.}\quad
 &W\text{ is unit lower triangular},\\
 &T\text{ is upper triangular},\qquad T_{i,i}\geq0,\qquad T_{i,j}\leq0 \quad\forall i<j,\\
 &C\text{ is unit lower triangular},\qquad C_{i,j}\leq0\quad \forall i>j,\\
 &(TC)_{i,j}\leq0\quad \forall i\neq j,\qquad TC\mathbf1\geq0,
 \qquad C^\top T^\top\mathbf1\geq0,\\
 &\begin{pmatrix}
       \frac{\sqrt r}{2} &-\frac12\mathbf1^\top TC\\[1mm]
       -\frac12C^\top T^\top\mathbf1
       &\operatorname{sym}(W^\top TC)-\frac1{2\sqrt r}e_Ne_N^\top
   \end{pmatrix}\succeq0,\qquad r>0.
\end{aligned}
\end{equation}
For each fixed $W$, factoring $-r^{-1/2}\Lambda=TC$ identifies the feasible dual PEP solutions with those of the above reformulation.
\end{lemma}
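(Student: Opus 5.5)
The proof will follow Lemma~\ref{lem:ogm-TC} closely: substitute a triangular factorization of the scaled multiplier matrix into the compact dual of Proposition~\ref{prop:lemni-dual}, then check the equivalence in both directions.

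Start from a feasible tuple $(W,\Lambda,r)$ of the design problem obtained by inserting the dual~\eqref{eq:lemni-PEP-SDP-dual-clean} into~\eqref{eq:disttograd-design}. By Proposition~\ref{prop:lemni-duality}, the value $r_{D\rightarrow G}$ is the infimum of $r$ over such tuples. We may restrict to $r>0$. Any feasible $r$ is nonnegative from the $(0,0)$ entry of the semidefinite constraint. The case $r=0$ forces $\Lambda^\top\mathbf1=0$ and would need $-\operatorname{sym}(\Lambda^\top W)-\frac12e_Ne_N^\top\succeq0$. This cannot give a strict improvement, and in any case the infimum is unchanged by excluding a single value, since we can perturb $r$ upward while keeping feasibility. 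Now set $A=-r^{-1/2}\Lambda$. Because $r^{-1/2}>0$, the constraints $\Lambda_{i,j}\geq0$ ($i\neq j$), $\Lambda\mathbf1\leq0$ and $\Lambda^\top\mathbf1\leq0$ become: $A$ has nonpositive off-diagonal entries, $A\mathbf1\geq0$, and $\mathbf1^\top A\geq0$. These are exactly the hypotheses of Proposition~\ref{prop:M-matrix-factorization}, which supplies $A=TC$ with $T$ upper triangular, $T_{i,i}\geq0$, $T_{i,j}\leq0$ for $i<j$, and $C$ unit lower triangular with nonpositive subdiagonal entries. The scalar constraints $(TC)_{i,j}\leq0$ for $i\neq j$, $TC\mathbf1\geq0$ and $C^\top T^\top\mathbf1\geq0$ are then the restatements above.

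For the semidefinite constraint, substitute $\Lambda=-\sqrt r\,TC$. The $(0,0)$ entry is $r/2$, the off-diagonal block is $-\frac{\sqrt r}{2}\mathbf1^\top TC$, and the lower block is $\sqrt r\operatorname{sym}(C^\top T^\top W)-\frac12e_Ne_N^\top$. Here I use $\operatorname{sym}(\Lambda^\top W)=-\sqrt r\operatorname{sym}(C^\top T^\top W)$ and $\operatorname{sym}(C^\top T^\top W)=\operatorname{sym}(W^\top TC)$. Multiplying the whole matrix by $1/\sqrt r>0$, a positive scaling that preserves semidefiniteness, gives exactly the block matrix in~\eqref{eq:lemni-TC-problem}:
\[
\begin{pmatrix}\frac{\sqrt r}{2}&-\frac12\mathbf1^\top TC\\ -\frac12C^\top T^\top\mathbf1&\operatorname{sym}(W^\top TC)-\frac1{2\sqrt r}e_Ne_N^\top\end{pmatrix}.
\]

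Conversely, given a feasible $(W,T,C,r)$ with $r>0$, set $\Lambda=-\sqrt r\,TC$. Reversing the same computations recovers the sign, row-sum and column-sum constraints and the semidefinite constraint of~\eqref{eq:lemni-PEP-SDP-dual-clean}, so the two feasible sets correspond with the same $r$ and the infima agree. For fixed $W$, every dual-feasible $\Lambda$ factors this way, which gives the identification claimed in the last sentence of the lemma.

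The only delicate point is the $r>0$ and scaling bookkeeping, namely that dividing by $\sqrt r$ moves the factor onto the $e_Ne_N^\top$ term correctly. Everything else is routine substitution, since Proposition~\ref{prop:M-matrix-factorization} already handles possibly singular $\Lambda$.
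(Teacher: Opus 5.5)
Your proposal is correct and takes essentially the same route as the paper. You set $A=-r^{-1/2}\Lambda$, factor $A=TC$ via the $M$-matrix factorization proposition, substitute $\Lambda=-\sqrt r\,TC$ into the scalar constraints, divide the semidefinite constraint by $\sqrt r$, and reverse the substitution for the converse. Your extra bookkeeping is also correct: you invoke strong duality to write $r_{D\rightarrow G}$ as a joint infimum, and you note that excluding $r=0$ leaves the infimum unchanged because the $(0,0)$ entry can be increased. These points are ones the paper leaves implicit.
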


\begin{proof}
Let $(W,\Lambda,r)$ be a feasible algorithm and dual PEP certificate pair, and
set $A=-r^{-1/2}\Lambda$. Then $A$ has nonpositive off-diagonal entries, with
\[
    A\mathbf1\geq0,
    \qquad
    \mathbf1^\top A\geq0.
\]
Proposition~\ref{prop:M-matrix-factorization} gives a factorization $A=TC$
with the stated triangular forms and signs. Substituting
$\Lambda=-\sqrt r\,TC$ into the scalar constraints and dividing the
semidefinite constraint by $\sqrt r$ gives exactly the remaining constraints
in~\eqref{eq:lemni-TC-problem}. Conversely, given a feasible tuple
$(W,T,C,r)$, setting $\Lambda=-\sqrt r\,TC$ gives a feasible tuple.
\end{proof}

The positive semidefinite constraint is again a triangular PSD completion.

\begin{lemma}[Eliminating $W$]\label{lem:lemni-eliminate-W}
Fix admissible $T,C$ and $r>0$. There exists a unit lower triangular $W$
satisfying the semidefinite constraint in~\eqref{eq:lemni-TC-problem} if and
only if
\begin{equation}\label{eq:lemni-diagonal-conditions}
    (\mathbf1^\top Te_i)^2+(e_N^\top C^{-1}e_i)^2
    \leq 2\sqrt r\,T_{i,i}
    \qquad \forall 0\leq i\leq N.
\end{equation}
If $T$ is nonsingular and every inequality is tight, then this $W$ is unique.
\end{lemma}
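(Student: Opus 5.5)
The plan mirrors the proof of Lemma~\ref{lem:ogm-eliminate-W}, with the scalings of the two rank-one terms changed. First take the Schur complement of the semidefinite constraint in~\eqref{eq:lemni-TC-problem} with respect to the scalar entry $\frac{\sqrt r}{2}>0$. This gives
\[
\operatorname{sym}(W^\top TC)-\frac1{2\sqrt r}e_Ne_N^\top-\frac{1}{2\sqrt r}C^\top T^\top\mathbf1\mathbf1^\top TC\succeq0.
\]
Next apply the congruence by $C^{-1}$, which is invertible since $C$ is unit lower triangular. The condition becomes
\[
M:=\operatorname{sym}(T^\top WC^{-1})-\frac1{2\sqrt r}qq^\top-\frac1{2\sqrt r}pp^\top\succeq0,
\qquad q=C^{-\top}e_N,\quad p=T^\top\mathbf1.
\]
Here $T^\top WC^{-1}$ is lower triangular, and its diagonal is $T_{i,i}$ because $W$ and $C^{-1}$ are unit lower triangular. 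Since $q_i=e_N^\top C^{-1}e_i$ and $p_i=\mathbf1^\top Te_i$, nonnegativity of the diagonal of $M$ is exactly~\eqref{eq:lemni-diagonal-conditions}. This proves necessity.

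For sufficiency, assume~\eqref{eq:lemni-diagonal-conditions} and define a lower triangular $L$ by
\[
L_{i,i}=T_{i,i},\qquad L_{i,j}=\frac1{\sqrt r}(q_iq_j+p_ip_j)\quad (i>j).
\]
Then $\operatorname{sym}(L)-\frac1{2\sqrt r}(qq^\top+pp^\top)$ is diagonal with nonnegative entries, so it is PSD. It remains to find a unit lower triangular $X$ with $T^\top X=L$, and then set $W=XC$.

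The main obstacle is the case $T_{i,i}=0$. In that case~\eqref{eq:lemni-diagonal-conditions} forces $p_i=q_i=0$, so the $i$th row and column of $L$ vanish off the diagonal. I also need the $i$th column of $T$ to vanish. This follows from $p\ge0$, which comes from $C^\top T^\top\mathbf1\ge0$ together with $C^{-\top}\ge0$ (Proposition~\ref{prop:M-matrix-factorization}), combined with $T_{j,i}\le0$ for $j<i$. With that in hand, the $i$th row of $T^\top$ is zero, matching the zero $i$th row of $L$. I then solve $T^\top X=L$ row by row by forward substitution, choosing the $i$th row of $X$ to be $e_i^\top$ whenever $T_{i,i}=0$. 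Rows with $T_{i,i}>0$ are solvable and give unit diagonal, since $L_{i,i}=T_{i,i}$. Entries of $X$ in the degenerate rows do not affect $T^\top X$, because the corresponding columns of $T^\top$ vanish.

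For uniqueness, suppose $T$ is nonsingular and every inequality is tight. Then $M\succeq0$ has zero diagonal, so $M=0$. Hence $\operatorname{sym}(T^\top WC^{-1})$ is determined. A lower triangular matrix is determined by its symmetrization, so $T^\top WC^{-1}$ is fixed. Because $T$ and $C$ are invertible, this fixes $W$.
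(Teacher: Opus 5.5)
Your proof is correct and follows the paper's route. Necessity comes from the Schur complement followed by congruence by $C^{-1}$; sufficiency comes from the triangular completion $L$ solved via $T^\top X=L$ with $W=XC$; and uniqueness comes from a PSD slack with zero diagonal, which forces $T^\top WC^{-1}$ and hence $W$.

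One small correction: your remark that degenerate rows of $X$ do not affect $T^\top X$ is false in general. Column $i$ of $T^\top$ is row $i$ of $T$, and this row can be nonzero when $T_{i,i}=0$. For example, $N=2$, $r=1$ with $T_{0,1}=C_{1,0}=-1$, $T_{1,1}=T_{2,2}=1$ and all other off-diagonal entries zero is admissible and satisfies every diagonal condition. The remark is not needed, though. Forward substitution absorbs the contribution of those rows of $X$ in later equations, and each degenerate equation holds because row $i$ of both $T^\top$ and $L$ vanishes.
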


\begin{proof}
Taking the Schur complement and congruence by $C^{-1}$ gives
\[
 \operatorname{sym}(T^\top WC^{-1})
 -\frac1{2\sqrt r}qq^\top-\frac1{2\sqrt r}pp^\top\succeq0,
 \qquad q=C^{-\top}e_N,\quad p=T^\top\mathbf1.
\]
Its diagonal gives~\eqref{eq:lemni-diagonal-conditions}. Conversely, the
triangular completion argument in Lemma~\ref{lem:ogm-eliminate-W} applies
with entries $(q_iq_j+p_ip_j)/\sqrt r$ below the diagonal. In the tight case, this positive semidefinite matrix is forced to have zero diagonal and hence be exactly the zero matrix. This being zero forces a unique value of the lower triangular matrix $T^\top WC^{-1}$ and hence $W$.
\end{proof}

\subsection{Projection onto the Lemniscate Problem}

The optimization problem in scalar variables that replaces the simplex problem
of the preceding sections is the following path problem. The convention $0/0=0$ is used at repeated
endpoints.

\begin{lemma}[The Lemniscate Problem]\label{lem:lemni-scalar-prob}
For every $N\geq1$, the optimization problem
\begin{equation}\label{eq:lemni-scalar-prob}
\begin{aligned}
 \min_{1=\rho_0\geq\rho_1\geq\cdots\geq\rho_{N+1}=0}\quad
 &\max_{0\leq i\leq N}
 \frac{(\rho_i-\rho_{i+1})^2}
 {\rho_i(1-\rho_{i+1}^2)}
\end{aligned}
\end{equation}
has optimal value $1/\Omega_N$, attained uniquely by the sequence in
\eqref{eq:lemni-rate-recursion}. At this sequence, every term in the finite
maximum equals $1/\Omega_N$. 
\end{lemma}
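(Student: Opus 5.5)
Fix $\omega>0$ and consider the one-step map. Given $\rho_i\in(0,1]$, the constraint
\[
\frac{(\rho_i-\rho_{i+1})^2}{\rho_i(1-\rho_{i+1}^2)}\le\omega
\]
describes the admissible next points $\rho_{i+1}\in[0,\rho_i]$. Define $\psi_\omega(\rho)$ as the smallest $s\in[0,\rho]$ with $(\rho-s)^2\le\omega\rho(1-s^2)$. The plan is to mimic the proof of Lemma~\ref{lem:ogm-simplex-prob}, with the partial-sum envelope replaced by a pointwise lower envelope on $\rho_i$.

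\textbf{Step 1 (monotone greedy recursion).} Write $h(s)=(\rho-s)^2-\omega\rho(1-s^2)$. This is a convex quadratic in $s$ with $h(\rho)=-\omega\rho(1-\rho^2)\le 0$, so the feasible set $\{s\le\rho: h(s)\le 0\}$ is an interval $[\psi_\omega(\rho),\rho]$. I will show that $\psi_\omega$ is nondecreasing in $\rho$, and strictly increasing wherever $\psi_\omega(\rho)>0$.

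\begin{itemize}
\item The natural tool is the implicit function theorem applied to $(\rho-s)^2=\omega\rho(1-s^2)$ at the smaller root.
\item Alternatively, monotonicity follows from the feasible region in the $(\rho,s)$-plane having a suitable shape. Concretely, it suffices that $\partial_\rho h\le 0$ is not needed and that the region is "upward closed" in the right sense; I will check this directly.
\end{itemize}

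This monotonicity is the main obstacle. If a direct check is messy, I will fall back on the substitution $\rho=\tan(\alpha/2)$-type parametrizations. These are suggested by $\phi=(1+\rho^2)/(2\rho)$ and the lemniscate name, and may linearize the recursion.

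\textbf{Step 2 (envelope and lower bound).} Let $\rho$ be any feasible sequence whose objective is at most $\omega$. By induction, using monotonicity of $\psi_\omega$,
\[
\rho_i\ge\psi_\omega^{(i)}(1)=:\bar\rho_i(\omega).
\]
The terminal constraint $\rho_{N+1}=0$ therefore requires $\bar\rho_{N+1}(\omega)=0$, since $0=\rho_{N+1}\ge\bar\rho_{N+1}(\omega)\ge0$.

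\textbf{Step 3 (monotonicity in $\omega$).} I will argue that $\psi_\omega(\rho)$ is strictly decreasing in $\omega$ whenever it is positive. Hence $\omega_\star:=\min\{\omega:\bar\rho_{N+1}(\omega)=0\}$ is well defined. I will then check that at $\omega_\star=1/\Omega_N$ the greedy chain satisfies every equation in \eqref{eq:lemni-rate-recursion} with equality.

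\begin{itemize}
\item For the last step, $\rho_{N+1}=0$ means the term is $\rho_N\le\omega$.
\item At the threshold the greedy root reaches exactly zero at step $N+1$ and not earlier.
\end{itemize}

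This uses the existence and uniqueness from \cite{KimRyuDasGupta26}: the greedy chain at $\omega_\star$ is a strictly decreasing solution of \eqref{eq:lemni-rate-recursion}, so it coincides with the cited one and $\omega_\star=1/\Omega_N$. Consequently every feasible sequence has objective at least $1/\Omega_N$, and the bound is attained with all terms equal.

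\textbf{Step 4 (uniqueness).} Suppose the objective of $\rho$ equals $1/\Omega_N$. Then $\rho_i\ge\bar\rho_i$ for all $i$, and I will prove $\rho_i=\bar\rho_i$ by backward induction, as in Lemma~\ref{lem:ogm-simplex-prob}.

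\begin{itemize}
\item The base case is $\rho_{N+1}=0=\bar\rho_{N+1}$.
\item For the inductive step, suppose $\rho_{i+1}=\bar\rho_{i+1}$ and $\rho_i>\bar\rho_i$. Strict monotonicity gives $\psi(\rho_i)>\psi(\bar\rho_i)=\bar\rho_{i+1}=\rho_{i+1}$, which contradicts feasibility of the step from $\rho_i$ to $\rho_{i+1}$.
\item When $\bar\rho_{i+1}=0$, which happens only at $i=N$, I will instead use that the last term $\rho_N$ must be at most $\omega_\star=\bar\rho_N$.
\end{itemize}

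Hence $\rho=\bar\rho$, which gives uniqueness.
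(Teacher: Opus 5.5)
Your plan is the paper's argument in all essentials. Both use the greedy map $\psi_\omega$, nondecreasing in $\rho$ and nonincreasing in $\omega$ (strictly where positive), the envelope $\rho_i\ge\bar\rho_i$, a contradiction with $\rho_{N+1}=0$, and strict propagation for uniqueness. Your backward induction in Step~4 is the contrapositive of the paper's forward propagation and is fine.

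The ``main obstacle'' you leave open in Step~1 disappears if you work with $\Psi(a,b)=(a-b)^2/(a(1-b^2))$ rather than $h$. For $0\le b<a\le1$,
\[
\partial_b\log\Psi(a,b)=\frac{2(ab-1)}{(a-b)(1-b^2)}<0,
\qquad
\partial_a\log\Psi(a,b)=\frac{a+b}{a(a-b)}>0 .
\]
Hence $\psi_\omega(a)=0$ when $a\le\omega$, and otherwise $\psi_\omega(a)$ is the unique root of $\Psi(a,\cdot)=\omega$. If $s=\psi_\omega(a)>0$ and $a'>a$, then $\Psi(a',s)>\omega$, which forces $\psi_\omega(a')>s$. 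The same one-line argument gives strict decrease in $\omega$. This is exactly what the paper asserts. Your implicit-function route would also work, since $\partial_\rho h=(\rho-s)(\rho+s)/\rho>0$ at the root, but your second bullet there is not an argument.

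The real loose end is in Step~3. Monotonicity in $\omega$ alone does not show that $\omega_\star=\min\{\omega:\bar\rho_{N+1}(\omega)=0\}$ is attained. Nor does it show that the chain at $\omega_\star$ is tight at every step with $\bar\rho_N(\omega_\star)=\omega_\star$. Your bullet only gives $\bar\rho_N\le\omega_\star$, whereas \eqref{eq:lemni-rate-recursion} at $i=N$ needs equality.

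Both facts need continuity of $\omega\mapsto\bar\rho_i(\omega)$. This holds because $\psi_\omega(\rho)$ is the positive part of the smaller root of an explicit quadratic. With continuity, if $\bar\rho_N(\omega_\star)<\omega_\star$ (including the case that zero is hit early), then $\bar\rho_N(\omega)<\omega$ for some $\omega<\omega_\star$. Then $\bar\rho_{N+1}(\omega)=0$, contradicting minimality.

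The paper sidesteps this by starting from $m_\star=1/\Omega_N$ and the cited sequence. That sequence is visibly the $\psi_{m_\star}$-envelope, since every term is tight and $\rho_N^\star=m_\star$. So for $m<m_\star$, strict monotonicity alone gives $\bar\rho_{N+1}(m)>0$, with no threshold or continuity argument. Your route, once continuity is added, constructs a solution of \eqref{eq:lemni-rate-recursion} rather than assuming one. It therefore uses only the cited uniqueness, to identify $\omega_\star$ with $1/\Omega_N$, and not the cited existence.
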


\begin{proof}
For the sequence in~\eqref{eq:lemni-rate-recursion}, every term in
\eqref{eq:lemni-scalar-prob} is exactly $1/\Omega_N$, so the displayed value is
attained.

For $0\leq b\leq a\leq1$, write
\[
    \Psi(a,b)=\frac{(a-b)^2}{a(1-b^2)},
\]
with $\Psi(0,0)=\Psi(1,1)=0$. For fixed $a>0$, this quantity is
strictly decreasing in $b$, and for fixed $b<a$, it is strictly increasing in
$a$. Given $m>0$, let $\psi_m(a)$ be the smallest $b\in[0,a]$ satisfying
$\Psi(a,b)\leq m$. Thus $\psi_m(a)=0$ when $a\leq m$, while otherwise it is
the unique point satisfying $\Psi(a,\psi_m(a))=m$. The preceding monotonicity
shows that $\psi_m$ is nondecreasing in $a$ and nonincreasing in $m$, strictly
so whenever its value is positive.

Let $m$ be the objective value of any feasible chain and define its lower
envelope by
\[
    \bar\rho_0=1,
    \qquad
    \bar\rho_{i+1}=\psi_m(\bar\rho_i)
    \quad(0\leq i\leq N).
\]
Since $\Psi(\rho_i,\rho_{i+1})\leq m$, induction gives
$\rho_i\geq\bar\rho_i$ for every $i$. Put $m_\star=1/\Omega_N$. The sequence
in~\eqref{eq:lemni-rate-recursion} is exactly the envelope generated by
$\psi_{m_\star}$, and its last coordinate is zero. If $m<m_\star$, strict
monotonicity gives $\bar\rho_i>\rho_i^\star$ for every $i\geq1$ and, in
particular, $\bar\rho_{N+1}>0$. This contradicts
$0=\rho_{N+1}\geq\bar\rho_{N+1}$. Hence $m\geq m_\star$.

If $m=m_\star$, the same comparison gives $\rho_i\geq\rho_i^\star$. Any strict
inequality propagates strictly through the remaining envelope and again makes
$\rho_{N+1}>0$. Therefore the optimizing chain is unique and equals the
sequence in~\eqref{eq:lemni-rate-recursion}; every local term is then tight.
\end{proof}

Ultimately, we will solve~\eqref{eq:lemni-TC-problem}, finding the unique
optimal factors $T_L^\star,C_L^\star$ and optimal value
$r_\mathtt{Lemni}$. Namely, let $C_L^\star$ be the unit lower triangular matrix
with
\begin{equation}\label{eq:lemni-C-star}
    (C_L^\star)_{i,i}=1,
    \qquad
    (C_L^\star)_{i,j}=-\frac{\phi_{j+1}-\phi_j}{\phi_i}
    \quad \forall 0\leq j<i\leq N,
\end{equation}
and let $T_L^\star$ be the upper triangular matrix with the only
nonzero entries
\begin{equation}\label{eq:lemni-T-star}
\begin{aligned}
    (T_L^\star)_{i,i}
    &=\frac{1-\rho_{i+1}^2}{2\rho_{i+1}}
      \frac{\phi_i}{\phi_{i+1}}
      &&\forall 0\leq i<N,\\
    (T_L^\star)_{i,i+1}
    &=-\frac{1-\rho_{i+1}^2}{2\rho_{i+1}}
      &&\forall 0\leq i<N,\\
    (T_L^\star)_{N,N}&=\phi_N.
\end{aligned}
\end{equation}
The following lemma establishes that $r_\mathtt{Lemni}$ is a lower bound on
\eqref{eq:lemni-TC-problem} and that, if it is attained, the associated
factors must equal $T_L^\star,C_L^\star$. It also shows that every
condition in~\eqref{eq:lemni-diagonal-conditions} is then tight, so
Lemma~\ref{lem:lemni-eliminate-W} will uniquely determine $W$. Doing so, one can (re)discover the Lemniscate method, given in Appendix~\ref{subsec:discovering-lemni}.

\begin{lemma}[Projection onto the Lemniscate Problem]\label{lem:lemni-TC-projection}
Let $W,T,C,r$ be feasible in~\eqref{eq:lemni-TC-problem}. Then
\begin{equation}\label{eq:lemni-projection-bound}
    \max_{0\leq i\leq N}
    \frac{(\mathbf1^\top Te_i)^2
    +(e_N^\top C^{-1}e_i)^2}{2T_{i,i}}
    \geq \frac1{\Omega_N}.
\end{equation}
Here the ratio is defined as zero when $T_{i,i}=0$, since
\eqref{eq:lemni-diagonal-conditions} forces its numerator to vanish.
Consequently, $r\geq r_\mathtt{Lemni}$. If $r=r_\mathtt{Lemni}$, then
$T=T_L^\star$, $C=C_L^\star$, and every inequality in
\eqref{eq:lemni-diagonal-conditions} is tight.
\end{lemma}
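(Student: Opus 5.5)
We will follow the template of Lemmas~\ref{lem:ogm-TC-projection} and~\ref{lem:ogmg-TC-projection}: from a feasible $(W,T,C,r)$, extract a chain $1=\rho_0\geq\rho_1\geq\cdots\geq\rho_{N+1}=0$ whose local terms $\Psi(\rho_i,\rho_{i+1})$ are bounded by the ratios in~\eqref{eq:lemni-projection-bound}. Lemma~\ref{lem:lemni-scalar-prob} then gives the bound $1/\Omega_N$. Combining this with~\eqref{eq:lemni-diagonal-conditions}, which says every ratio is at most $\sqrt r$, yields $\sqrt r\geq1/\Omega_N$, that is, $r\geq r_\mathtt{Lemni}$.

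Set $p=T^\top\mathbf1\geq0$ and $q=C^{-\top}e_N\geq0$; note $q_N=1$. The chain should interpolate between a ``distance'' mass carried by $p$ and a ``gradient'' mass carried by $q$. This is the two-sided analogue of the OGM partial sums (built from $p$ and $b$ with $Tb=e_N$) and the OGM-G partial sums $D_i$ (built from $q$ and $b$).

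\textbf{Constructing the chain.} We take a nonnegative solution $b$ of $Tb=e_N$ and a nonnegative solution $a$ of $T^\top a$-type equations, obtained by backward and forward substitution as in the earlier lemmas. We then let $\rho_i$ be a normalized tail quantity, such as $\rho_i=\sum_{j\geq i}(\text{mass})_j$ suitably scaled so that $\rho_0=1$ and $\rho_{N+1}=0$. The goal is to show, using Lemma~\ref{lem:app-inverse-bound} on the trailing principal blocks $(TC)_{i:N,i:N}$ (as in~\eqref{eq:ogmg-singular-inverse-bound}), two inequalities:
\[
\frac{p_i^2}{2T_{i,i}}\geq\frac{(\rho_i-\rho_{i+1})^2}{2\rho_i}\cdot(\cdots),\qquad
\frac{q_i^2}{2T_{i,i}}\geq(\cdots).
\]
Their sum should then dominate $\Psi(\rho_i,\rho_{i+1})$. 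The factor $1-\rho_{i+1}^2$ suggests parametrizing by $\phi_i=(1+\rho_i^2)/(2\rho_i)$, which is why $C_L^\star$ involves $\phi$-differences. So I would first try to define $\phi_i$ directly from $T,C$, for example via $u_N$-type entries of the inverse of the trailing block $(TC)_{i:N,i:N}$, which Lemma~\ref{lem:app-inverse-bound} makes monotone. Then $\rho_i$ is recovered as the root of $\phi=(1+\rho^2)/(2\rho)$ in $(0,1]$. The key algebraic input is the AM--GM step $\frac{x^2+y^2}{2}\geq xy$ applied to the two numerator terms. I expect this step to turn the sum into the product form $\frac{(\rho_i-\rho_{i+1})^2}{\rho_i(1-\rho_{i+1}^2)}$ after using the identity $\phi_{i+1}-\phi_i=\frac{(\rho_i-\rho_{i+1})(1-\rho_i\rho_{i+1})}{2\rho_i\rho_{i+1}}$.

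\textbf{Main obstacle.} The main obstacle is finding the right definition of the chain from $T,C$ so that the local inequality is exact at $T_L^\star,C_L^\star$. The first attempt would be to compute $p$, $q$ and the trailing-block inverses for the candidate factors~\eqref{eq:lemni-C-star}--\eqref{eq:lemni-T-star}, read off which expressions equal $\rho_i$ or $\phi_i$, and then prove the corresponding inequalities in general via Lemma~\ref{lem:app-inverse-bound}. Degenerate indices with $T_{i,i}=0$ are handled as before: $p_i=q_i=0$, the $i$th column of $T$ vanishes, and consequently $\rho_i=\rho_{i+1}$ with a zero term.

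\textbf{Uniqueness.} If $r=r_\mathtt{Lemni}$, Lemma~\ref{lem:lemni-scalar-prob} forces the chain to be the sequence~\eqref{eq:lemni-rate-recursion}, with every term tight. Since all $\rho_i$ are then distinct, every $T_{i,i}>0$. Equality in AM--GM fixes $p_i$ and $q_i$, and tightness of~\eqref{eq:lemni-diagonal-conditions} fixes $T_{i,i}$. Equality in the inverse-entry bounds, which are sums of nonpositive off-diagonal terms as in~\eqref{eq:ogm-cut-r}, kills all entries of $T$ beyond the first superdiagonal. It also pins $C$ through $q=C^{-\top}e_N$ together with the row-sum structure, giving $T=T_L^\star$ and $C=C_L^\star$.
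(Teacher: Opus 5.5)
Your proposal has a genuine gap. The construction of the chain is the whole content of this lemma, and you leave it as placeholders. Worse, the one concrete mechanism you commit to cannot work.

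Applying $\frac{x^2+y^2}{2}\geq xy$ to the numerator replaces $p_i^2+q_i^2$ by $2p_iq_i$, and this product can vanish at every index for feasible factors. Take $N=1$, $C=I$, $T=\begin{pmatrix}1&-1\\0&1\end{pmatrix}$. Every constraint of~\eqref{eq:lemni-TC-problem} holds once $r\geq 1/4$ (the completion exists by Lemma~\ref{lem:lemni-eliminate-W}). Yet $p=(1,0)$ and $q=(0,1)$, so $\max_i p_iq_i/T_{i,i}=0$ and no positive lower bound can follow.

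The step is lossy even at $T_L^\star,C_L^\star$. For $N=1$ one computes $p=(1/\sqrt2,\sqrt2-1)$ and $q=(1-1/\sqrt2,1)$. Hence $\max_i p_iq_i/T_{i,i}=1-1/\sqrt2<\sqrt2-1=1/\Omega_1$. Your uniqueness step ``equality in AM--GM fixes $p_i,q_i$'' would also force the false identity $p_i=q_i$.

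Your guess that $\phi_i$ should be read off the trailing-block entry $u_N=D_i=\sum_{j\geq i}q_jb_j$ is correct only in the equality case. In general $D_0\neq1$: for example $D_0=1/T_{N,N}>1$ when $C=I$ and $T_{N,N}<1$. A chain built from $D$ alone also ignores the $p$-mass that the numerator charges.

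What is missing is a \emph{two-dimensional} path.

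First you need $b\geq0$ with $Tb=e_N$. Unlike OGM there is no $TC\mathbf1\geq e_N$ here, so its existence needs the range identity $\operatorname{range}(TC)=\operatorname{range}((TC)^\top)$ from Lemma~\ref{lem:app-inverse-bound}. Then set $E_i=\sum_{j\leq i}p_jb_j$ and $D_i=\sum_{j\geq i}q_jb_j$.

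Two cuts are required:
\begin{itemize}
\item $T_{i,i}b_i\leq E_i$, from expanding $Tb=e_N$ as in~\eqref{eq:ogm-cut-r};
\item $b_i\leq D_i$, from Lemma~\ref{lem:app-inverse-bound} applied to the trailing block $(TC)_{i:N,i:N}$ with sources $e_N$ and $e_i$.
\end{itemize}
Multiplying $p_i^2+q_i^2\leq 2T_{i,i}m$ by $b_i^2$ then gives
\[
\frac{(E_i-E_{i-1})^2+(D_i-D_{i+1})^2}{2E_iD_i}\leq m .
\]
The numerator must be kept as a \emph{sum}. It is the squared chord between consecutive points $(E_{i-1},D_i)$ and $(E_i,D_{i+1})$ of a monotone path from $(0,D_0)$ to $(1,0)$.

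In polar coordinates, the law of cosines together with AM--GM applied to the \emph{radii}, $R_i/R_{i+1}+R_{i+1}/R_i\geq2$, removes the radial dependence. Setting $\rho_i=\tan(\vartheta_i/2)$ turns the angular bound into exactly $\Psi(\rho_i,\rho_{i+1})$, with $\rho_0=\tan(\pi/4)=1$ regardless of scale.

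In the equality case all radii equal one. That is what yields $D_i=1/\phi_i$, $b_i=D_i$ and $T_{i,i}b_i=E_i$, and hence bidiagonal $T=T_L^\star$. Pinning down $C$ still needs two further arguments, because here $TC\mathbf1\geq0$ is only an inequality:
\begin{itemize}
\item $C\mathbf1=b$, via the monotone ratios $y_i/b_i$ for $y=C\mathbf1$ and $q^\top y=q^\top b=1$;
\item a column-by-column induction on $-C_{i,j}/b_i$ using $C^\top q=e_N$.
\end{itemize}
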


\begin{proof}
Put
\[
    p=T^\top\mathbf1,
    \qquad
    q=C^{-\top}e_N.
\]
Since $C^{-1}\geq0$ and $C^\top p\geq0$, we have $p\geq0$; also $q\geq0$.
If $T_{i,i}=0$, then the $i$th diagonal condition in
\eqref{eq:lemni-diagonal-conditions} gives $p_i=q_i=0$. As in the preceding
sections, nonnegative column sums imply that the entire $i$th column of $T$ is
zero. The columns indexed by $i$ with $T_{i,i}>0$ are linearly independent, so
\[
    \operatorname{range}(T^\top)
    =\{z:z_i=0\text{ whenever }T_{i,i}=0\}.
\]
Hence $q$ lies in the range of $T^\top$, and $C^\top q=e_N$ shows that
$e_N$ lies in the range of $(TC)^\top$. Lemma~\ref{lem:app-inverse-bound}
gives
\[
\operatorname{range}(TC)=\operatorname{range}((TC)^\top).
\]
Since $C$ is invertible, $e_N$ therefore lies in the range of $T$.
Backward substitution gives a vector $b\geq0$ satisfying $Tb=e_N$, where we
set $b_i=0$ whenever $T_{i,i}=0$.

Here $p,q,b\geq0$, and $q_N=1$. Let the left-hand side
of~\eqref{eq:lemni-projection-bound} be $m$. By
\eqref{eq:lemni-diagonal-conditions}, $m\leq\sqrt r$. Define
\[
    E_{-1}=0,
    \qquad
    E_i=\sum_{j=0}^i p_jb_j
    \quad \forall 0\leq i\leq N,
\]
and
\[
    D_i=\sum_{j=i}^N b_jq_j
    \quad \forall 0\leq i\leq N,
    \qquad
    D_{N+1}=0.
\]
Then
\[
    0=E_{-1}\leq E_0\leq\cdots\leq E_N=1,
    \qquad
    D_0\geq D_1\geq\cdots\geq D_N>D_{N+1}=0.
\]
The same cut expansion as~\eqref{eq:ogm-cut-r} gives
\begin{equation}\label{eq:lemni-forward-cut}
    T_{i,i}b_i\leq E_i
    \qquad \forall 0\leq i\leq N.
\end{equation}

For the reverse inequality, fix $i$ with $T_{i,i}>0$ and consider
the trailing principal block $A_i=(TC)_{i:N,i:N}$. The vectors
\[
u=C_{i:N,i:N}^{-1}b_{i:N},
\qquad
v=\frac1{T_{i,i}}C_{i:N,i:N}^{-1}e_i
\]
solve $A_iu=e_N$ and $A_iv=e_i$. Their relevant coordinates are
$u_i=b_i$, $u_N=D_i$, and $v_i=1/T_{i,i}$.
Applying Lemma~\ref{lem:app-inverse-bound} with the two source indices in the
order $N,i$ gives
$b_i\leq\min\{D_i,1/T_{i,i}\}$. When
$T_{i,i}=0$, our choice gives $b_i=0$, so in all cases
\begin{equation}\label{eq:lemni-reverse-cut}
    b_i\leq D_i
    \qquad \forall 0\leq i\leq N.
\end{equation}

For every $i$, the definition of $m$ gives $p_i^2+q_i^2\leq2T_{i,i}m$.
Multiplying by $b_i^2$ and using
\[
    E_i-E_{i-1}=p_ib_i,
    \qquad
    D_i-D_{i+1}=b_iq_i,
\]
together with~\eqref{eq:lemni-forward-cut}--\eqref{eq:lemni-reverse-cut},
yields
\begin{equation}\label{eq:lemni-path-inequality}
    \frac{(E_i-E_{i-1})^2+(D_i-D_{i+1})^2}
    {2E_iD_i}
    \leq m
    \qquad \forall 0\leq i\leq N.
\end{equation}
If $E_iD_i=0$, the preceding inequality forces the $i$th
step to have zero length (by our convention that $0/0=0$). Thus it suffices to focus only on indices with $E_iD_i>0$.

For $0\leq i\leq N+1$, write $(E_{i-1},D_i)=R_i(\cos\vartheta_i,\sin\vartheta_i)$.
The monotonicity above gives
\[
    \vartheta_0=\frac\pi2
    \geq\vartheta_1\geq\cdots\geq
    \vartheta_{N+1}=0.
\]
The $i$th numerator in~\eqref{eq:lemni-path-inequality} is the squared
distance between the consecutive points with indices $i$ and $i+1$, while its
denominator is
\[
    2E_iD_i
    =2R_iR_{i+1}\cos\vartheta_{i+1}\sin\vartheta_i.
\]
Consequently,
\[
\begin{aligned}
&\frac{(E_i-E_{i-1})^2+(D_i-D_{i+1})^2}{2E_iD_i}\\
&\qquad=
\frac{R_i/R_{i+1}+R_{i+1}/R_i
-2\cos(\vartheta_i-\vartheta_{i+1})}
{2\cos\vartheta_{i+1}\sin\vartheta_i}\\
&\qquad\geq
\frac{1-\cos(\vartheta_i-\vartheta_{i+1})}
{\cos\vartheta_{i+1}\sin\vartheta_i},
\end{aligned}
\]
where the final inequality uses
$R_i/R_{i+1}+R_{i+1}/R_i\geq2$.

Set
\[
    \rho_i=\tan\frac{\vartheta_i}{2}
    \qquad \forall 0\leq i\leq N+1.
\]
Then
\[
    1=\rho_0\geq\rho_1\geq\cdots\geq\rho_{N+1}=0,
\]
and the half-angle identities give
\[
    \frac{1-\cos(\vartheta_i-\vartheta_{i+1})}
    {\cos\vartheta_{i+1}\sin\vartheta_i}
    =
    \frac{(\rho_i-\rho_{i+1})^2}
    {\rho_i(1-\rho_{i+1}^2)}.
\]
Combining this identity with~\eqref{eq:lemni-path-inequality} and
Lemma~\ref{lem:lemni-scalar-prob} proves
\[
\sqrt r\geq m\geq\frac1{\Omega_N},
\]
which is~\eqref{eq:lemni-projection-bound} and gives
$r\geq r_\mathtt{Lemni}$.

Suppose now that $r=r_\mathtt{Lemni}$. The unique scalar optimizer has every
local term equal to $1/\Omega_N$. Since each such term is bounded above by the
corresponding path ratio, which is in turn bounded above by $m\leq\sqrt r$,
every inequality in the preceding chain is tight, and the scalar sequence is
the unique sequence in~\eqref{eq:lemni-rate-recursion}. Equality in the radial inequality gives
$R_i=R_{i+1}$ for every $i$. Since $R_0=D_0$ and
$R_{N+1}=E_N=1$, every radius equals one. The half-angle formulas therefore
give
\begin{equation}\label{eq:lemni-equality-path}
    E_{i-1}=\frac{1-\rho_i^2}{1+\rho_i^2},
    \qquad
    D_i=\frac{2\rho_i}{1+\rho_i^2}=\frac1{\phi_i}.
\end{equation}
Equality in~\eqref{eq:lemni-forward-cut}--\eqref{eq:lemni-reverse-cut} gives
\[
    b_i=D_i=\frac1{\phi_i},
    \qquad
    T_{i,i}b_i=E_i.
\]
In particular every $b_i$ is positive, so $T_{i,i}>0$ for every
$i$ and $T$ is nonsingular. These identities give the diagonal in
\eqref{eq:lemni-T-star}.

Equality in the forward cut forces every entry of $T$ above the first
superdiagonal to vanish. The equation $Tb=e_N$ then gives
\[
    T_{i,i+1}
    =-T_{i,i}\frac{b_i}{b_{i+1}}
    =-\frac{1-\rho_{i+1}^2}{2\rho_{i+1}},
\]
which completes~\eqref{eq:lemni-T-star}.

It remains to determine $C$. Put $y=C\mathbf1$. Since $T$ is a
nonsingular $M$-matrix and $Ty=TC\mathbf1\geq0$, one has
$y\geq0$. The upper bidiagonal form and $Tb=e_N$ show that
\[
    \frac{y_0}{b_0}\geq\frac{y_1}{b_1}\geq\cdots\geq\frac{y_N}{b_N}\geq0.
\]
Here $y_0=b_0=1$. Moreover, $q^\top y=e_N^\top\mathbf1=1$ and $q^\top b=D_0=b_0=1$.
Every $q_i b_i=D_i-D_{i+1}$ is positive, so the preceding monotonicity forces
$y=b$.

For $i>j$, set $\gamma_{i,j}=-C_{i,j}/b_i\geq0$. The identity
$C\mathbf1=b$ and the off-diagonal inequalities $(TC)_{i,j}\leq0$ give
\[
    \sum_{j<i}\gamma_{i,j}=\phi_i-1,
    \qquad
    \gamma_{i+1,j}\leq\gamma_{i,j}
    \quad \forall j<i<N.
\]
We prove by induction on $j$ that
$\gamma_{i,j}=\phi_{j+1}-\phi_j$ for every $i>j$. The row $i=1$ gives
$\gamma_{1,0}=\phi_1-1=\phi_1-\phi_0$. If the assertion is known for columns
below $j$, the row $j+1$ identity gives
$\gamma_{j+1,j}=\phi_{j+1}-\phi_j$, and monotonicity gives
$\gamma_{i,j}\leq\phi_{j+1}-\phi_j$ for all $i>j$. On the other hand,
$C^\top q=e_N$ and $D_i=b_i$ give, with $b_{N+1}=0$,
\[
    q_j=\sum_{i=j+1}^N\gamma_{i,j}(b_i-b_{i+1}),
    \qquad
    q_j=(\phi_{j+1}-\phi_j)b_{j+1},
    \qquad
    \sum_{i=j+1}^N(b_i-b_{i+1})=b_{j+1}.
\]
The strict $\rho$-sequence makes every weight positive, so equality forces
$\gamma_{i,j}=\phi_{j+1}-\phi_j$ for every $i>j$. Thus
\[
    C_{i,j}=-b_i(\phi_{j+1}-\phi_j)
    =-\frac{\phi_{j+1}-\phi_j}{\phi_i},
\]
which is~\eqref{eq:lemni-C-star}. Thus
$T=T_L^\star$ and $C=C_L^\star$. Every local objective
inequality is also tight, so every inequality in
\eqref{eq:lemni-diagonal-conditions} is tight.
\end{proof}

As with the previous settings, these optimal factors $T_L^\star$ and $C_L^\star$ are also the unique limit of any sequence of factors converging towards optimality. Appendix~\ref{app:limiting-factors} provides a proof of this stability result.

\begin{proposition}[Uniqueness of Limiting Optimal Factors]
\label{prop:lemni-limiting-factors}
Let $W$ be fixed. If $(W,T^{(k)},C^{(k)},r^{(k)})$ is feasible in
\eqref{eq:lemni-TC-problem} and $r^{(k)}\to r_\mathtt{Lemni}$, then $T^{(k)}\to T_L^\star$ and $C^{(k)}\to C_L^\star$.
\end{proposition}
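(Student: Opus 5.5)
We argue by compactness plus the uniqueness part of Lemma~\ref{lem:lemni-TC-projection}. Suppose the claim fails. Then along a subsequence, $(T^{(k)},C^{(k)})$ stays at distance at least $\varepsilon>0$ from $(T_L^\star,C_L^\star)$.

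\textbf{Step 1: bounding $C$.} The first task is to show the factors are bounded, so that a further subsequence converges. For $C$ we would use the bounds $q=C^{-\top}e_N\geq 0$ and $C^{-1}\geq0$. A cleaner route is the row-sum constraint $TC\mathbf1\geq0$ combined with the structure of $T$. Alternatively, we would bound the entries of $C$ through the feasible quantities $b$, $D_i$ and $y=C\mathbf1$ that appear in the projection proof. Since $C$ is unit lower triangular with nonpositive off-diagonal entries, $y\le\mathbf1$ entrywise. We would bound $|C_{i,j}|$ via $C^{-1}\ge I$ together with an upper bound on $C^{-1}$ coming from $q$ and the diagonal conditions~\eqref{eq:lemni-diagonal-conditions} with $r^{(k)}$ bounded.

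\textbf{Step 2: bounding $T$ (main obstacle).} For $T$, the diagonal conditions give $(\mathbf1^\top Te_i)^2\le 2\sqrt{r^{(k)}}T_{i,i}$, but they do not directly bound $T_{i,i}$ from above. Boundedness of $T$ is the main obstacle: in principle a diagonal entry could blow up while the column sum stays small, through large negative off-diagonal entries. We would attack this with the cut quantities $E_i,D_i$ and the inequalities~\eqref{eq:lemni-forward-cut}--\eqref{eq:lemni-reverse-cut}, which give $T_{i,i}b_i\le E_i\le 1$. The difficulty is then to show $b_i$ stays bounded below. We expect this to follow from a quantitative version of the path argument: as $m\to 1/\Omega_N$, the angles $\vartheta_i$ and radii $R_i$ are forced near their optimal values, because the objective in Lemma~\ref{lem:lemni-scalar-prob} is continuous and its minimizer is unique on a compact set. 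Hence $D_i$ stays near $1/\phi_i>0$ and $E_i$ stays near its optimal value. The remaining question is whether $b_i$ itself is bounded below; the slack in $b_i\le D_i$ must tend to zero, since the tightness chain is continuous. This would bound $T_{i,i}\lesssim 1/b_i$. From there, the off-diagonal entries are bounded using the column-sum bound $p\ge0$ together with $T_{i,j}\le0$.

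\textbf{Step 3: passing to the limit.} With boundedness in hand, extract a convergent subsequence $(T^{(k)},C^{(k)},r^{(k)})\to(\bar T,\bar C,r_\mathtt{Lemni})$. Every constraint of~\eqref{eq:lemni-TC-problem} is closed, and $\bar C$ remains unit lower triangular. The only condition involving $W$ is the semidefinite constraint, and $W$ is fixed, so the limit is feasible. The uniqueness clause of Lemma~\ref{lem:lemni-TC-projection} then forces $\bar T=T_L^\star$ and $\bar C=C_L^\star$, contradicting the $\varepsilon$-separation. This closes the proof. The same template mirrors the OGM and OGM-G stability propositions.
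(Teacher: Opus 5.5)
This is essentially the paper's proof, and your ``tightness chain'' step is made precise the same way: the path ratio in~\eqref{eq:lemni-path-inequality} is squeezed between the local term $(\rho^{(k)}_i-\rho^{(k)}_{i+1})^2/\bigl(\rho^{(k)}_i(1-(\rho^{(k)}_{i+1})^2)\bigr)\to1/\Omega_N$ and $m^{(k)}\bigl(T^{(k)}_{i,i}b^{(k)}_i/E^{(k)}_i\bigr)\bigl(b^{(k)}_i/D^{(k)}_i\bigr)\le m^{(k)}\to1/\Omega_N$, which together with $R^{(k)}_i\to1$ (from the vanishing radial slack) gives $b^{(k)}_i\to1/\phi_i$, bounds $T^{(k)}$ through the nonnegative column sums, and then closes by compactness and the equality case of Lemma~\ref{lem:lemni-TC-projection}. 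The one adjustment is that your Step~1 must come after Step~2: once $\operatorname{diag}(T^{(k)})>0$ one has $(T^{(k)})^{-1}\ge0$, so $C^{(k)}\mathbf1=(T^{(k)})^{-1}T^{(k)}C^{(k)}\mathbf1\ge0$ forces $\sum_{j<i}|C^{(k)}_{i,j}|\le1$, whereas your $q$-based alternative only controls the last row of $(C^{(k)})^{-1}$ and does not by itself bound the remaining rows of $C^{(k)}$.
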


\subsection{Proof of Theorem~\ref{thm:disttograd}}
Fix a method $W$, and let $v_{D\rightarrow G}(W)$ denote the exact value of its
distance-to-gradient PEP~\eqref{eq:lemni-PEP-SDP}. By Proposition~\ref{prop:lemni-duality}, there is a
sequence $(\Lambda^{(k)},r^{(k)})$ feasible in
\eqref{eq:lemni-PEP-SDP-dual-clean} for this fixed $W$, with $r^{(k)}>0$ and
$r^{(k)}\to v_{D\rightarrow G}(W)$. Factor
\[
    -\frac1{\sqrt{r^{(k)}}}\Lambda^{(k)}=T^{(k)}C^{(k)}
\]
using Proposition~\ref{prop:M-matrix-factorization}. Lemma~\ref{lem:lemni-TC}
makes $(W,T^{(k)},C^{(k)},r^{(k)})$ feasible in
\eqref{eq:lemni-TC-problem}, and Lemma~\ref{lem:lemni-TC-projection} gives
\[
    \sqrt{r^{(k)}}\geq\frac1{\Omega_N}.
\]
Passing to the limit shows $ v_{D\rightarrow G}(W)\geq\frac1{\Omega_N^2}=r_\mathtt{Lemni}$.

Suppose equality holds. Proposition~\ref{prop:lemni-limiting-factors} gives
\[
    T^{(k)}\to T_L^\star,
    \qquad
    C^{(k)}\to C_L^\star.
\]
Passing to the limit in the semidefinite constraint of
\eqref{eq:lemni-TC-problem} shows that
$(W,T_L^\star,C_L^\star,r_\mathtt{Lemni})$ is feasible. The matrix
$T_L^\star$ is nonsingular, and
every inequality in~\eqref{eq:lemni-diagonal-conditions} is tight. Hence
Lemma~\ref{lem:lemni-eliminate-W} shows that $W$ is the unique unit lower
triangular matrix satisfying the semidefinite constraint for these factors.

The guarantee in~\cite{KimRyuDasGupta26} gives $v_{D\rightarrow G}(W_\mathtt{Lemni})\leq r_\mathtt{Lemni}$.
Together with the preceding lower bound, this gives
$v_{D\rightarrow G}(W_\mathtt{Lemni})=r_\mathtt{Lemni}$. Moreover, every optimal $W$ must equal $W_\mathtt{Lemni}$.

\section{Conclusion}
Our main theorems establish the optimality and uniqueness of OGM, OGM-G, and Lemniscate methods for their respective settings among fixed-step first-order methods with $d\geq N+2$. Previously, only the optimality of OGM was known. Fundamental to each proof is the decomposition of the associated (scaled) PEP multiplier matrix into a product of upper and lower triangular matrices $TC$. In each setting, this common reusable approach enabled subsequent reductions and simplifications.

When reducing final suboptimality, the optimal $C$ factor is the identity matrix, making the resulting optimization concern only upper triangular dual multiplier (proof) matrices $\Lambda = -T$. For both considered settings reducing final squared gradient norms, the optimal $C$ matrix was more complex, leading to more complex proof structures. Regardless, in each case, the task of finding the optimal method reduced to a (relatively) simple optimization problem from which the optimal method, and its uniqueness, could be readily extracted.

Such decompositions of dual multiplier matrices may prove useful beyond the considered smooth convex setting. In particular, once the optimal $C$ matrix is identified, one can reduce the search for optimal algorithms and proofs to a simpler search over upper triangular matrices $T$. This is similar to the constructive approach of Drori and Taylor~\cite{DT20}, which gave a constructive algorithm design procedure for reducing suboptimality using only upper triangular dual multiplier matrices. Fixing our discovered $C_G^\star$ and $C_L^\star$ may enable new variants of the constructive approach for minimization of the squared gradient norm.

{\small \paragraph{Acknowledgment.} Benjamin Grimmer was supported in this work as a fellow of the Alfred P.~Sloan Foundation and by the Air Force Office of Scientific Research (AFOSR) under grant FA9550261B093. Chanwoo Park was supported by an Amazon AI PhD Fellowship and the Korea Foundation for Advanced Studies (KFAS).}

{
\small
\bibliographystyle{unsrt}
\bibliography{main}

@book{BermanPlemmons94,
  author    = {Berman, Abraham and Plemmons, Robert J.},
  title     = {Nonnegative Matrices in the Mathematical Sciences},
  series    = {Classics in Applied Mathematics},
  volume    = {9},
  publisher = {Society for Industrial and Applied Mathematics},
  address   = {Philadelphia, PA},
  year      = {1994},
  doi       = {10.1137/1.9781611971262},
  url       = {https://doi.org/10.1137/1.9781611971262}
}

@article{DGVR24,
  author  = {Das Gupta, Shuvomoy and Van Parys, Bart P. G. and Ryu, Ernest K.},
  title   = {Branch-and-Bound Performance Estimation Programming: A Unified Methodology for Constructing Optimal Optimization Methods},
  journal = {Mathematical Programming},
  volume  = {204},
  number  = {1--2},
  pages   = {567--639},
  year    = {2024},
  doi     = {10.1007/s10107-023-01973-1},
  url     = {https://doi.org/10.1007/s10107-023-01973-1}
}

@article{DR17,
  author  = {Drori, Yoel},
  title   = {The Exact Information-Based Complexity of Smooth Convex Minimization},
  journal = {Journal of Complexity},
  volume  = {39},
  pages   = {1--16},
  year    = {2017},
  doi     = {10.1016/j.jco.2016.11.001},
  url     = {https://doi.org/10.1016/j.jco.2016.11.001}
}

@article{DT14,
  author  = {Drori, Yoel and Teboulle, Marc},
  title   = {Performance of First-Order Methods for Smooth Convex Minimization: A Novel Approach},
  journal = {Mathematical Programming},
  volume  = {145},
  number  = {1--2},
  pages   = {451--482},
  year    = {2014},
  doi     = {10.1007/s10107-013-0653-0},
  url     = {https://doi.org/10.1007/s10107-013-0653-0}
}

@article{DT20,
  author  = {Drori, Yoel and Taylor, Adrien B.},
  title   = {Efficient First-Order Methods for Convex Minimization: A Constructive Approach},
  journal = {Mathematical Programming},
  volume  = {184},
  number  = {1--2},
  pages   = {183--220},
  year    = {2020},
  doi     = {10.1007/s10107-019-01410-2},
  url     = {https://doi.org/10.1007/s10107-019-01410-2}
}

@article{GSW26,
  author  = {Grimmer, Benjamin and Shu, Kevin and Wang, Alex L.},
  title   = {Beyond Minimax Optimality: A Subgame Perfect Gradient Method},
  journal = {Mathematical Programming},
  year    = {2026},
  doi     = {10.1007/s10107-026-02362-0},
  url     = {https://doi.org/10.1007/s10107-026-02362-0}
}

@article{KF16,
  author  = {Kim, Donghwan and Fessler, Jeffrey A.},
  title   = {Optimized First-Order Methods for Smooth Convex Minimization},
  journal = {Mathematical Programming},
  volume  = {159},
  number  = {1--2},
  pages   = {81--107},
  year    = {2016},
  doi     = {10.1007/s10107-015-0949-3},
  url     = {https://doi.org/10.1007/s10107-015-0949-3}
}

@article{KF21,
  author  = {Kim, Donghwan and Fessler, Jeffrey A.},
  title   = {Optimizing the Efficiency of First-Order Methods for Decreasing the Gradient of Smooth Convex Functions},
  journal = {Journal of Optimization Theory and Applications},
  volume  = {188},
  number  = {1},
  pages   = {192--219},
  year    = {2021},
  doi     = {10.1007/s10957-020-01770-2},
  url     = {https://doi.org/10.1007/s10957-020-01770-2}
}

@inproceedings{KOPR23,
  author    = {Kim, Jaeyeon and Ozdaglar, Asuman and Park, Chanwoo and Ryu, Ernest K.},
  title     = {Time-Reversed Dissipation Induces Duality Between Minimizing Gradient Norm and Function Value},
  booktitle = {Advances in Neural Information Processing Systems},
  volume    = {36},
  pages     = {23389--23440},
  year      = {2023},
  url       = {https://proceedings.neurips.cc/paper_files/paper/2023/hash/4947292b9f5e7d4ab792fa35537f8b96-Abstract-Conference.html}
}

@article{KPDOR24,
  author        = {Kim, Jaeyeon and Park, Chanwoo and Ozdaglar, Asuman and Diakonikolas, Jelena and Ryu, Ernest K.},
  title         = {Mirror Duality in Convex Optimization},
  journal       = {arXiv preprint arXiv:2311.17296},
  year          = {2023},
  eprint        = {2311.17296},
  archivePrefix = {arXiv},
  primaryClass  = {math.OC},
  doi           = {10.48550/arXiv.2311.17296},
  url           = {https://arxiv.org/abs/2311.17296}
}

@article{KimRyuDasGupta26,
  author        = {Kim, Heechang and Ryu, Ernest K. and Das Gupta, Shuvomoy},
  title         = {A Domain-Specific Harness for End-to-End Automation of Optimization Research},
  journal       = {arXiv preprint arXiv:2608.07407},
  year          = {2026},
  eprint        = {2608.07407},
  archivePrefix = {arXiv},
  primaryClass  = {math.OC},
  doi           = {10.48550/arXiv.2608.07407},
  url           = {https://arxiv.org/abs/2608.07407}
}

@article{Shu2026Hduality,
  author        = {Shu, Kevin and Wang, Alex L.},
  title         = {A Unified Theory of {H}-Duality in First-Order Methods},
  journal       = {arXiv preprint arXiv:2609.03281},
  year          = {2026},
  eprint        = {2609.03281},
  archivePrefix = {arXiv},
  primaryClass  = {math.OC},
  doi           = {10.48550/arXiv.2609.03281},
  url           = {https://arxiv.org/abs/2609.03281}
}

@article{THG17,
  author  = {Taylor, Adrien B. and Hendrickx, Julien M. and Glineur, Fran{\c{c}}ois},
  title   = {Smooth Strongly Convex Interpolation and Exact Worst-Case Performance of First-Order Methods},
  journal = {Mathematical Programming},
  volume  = {161},
  number  = {1--2},
  pages   = {307--345},
  year    = {2017},
  doi     = {10.1007/s10107-016-1009-3},
  url     = {https://doi.org/10.1007/s10107-016-1009-3}
}

@article{YG26,
  author        = {Yoon, TaeHo and Grimmer, Benjamin},
  title         = {A Theory of Composition and Duality of Extremal Optimal Fixed-Point Algorithms},
  journal       = {arXiv preprint arXiv:2605.02231},
  year          = {2026},
  eprint        = {2605.02231},
  archivePrefix = {arXiv},
  primaryClass  = {math.OC},
  doi           = {10.48550/arXiv.2605.02231},
  url           = {https://arxiv.org/abs/2605.02231}
}

@inproceedings{YKSR24,
  author    = {Yoon, TaeHo and Kim, Jaeyeon and Suh, Jaewook J. and Ryu, Ernest K.},
  title     = {Optimal Acceleration for Minimax and Fixed-Point Problems Is Not Unique},
  booktitle = {Proceedings of the 41st International Conference on Machine Learning},
  series    = {Proceedings of Machine Learning Research},
  volume    = {235},
  pages     = {57244--57314},
  publisher = {PMLR},
  year      = {2024},
  url       = {https://proceedings.mlr.press/v235/yoon24b.html}
}

@article{YRG25,
  author        = {Yoon, TaeHo and Ryu, Ernest K. and Grimmer, Benjamin},
  title         = {{H}-Invariance Theory: A Complete Characterization of Minimax Optimal Fixed-Point Algorithms},
  journal       = {Mathematical Programming},
  year          = {2026},
  note          = {Accepted for publication},
  eprint        = {2511.14915},
  archivePrefix = {arXiv},
  primaryClass  = {math.OC},
  doi           = {10.48550/arXiv.2511.14915},
  url           = {https://arxiv.org/abs/2511.14915}
}

@article{ZG26,
  author        = {Zoll, Aaron and Grimmer, Benjamin},
  title         = {A Complete Characterization of Optimal Subgradient Methods for {L}ipschitz Convex Minimization},
  journal       = {arXiv preprint arXiv:2607.19240},
  year          = {2026},
  eprint        = {2607.19240},
  archivePrefix = {arXiv},
  primaryClass  = {math.OC},
  doi           = {10.48550/arXiv.2607.19240},
  url           = {https://arxiv.org/abs/2607.19240}
}

@article{beck2009fast,
  author  = {Beck, Amir and Teboulle, Marc},
  title   = {A Fast Iterative Shrinkage-Thresholding Algorithm for Linear Inverse Problems},
  journal = {SIAM Journal on Imaging Sciences},
  volume  = {2},
  number  = {1},
  pages   = {183--202},
  year    = {2009},
  doi     = {10.1137/080716542},
  url     = {https://doi.org/10.1137/080716542}
}

@article{d2021acceleration,
  author  = {d'Aspremont, Alexandre and Scieur, Damien and Taylor, Adrien B.},
  title   = {Acceleration Methods},
  journal = {Foundations and Trends in Optimization},
  volume  = {5},
  number  = {1--2},
  pages   = {1--245},
  year    = {2021},
  doi     = {10.1561/2400000036},
  url     = {https://doi.org/10.1561/2400000036}
}

@article{jang2025computer,
  author  = {Jang, Uijeong and Das Gupta, Shuvomoy and Ryu, Ernest K.},
  title   = {Computer-Assisted Design of Accelerated Composite Optimization Methods: {OptISTA}},
  journal = {Mathematical Programming},
  year    = {2025},
  doi     = {10.1007/s10107-025-02258-5},
  url     = {https://doi.org/10.1007/s10107-025-02258-5}
}

@inproceedings{lee2021geometric,
  author    = {Lee, Jongmin and Park, Chanwoo and Ryu, Ernest K.},
  title     = {A Geometric Structure of Acceleration and Its Role in Making Gradients Small Fast},
  booktitle = {Advances in Neural Information Processing Systems},
  volume    = {34},
  pages     = {11999--12012},
  year      = {2021},
  url       = {https://proceedings.neurips.cc/paper/2021/hash/647c722bf90a49140184672e0d3723e3-Abstract.html}
}

@book{nemirovsky1983problem,
  author     = {Nemirovsky, Arkadi S. and Yudin, David B.},
  title      = {Problem Complexity and Method Efficiency in Optimization},
  translator = {Dawson, E. R.},
  series     = {Wiley-Interscience Series in Discrete Mathematics},
  publisher  = {Wiley},
  address    = {Chichester and New York},
  year       = {1983},
  isbn       = {978-0-471-10345-5}
}

@article{nesterov2012make,
  author  = {Nesterov, Yurii},
  title   = {How to Make the Gradients Small},
  journal = {OPTIMA: Mathematical Optimization Society Newsletter},
  volume  = {88},
  pages   = {10--11},
  year    = {2012},
  url     = {https://www.mathopt.org/Optima-Issues/optima88.pdf}
}

@article{park2021factor,
  author  = {Park, Chanwoo and Park, Jisun and Ryu, Ernest K.},
  title   = {Factor-$\sqrt{2}$ Acceleration of Accelerated Gradient Methods},
  journal = {Applied Mathematics \& Optimization},
  volume  = {88},
  number  = {3},
  pages   = {77},
  year    = {2023},
  doi     = {10.1007/s00245-023-10047-9},
  url     = {https://doi.org/10.1007/s00245-023-10047-9}
}

@article{pepit2022,
  author  = {Goujaud, Baptiste and Moucer, C{\'e}line and Glineur, Fran{\c{c}}ois and Hendrickx, Julien M. and Taylor, Adrien B. and Dieuleveut, Aymeric},
  title   = {{PEPit}: Computer-Assisted Worst-Case Analyses of First-Order Optimization Methods in {P}ython},
  journal = {Mathematical Programming Computation},
  volume  = {16},
  pages   = {337--367},
  year    = {2024},
  doi     = {10.1007/s12532-024-00259-7},
  url     = {https://doi.org/10.1007/s12532-024-00259-7}
}

@book{rockafellar1970,
  author    = {Rockafellar, R. Tyrrell},
  title     = {Convex Analysis},
  series    = {Princeton Mathematical Series},
  volume    = {28},
  publisher = {Princeton University Press},
  address   = {Princeton, NJ},
  year      = {1970},
  isbn      = {978-0-691-08069-7},
  doi       = {10.1515/9781400873173},
  url       = {https://doi.org/10.1515/9781400873173}
}

@article{taylor2017composite,
  author  = {Taylor, Adrien B. and Hendrickx, Julien M. and Glineur, Fran{\c{c}}ois},
  title   = {Exact Worst-Case Performance of First-Order Methods for Composite Convex Optimization},
  journal = {SIAM Journal on Optimization},
  volume  = {27},
  number  = {3},
  pages   = {1283--1313},
  year    = {2017},
  doi     = {10.1137/16M108104X},
  url     = {https://doi.org/10.1137/16M108104X}
}

@article{taylordrori2022,
  author  = {Taylor, Adrien B. and Drori, Yoel},
  title   = {An Optimal Gradient Method for Smooth Strongly Convex Minimization},
  journal = {Mathematical Programming},
  volume  = {199},
  number  = {1--2},
  pages   = {557--594},
  year    = {2023},
  doi     = {10.1007/s10107-022-01839-y},
  url     = {https://doi.org/10.1007/s10107-022-01839-y}
}

@article{upadhyaya2026optimalfirstordermethodsmooth,
  author        = {Upadhyaya, Manu and Thomsen, Daniel Berg and Dieuleveut, Aymeric and Taylor, Adrien B.},
  title         = {An Optimal First-Order Method for Smooth and Strongly Convex Composite Optimization and Its Stationary Limit},
  journal       = {arXiv preprint arXiv:2605.22929},
  year          = {2026},
  eprint        = {2605.22929},
  archivePrefix = {arXiv},
  primaryClass  = {math.OC},
  doi           = {10.48550/arXiv.2605.22929},
  url           = {https://arxiv.org/abs/2605.22929}
}
}
\clearpage
\appendix
\section{Deferred Derivations of Dual PEPs}

\subsection{Proof of Proposition~\ref{prop:distance-dual}}
\label{app:distance-dual}
Consider the dual formulation~\eqref{eq:PEP-SDP-dual-with-identity} and
write $\Lambda=(\lambda_{i,j})_{i,j\in\mathcal I}$. Agreement of the constant
terms in the affine identity~\eqref{eq:main-identity} gives $\sigma=r$. Next we match the coefficients of
$f_i-f_\star$ between the two sides. The left-hand side has coefficient zero if $i<N$ and coefficient $-1$ if $i=N$. The right-hand side only has $f_i-f_\star$ occur in $\mathcal{Q}_{i,j}$ and $\mathcal{Q}_{j,i}$ terms, with coefficient $1$ and $-1$ respectively. Hence agreement of these coefficients requires for $i<N$ and $i=N$ that
\begin{equation} \label{eq:f_i-coefficient-cond}
    0 = \sum_{j\in\mathcal{I}_\star} (\lambda_{i,j} - \lambda_{j,i}), \qquad -1 = \sum_{j\in\mathcal{I}_\star} (\lambda_{N,j} - \lambda_{j,N}).
\end{equation}
Combining this with the diagonal convention~\eqref{eq:distance-lambda-diagonal} gives
\[
    (\lambda_{\star,i})_{i\in\mathcal I}
    =-\Lambda^\top\mathbf1,
    \qquad
    (\lambda_{i,\star})_{i\in\mathcal I}
    =-\Lambda\mathbf1-e_N.
\]
Thus nonnegativity of the multipliers involving $\star$ is equivalent to
\[
    \Lambda^\top\mathbf1\leq0,
    \qquad
    \Lambda\mathbf1\leq-e_N.
\]
Finally, we consider the Gram coefficients in the identity~\eqref{eq:main-identity}. The left-hand side has no dependence on $G$, so we require the right-hand side vanish. In particular, recalling our convention that $W_{i,i}=1$ and $L=1$, the right-hand side can be expanded to
\begin{align*}
    0 =&\sum_{i,j\in\mathcal{I}} \lambda_{i,j}\left(-\sum_{k=0}^N (W_{j,k} - W_{i,k})\langle g_j, g_k\rangle - \frac{1}{2}\|g_i\|^2 + \frac{1}{2}\|g_j\|^2\right) \\
    &+\sum_{i\in\mathcal{I}} \lambda_{\star,i}\left(-\sum_{k=0}^{i} W_{i,k}\langle g_i,g_k\rangle + \langle g_i, x_0-x_\star\rangle + \frac{1}{2}\|g_i\|^2\right) \\
    &+\sum_{i\in\mathcal{I}} \lambda_{i,\star}\left(- \frac{1}{2}\|g_i\|^2\right) \\
    &-\frac{\sigma}{2}\|x_0-x_\star\|^2 \\
    &+\langle Z,G \rangle
\end{align*}
with each line above corresponding to the Gram terms in $\mathcal{Q}_{i,j},\mathcal{Q}_{\star,i},\mathcal{Q}_{i,\star},\mathcal{D},$ and the matrix inner product.
From the conditions~\eqref{eq:f_i-coefficient-cond}, the explicitly separated $\|g_i\|^2$ terms cancel for $i<N$, while those involving $\|g_N\|^2$ have total coefficient $+1/2$. Rearranging for $\langle Z,G\rangle$ changes this coefficient to $-1/2$ and gives
\begin{align*}
    \langle Z,G\rangle = &-\frac{1}{2}\|g_N\|^2 + \sum_{i,j\in\mathcal{I}} \lambda_{i,j}\left(\sum_{k=0}^N (W_{j,k} - W_{i,k})\langle g_j, g_k\rangle\right) \\
    &+\sum_{i\in\mathcal{I}} \lambda_{\star,i}\left(\sum_{k=0}^{i} W_{i,k}\langle g_i,g_k\rangle - \langle g_i, x_0-x_\star\rangle\right) + \frac{\sigma}{2}\|x_0-x_\star\|^2\\
    &= \left\langle \begin{pmatrix}
            \frac{\sigma}{2} & \frac{1}{2}\mathbf{1}^\top\Lambda\\
            \frac{1}{2}\Lambda^\top \mathbf{1} & -\operatorname{sym}\left(\Lambda^\top W\right)-\frac{1}{2}e_Ne_N^\top
    \end{pmatrix}, G\right\rangle.
\end{align*}
So any feasible solution must have
\[
Z=
\begin{pmatrix}
    \frac r2 & \frac12\mathbf1^\top\Lambda\\[1mm]
    \frac12\Lambda^\top\mathbf1
    &-\operatorname{sym}(\Lambda^\top W)-\frac12e_Ne_N^\top
\end{pmatrix}.
\]
Consequently, the identity constraint~\eqref{eq:PEP-SDP-dual-with-identity} allows us to eliminate $\sigma,\lambda_{i,\star},\lambda_{\star,i},Z$, resulting in the claimed, simplified dual statement~\eqref{eq:PEP-SDP-dual-clean}. Positive semidefiniteness of the displayed
block gives $r\geq0$. Conversely, from any feasible $(\Lambda,r)$ in
\eqref{eq:PEP-SDP-dual-clean}, define $\sigma=r$, recover the
multipliers involving $\star$ from the preceding display, and take $Z$ to be
the displayed block matrix. These choices recover~\eqref{eq:main-identity}
coefficient by coefficient.

\subsection{Proof of Proposition~\ref{prop:gradient-dual}}
\label{app:gradient-dual}
Consider~\eqref{eq:ogmg-main-identity} and write
$\Lambda=(\lambda_{i,j})_{i,j\in\mathcal I}$. Agreement of the constant terms
gives $\sigma=r$. Neither the objective nor the normalization contains
$\|x_0-x_\star\|^2$, so the corresponding diagonal entry of $Z$ must vanish.
Positive semidefiniteness then forces the entire corresponding row and column
to vanish. Denote the remaining bottom-right block of $Z$ by $Z_g$. Matching the coefficients of
$\langle g_i,x_0-x_\star\rangle$ therefore gives
\[
    \lambda_{\star,i}=0
    \qquad \forall i\in\mathcal I.
\]
Matching the function-value coefficients and using
\eqref{eq:gradient-lambda-diagonal} then gives
\[
    (\lambda_{i,\star})_{i\in\mathcal I}
    =-\Lambda\mathbf1,
    \qquad
    \Lambda^\top\mathbf1=-r e_0.
\]
Thus nonnegativity of the remaining multipliers involving $\star$ is
equivalent to $\Lambda\mathbf1\leq0$.
Collecting the gradient Gram coefficients in the identity~\eqref{eq:ogmg-main-identity}, following the same process as Proposition~\ref{prop:distance-dual}, gives
\[
Z_g=
-\operatorname{sym}(\Lambda^\top W)
-\frac r2e_0e_0^\top
-\frac12e_Ne_N^\top.
\]
This proves that the identity formulation implies
\eqref{eq:ogmg-PEP-SDP-dual-clean}. Conversely, suppose
$(\Lambda,r)$ is feasible in~\eqref{eq:ogmg-PEP-SDP-dual-clean}. Since
\[
    -r=\mathbf1^\top\Lambda\mathbf1\leq0,
\]
one has $r\geq0$. Define $\sigma=r$, set
$\lambda_{\star,i}=0$ and
$\lambda_{i,\star}=-(\Lambda\mathbf1)_i$, and take
$Z$ as the matrix with zero first row and column, setting the remaining block as $Z_g$. These choices recover~\eqref{eq:ogmg-main-identity}
coefficient by coefficient.

\subsection{Proof of Proposition~\ref{prop:lemni-dual}}
\label{app:lemni-dual}
Consider~\eqref{eq:lemni-main-identity} and write
$\Lambda=(\lambda_{i,j})_{i,j\in\mathcal I}$. Agreement of the constant terms
gives $\sigma=r$. Matching the function-value coefficients and using
\eqref{eq:lemni-lambda-diagonal} gives
\[
    (\lambda_{\star,i})_{i\in\mathcal I}
    =-\Lambda^\top\mathbf1,
    \qquad
    (\lambda_{i,\star})_{i\in\mathcal I}
    =-\Lambda\mathbf1.
\]
Thus nonnegativity of the multipliers involving $\star$ is equivalent to
\[
    \Lambda^\top\mathbf1\leq0,
    \qquad
    \Lambda\mathbf1\leq0.
\]
Collecting the Gram coefficients in the identity~\eqref{eq:lemni-main-identity}, following the same process as Proposition~\ref{prop:distance-dual}, gives
\[
Z=
\begin{pmatrix}
    \frac r2 & \frac12\mathbf1^\top\Lambda\\[1mm]
    \frac12\Lambda^\top\mathbf1
    &-\operatorname{sym}(\Lambda^\top W)-\frac12e_Ne_N^\top
\end{pmatrix}.
\]
This proves that the identity formulation implies
\eqref{eq:lemni-PEP-SDP-dual-clean}. Positive semidefiniteness of the
displayed block gives $r\geq0$. Conversely, from any feasible $(\Lambda,r)$
in~\eqref{eq:lemni-PEP-SDP-dual-clean}, define $\sigma=r$,
recover the multipliers involving $\star$ from the preceding display, and
take $Z$ to be the displayed block matrix. These choices recover
\eqref{eq:lemni-main-identity} coefficient by coefficient.

\section{Verification of Strong Duality Claims}
\subsection{A common coercive combination}

\begin{lemma}\label{lem:app-coercive-combination}
Fix a method by its unit lower triangular matrix $W$. For arbitrary vectors
$g_0,\ldots,g_N$, set
\[
 d_i=g_i-g_0,
 \qquad
 h_i=\sum_{j=0}^{i-1}W_{i,j}g_j
 \quad \forall 1\le i\le N.
\]
There are constants $a,c_1,\ldots,c_N>0$, depending only on $W$, such that
\begin{equation}\label{eq:app-positive-form}
 \mathcal B_W(g):=
 a\|g_0\|^2+
 \sum_{i=1}^Nc_i\bigl(\|d_i\|^2+\langle d_i,h_i\rangle\bigr)
 \ge \|g_0\|^2+\frac12\sum_{i=1}^N\|d_i\|^2.
\end{equation}
\end{lemma}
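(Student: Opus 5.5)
We prove \eqref{eq:app-positive-form} by choosing the constants $c_N, c_{N-1},\dots,c_1$ and then $a$ one at a time. The argument rests on two facts.

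The first fact concerns the structure of $h_i$. Since $g_j=g_0+d_j$, we can write
\[
 h_i=\Bigl(\sum_{j=0}^{i-1}W_{i,j}\Bigr)g_0+\sum_{j=1}^{i-1}W_{i,j}d_j .
\]
So $h_i$ is a fixed linear combination of $g_0$ and of the earlier differences $d_1,\dots,d_{i-1}$ only. The cross term $\langle d_i,h_i\rangle$ therefore couples $d_i$ only to variables with smaller index.

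The second fact is Young's inequality:
\[
 \langle d_i,h_i\rangle\ge-\tfrac14\|d_i\|^2-\|h_i\|^2 .
\]
By Cauchy--Schwarz, $\|h_i\|^2\le K_i\bigl(\|g_0\|^2+\sum_{j<i}\|d_j\|^2\bigr)$, where $K_i$ is a constant depending only on the row $W_{i,\cdot}$. Substituting, each summand satisfies
\[
 c_i\bigl(\|d_i\|^2+\langle d_i,h_i\rangle\bigr)\ \ge\ \tfrac34 c_i\|d_i\|^2-c_iK_i\Bigl(\|g_0\|^2+\sum_{j<i}\|d_j\|^2\Bigr).
\]

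The constants are then chosen so that each term dominates all the negative contributions charged to it.

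1. Set $c_N=1$. Its positive contribution $\tfrac34\|d_N\|^2$ already exceeds the required $\tfrac12\|d_N\|^2$.
2. Go downward in $i$. Once $c_{i+1},\dots,c_N$ are fixed, the coefficient of $\|d_i\|^2$ receives the penalty $\sum_{k>i}c_kK_k$ from the later summands. Choose
\[
 c_i=\tfrac43\Bigl(\tfrac12+\sum_{k>i}c_kK_k\Bigr),
\]
so that the net coefficient of $\|d_i\|^2$ is at least $\tfrac12$. This $c_i$ is positive and depends only on $W$.
3. Finally set $a=1+\sum_{k}c_kK_k$. This absorbs every penalty on $\|g_0\|^2$ and leaves net coefficient $1$.

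Summing the per-summand bounds with these constants yields \eqref{eq:app-positive-form}.

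There is no real obstacle in this argument. The one point needing care is the ordering: each $c_i$ must be fixed after all larger-index constants, so that the penalties on $\|d_i\|^2$ are known when $c_i$ is chosen. The strict triangularity of the $h_i$ is what makes this backward choice well defined.
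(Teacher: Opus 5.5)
Your proposal is correct and follows essentially the same route as the paper: you write $h_i$ in terms of $g_0$ and the earlier $d_j$, bound $\|h_i\|^2$ by Cauchy--Schwarz, apply Young's inequality, and fix $c_N,\dots,c_1$ backward and then $a$ so that every penalty is absorbed. The only difference is the weighting in Young's inequality. You use $\langle d_i,h_i\rangle\ge-\tfrac14\|d_i\|^2-\|h_i\|^2$, whereas the paper uses $\|d_i\|^2+\langle d_i,h_i\rangle\ge\tfrac12\|d_i\|^2-\tfrac12\|h_i\|^2$; this changes only the explicit constants.
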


\begin{proof}
Write $\alpha_i=\sum_{j<i}W_{i,j}$ and
$\kappa_i=\alpha_i^2+\sum_{j=1}^{i-1}W_{i,j}^2$. Since
$h_i=\alpha_i g_0+\sum_{j=1}^{i-1}W_{i,j}d_j$, Cauchy--Schwarz gives
\[
 \|h_i\|^2\le
 \kappa_i\left(\|g_0\|^2+
 \sum_{j=1}^{i-1}\|d_j\|^2\right).
\]
Choose $c_N=1$ and, backward for $j=N-1,\ldots,1$,
\[
 c_j=1+\sum_{i=j+1}^Nc_i\kappa_i,
 \qquad
 a=1+\frac12\sum_{i=1}^Nc_i\kappa_i.
\]
Young's inequality and the preceding bound imply
\[
 \|d_i\|^2+\langle d_i,h_i\rangle
 \ge\frac12\|d_i\|^2-\frac12\|h_i\|^2.
\]
After summing, the coefficient of $\|g_0\|^2$ is
$a-\frac12\sum_i c_i\kappa_i=1$, while that of $\|d_j\|^2$ is
$\frac12(c_j-\sum_{i>j}c_i\kappa_i)=1/2$. This is
\eqref{eq:app-positive-form}.
\end{proof}

We use the standard partial-Slater theorem for finite-dimensional conic
programs~\cite[Theorem~28.2]{rockafellar1970}. Because all constraints other
than the positive semidefinite constraint are polyhedral, it is enough to
construct a feasible dual point whose semidefinite slack lies in the relative
interior of its PSD face.

\subsection{Proof of Proposition~\ref{prop:distance-duality}}
\label{app:distance-duality}
Let $v=x_0-x_\star$. The primal~\eqref{eq:PEP-SDP} is feasible
(take the zero trace) and has finite objective. Indeed,
$\|v\|\leq\sqrt2$, $\|g_i\|\leq\|x_i-x_\star\|$, and the fixed-step relations
bound the iterates and gradients inductively; smoothness then gives
$f_N-f_\star\leq\|x_N-x_\star\|^2/2$.

Choose $a,c_i$ from Lemma~\ref{lem:app-coercive-combination}. For $t>0$, set
\[
\Lambda_t
=-ta\,e_0e_0^\top
-t\sum_{i=1}^Nc_i(e_i-e_0)(e_i-e_0)^\top
-e_Ne_N^\top.
\]
Its off-diagonal entries are nonnegative and $\Lambda_t\mathbf1
    =\Lambda_t^\top\mathbf1
    =-ta\,e_0-e_N$,
so all scalar constraints in~\eqref{eq:PEP-SDP-dual-clean} hold at any level
$R>0$. The quadratic form represented by its semidefinite block is
\begin{equation}\label{eq:app-distance-slack}
\begin{aligned}
&t\mathcal B_W(g)+\langle g_N,h_N\rangle
+\frac12\|g_N\|^2\\
&\qquad
-\langle ta g_0+g_N,v\rangle
+\frac R2\|v\|^2.
\end{aligned}
\end{equation}
The coordinates $(g_0,d_1,\ldots,d_N)$ are related invertibly to
$(g_0,\ldots,g_N)$, and~\eqref{eq:app-positive-form} is coercive in the
former. Hence, for all sufficiently large $t$, the first line of
\eqref{eq:app-distance-slack} is positive definite in the gradients. A Schur
complement then makes the whole form positive definite after increasing $R$.
Thus the compact dual has a feasible point with a positive-definite
semidefinite block. Partial Slater gives equality of the primal and dual
values.

\subsection{Proof of Proposition~\ref{prop:gradient-duality}}
\label{app:gradient-duality}
The zero trace is feasible for~\eqref{eq:ogmg-PEP-SDP}. Its
objective is finite: smooth convexity gives
$\|g_0\|^2\leq2(f_0-f_\star)\leq2$, and $\|g_i-g_0\|\leq\|x_i-x_0\|
 \leq\sum_{j<i}|W_{i,j}|\,\|g_j\|$
bounds the remaining gradients inductively.

Choose $a,c_i$ from Lemma~\ref{lem:app-coercive-combination}. For $t>0$, set
$R=2ta$ and
\[
\Lambda_t
=-R\,e_0e_0^\top
-t\sum_{i=1}^Nc_i(e_i-e_0)(e_i-e_0)^\top.
\]
Then $\Lambda_t\mathbf1=-R e_0$ and $\Lambda_t^\top\mathbf1=-R e_0$.
Moreover, the off-diagonal entries of $\Lambda$ are nonnegative, so all scalar constraints in
\eqref{eq:ogmg-PEP-SDP-dual-clean} hold at level $R$. The quadratic form
represented by its semidefinite constraint is
\[
    t\mathcal B_W(g)-\frac12\|g_N\|^2.
\]
By~\eqref{eq:app-positive-form}, this form is positive definite for
sufficiently large $t$. Thus the compact dual has a feasible point with a positive-definite
semidefinite block. Partial Slater gives equality of the primal and dual
values.

\subsection{Proof of Proposition~\ref{prop:lemni-duality}}
\label{app:lemni-duality}
Let $v=x_0-x_\star$. The zero trace is feasible for
\eqref{eq:lemni-PEP-SDP}, and its objective is finite. Indeed,
$\|v\|\leq\sqrt2$, smooth convexity gives $\|g_0\|\leq\|v\|$, and the
fixed-step relations bound the remaining iterates and gradients inductively.

Choose $a,c_i$ from Lemma~\ref{lem:app-coercive-combination}. For $t>0$, set
\[
\Lambda_t
=-ta\,e_0e_0^\top
-t\sum_{i=1}^Nc_i(e_i-e_0)(e_i-e_0)^\top.
\]
Its off-diagonal entries are nonnegative and $\Lambda_t\mathbf1
    =\Lambda_t^\top\mathbf1
    =-ta\,e_0$,
so all scalar constraints in~\eqref{eq:lemni-PEP-SDP-dual-clean} hold at any
level $R>0$. The quadratic form represented by its semidefinite block is
\begin{equation}\label{eq:app-lemni-slack}
    t\mathcal B_W(g)
    -ta\langle g_0,v\rangle
    +\frac R2\|v\|^2
    -\frac12\|g_N\|^2.
\end{equation}
By~\eqref{eq:app-positive-form}, the gradient part is positive definite for
all sufficiently large $t$. A Schur complement then makes
\eqref{eq:app-lemni-slack} positive definite after increasing $R$. Thus the
compact dual has a feasible point with a positive-definite semidefinite block.
Partial Slater gives equality of the primal and dual values.

\section{Deferred Proofs of Matrix Factorizations and Bounds} \label{app:LA}

\subsection{Proof of Proposition~\ref{prop:M-matrix-factorization}}
We argue by induction on the dimension. Write
\[
A=\begin{pmatrix}A_0&u\\v^\top&a\end{pmatrix}.
\]
The row-sum condition gives $a\geq0$. If $a=0$, the nonnegative last row and column sums, together with the nonpositive off-diagonal entries, force $u=v=0$. We then factor $A_0$ by induction and append a zero diagonal entry to $T$ and a unit diagonal entry to $C$.

Suppose $a>0$, and set $B=A_0-a^{-1}uv^\top$. It has nonpositive off-diagonal entries. Writing $\rho=A\mathbf1$ and $\kappa=A^\top\mathbf1$, one has
\[
B\mathbf1=\rho_{0:N-1}-\frac{\rho_N}{a}u\geq0,
\qquad
\mathbf1^\top B=\kappa_{0:N-1}^\top-\frac{\kappa_N}{a}v^\top\geq0.
\]
Factor $B=T_0C_0$ by induction and set
\[
T=\begin{pmatrix}T_0&u\\0&a\end{pmatrix},
\qquad
C=\begin{pmatrix}C_0&0\\a^{-1}v^\top&1\end{pmatrix}.
\]
Then $A=TC$ and the stated triangular signs hold. Since $C$ is unit lower triangular with nonpositive entries below the diagonal,
\[
C^{-1}=I+(I-C)+(I-C)^2+\cdots\geq0,
\]
and hence
\[
\mathbf1^\top T=\mathbf1^\top AC^{-1}\geq0.
\]
If $A$ is nonsingular, every diagonal entry of $T$ is positive, the recursion is unique, and the same finite Neumann-series argument gives $T^{-1},C^{-1}\geq0$.

Suppose now that $A\mathbf1=e_0$. When $a>0$, the last row sum is zero, so $v^\top\mathbf1+a=0$, while the displayed formula for $B\mathbf1$ gives $B\mathbf1=e_0$. Thus the inductive factor may be chosen with $C_0\mathbf1=e_0$, and the appended row of $C$ has sum zero. When $a=0$, $u=v=0$, and we may choose the entries below the diagonal in the appended row of $C$ to sum to $-1$. This again gives $C\mathbf1=e_0$. Consequently $Te_0=TC\mathbf1=e_0$, and therefore $T_{0,0}=1$. Finally, if $\mathbf1^\top A=e_0^\top$, then
$\mathbf1^\top T=e_0^\top C^{-1}=e_0^\top$, which also gives $T_{0,0}=1$.

\subsection{Proof of Lemma~\ref{lem:app-inverse-bound}}
Set $H=\operatorname{sym}(A)$. It is a symmetric matrix with nonpositive off-diagonal entries and $H\mathbf1\geq0$. Moreover,
\[
x^\top Hx
=\sum_i(H\mathbf1)_i x_i^2
 +\sum_{i<j}(-H_{i,j})(x_i-x_j)^2
\geq0.
\]
Let the connected components of the graph with edges $H_{i,j}<0$ index a simultaneous block decomposition of $A$. Indeed, if two indices lie in different components, then both corresponding off-diagonal entries of $A$ vanish.

On a connected component $K$, if $(H\mathbf1)_i>0$ for some $i\in K$, the displayed quadratic form is positive definite on that component. Hence $A_{K,K}$ is nonsingular and is a nonsingular $M$-matrix. If $H\mathbf1$ vanishes on $K$, then the nonnegative row and column sums of $A$ both vanish there, so
\[
A_{K,K}\mathbf1=0,
\qquad
\mathbf1^\top A_{K,K}=0.
\]
The same quadratic-form identity shows that the right and left kernels of this block are both spanned by $\mathbf1$. Thus $A$ is a direct sum of nonsingular $M$-matrix blocks and singular blocks having zero row and column sums. This also proves
$\operatorname{range}(A)=\operatorname{range}(A^\top)$.

If $Au=e_i$ is solvable, the index $i$ cannot belong to a singular block, since left multiplication by the all-ones vector of that block would give $0=1$. Thus the coordinates of $u$ on every nonsingular block are uniquely determined, and all nonsingular blocks other than the one containing $i$ contribute zero. The same applies to $v$ and $j$.

It remains only to use the usual inverse-entry comparison on a nonsingular block. Its inverse is entrywise nonnegative. Fix a column $j$ of the inverse. If a largest entry were strictly larger than the diagonal entry in that column, the row-sum expansion over the indices attaining the largest value would force a nonzero null vector in the corresponding principal submatrix, a contradiction. Hence every entry in column $j$ is at most the $j$th diagonal entry. Applying the same argument to the transpose bounds it by the corresponding row diagonal entry. This gives the displayed inequality. Every principal submatrix satisfies the same off-diagonal sign and nonnegative row- and column-sum assumptions, so the argument applies verbatim.

\section{Uniqueness of Limiting Optimal Factors} \label{app:limiting-factors}

\subsection{Proof of Proposition~\ref{prop:ogm-limiting-factors}}
For each $k$, let $b^{(k)}$ and $\delta^{(k)}$ be the vectors constructed in
Lemma~\ref{lem:ogm-TC-projection}. Since $r_\mathtt{OGM}
    \leq\Phi(\delta^{(k)})
    \leq r^{(k)}$,
compactness of $\Delta_{N+1}$ and uniqueness in
Lemma~\ref{lem:ogm-simplex-prob} give
$\delta^{(k)}\to\delta^\star$. For each $i<N$, every quantity in the
chain proving~\eqref{eq:ogm-projected-chain} therefore converges to
$r_\mathtt{OGM}$. Hence
\[
    b_i^{(k)}\to1,
    \qquad
    \frac{T_{i,i}^{(k)}b_i^{(k)}}
    {\sum_{j=0}^i\delta_j^{(k)}}\to1,
\]
and consequently $T_{i,i}^{(k)}
    \to\sum_{j=0}^i\delta_j^\star
    =(T_F^\star)_{i,i}$.
The terminal chain similarly gives
\[
    T_{N,N}^{(k)}\to1,
    \qquad
    b_N^{(k)}\to1.
\]
Thus $b^{(k)}\to\mathbf1$. Since $b^{(k)}\leq C^{(k)}\mathbf1\leq\mathbf1$,
we have $C^{(k)}\mathbf1\to\mathbf1$. Moreover, the sign constraints on
$C^{(k)}$ give $C^{(k)}\to I=C_F^\star$.
The gap in~\eqref{eq:ogm-cut-r} also converges to zero. Since
$b_j^{(k)}\to1$, every entry of $T^{(k)}$ above the first superdiagonal
converges to zero. Finally, the equations
$T^{(k)}b^{(k)}=e_N$ give
\[
    T_{i,i+1}^{(k)}\to-(T_F^\star)_{i,i}
    \qquad \forall 0\leq i<N.
\]
Therefore $T^{(k)}\to T_F^\star$.

\subsection{Proof of Proposition~\ref{prop:ogmg-limiting-factors}}
For each $k$, choose $b^{(k)}$ as in~\eqref{eq:ogmg-backward-solution} and
define $q^{(k)},D_i^{(k)}$, and $\delta^{(k)}$ as in
\eqref{eq:ogmg-green-quantities}--\eqref{eq:ogmg-delta-from-TC}. Since $r_\mathtt{OGM}
    \leq \Phi(\delta^{(k)})
    \leq r^{(k)}$,
compactness of $\Delta_{N+1}$ and uniqueness in
Lemma~\ref{lem:ogm-simplex-prob} give
$\delta^{(k)}\to\delta^\star$. Since $q_0^{(k)}=1$, the terminal chain gives
\[
    1\leq D_0^{(k)}
    =\frac{b_0^{(k)}}{\delta_N^{(k)}}
    \leq\frac{\sqrt{r^{(k)}}}{\delta_N^{(k)}}
    \longrightarrow1.
\]
Hence
\[
    b_i^{(k)}q_i^{(k)}
    =D_0^{(k)}\delta_{N-i}^{(k)}
    \longrightarrow\delta_{N-i}^\star>0.
\]
Thus $T_{i,i}^{(k)}>0$ for every $i$ and all sufficiently large $k$, and
\eqref{eq:ogmg-singular-inverse-bound} gives $q_i^{(k)}\leq1$. Therefore
each $b_i^{(k)}$ is bounded away from zero, and
\eqref{eq:ogmg-diagonal-conditions} bounds every diagonal entry of
$T^{(k)}$.

Moreover, $C^{(k)}\mathbf1=e_0$ and the sign constraints give $\sum_{j<i}(-C_{i,j}^{(k)})=1$ for all $1\leq i\leq N$. Hence, $C^{(k)}$ is bounded. Also,
\[
    (T^{(k)})^\top\mathbf1
    =(C^{(k)})^{-\top}
      \bigl((C^{(k)})^\top(T^{(k)})^\top\mathbf1\bigr)
    \geq0,
\]
and hence
\[
    \sum_{i<j}(-T_{i,j}^{(k)})
    \leq T_{j,j}^{(k)}
    \qquad \forall 1\leq j\leq N.
\]
Thus $(T^{(k)},C^{(k)})$ is bounded. Any convergent subsequence has a limit
that is feasible in~\eqref{eq:ogmg-TC-problem} at $r_\mathtt{OGM}$. The
equality case of Lemma~\ref{lem:ogmg-TC-projection} forces this limit to be
$(T_G^\star,C_G^\star)$.

\subsection{Proof of Proposition~\ref{prop:lemni-limiting-factors}}
For each $k$, use the construction in the proof of
Lemma~\ref{lem:lemni-TC-projection} to define
$b^{(k)},E_i^{(k)},D_i^{(k)},R_i^{(k)},\rho_i^{(k)}$, and $m^{(k)}$. The
projected scalar chains satisfy
\[
    \frac1{\Omega_N}
    \leq
    \max_{0\leq i\leq N}
    \frac{(\rho_i^{(k)}-\rho_{i+1}^{(k)})^2}
    {\rho_i^{(k)}(1-(\rho_{i+1}^{(k)})^2)}
    \leq m^{(k)}\leq\sqrt{r^{(k)}}.
\]
Compactness and uniqueness in Lemma~\ref{lem:lemni-scalar-prob} therefore give
\[
    \rho_i^{(k)}\to\rho_i
    \qquad \forall 0\leq i\leq N+1,
    \qquad
    m^{(k)}\to\frac1{\Omega_N}.
\]
Every local term at the limiting scalar sequence equals $1/\Omega_N$, so the
path ratios in~\eqref{eq:lemni-path-inequality} also converge to
$1/\Omega_N$. The radial inequalities then give
$R_i^{(k)}/R_{i+1}^{(k)}\to1$. Since
$R_{N+1}^{(k)}=E_N^{(k)}=1$, it follows that $R_i^{(k)}\to1$ for every $i$.
Hence $E_i^{(k)}$ and $D_i^{(k)}$ converge to the values in
\eqref{eq:lemni-equality-path}.

The derivation of~\eqref{eq:lemni-path-inequality} also gives
\[
\frac{(E_i^{(k)}-E_{i-1}^{(k)})^2
 +(D_i^{(k)}-D_{i+1}^{(k)})^2}
 {2E_i^{(k)}D_i^{(k)}}
\leq
m^{(k)}
\frac{T_{i,i}^{(k)}b_i^{(k)}}{E_i^{(k)}}
\frac{b_i^{(k)}}{D_i^{(k)}}
\leq m^{(k)}.
\]
Both factors in the middle lie in $[0,1]$, while the left- and right-hand
sides converge to $1/\Omega_N$. Therefore
\[
    \frac{b_i^{(k)}}{D_i^{(k)}}\to1,
    \qquad
    \frac{T_{i,i}^{(k)}b_i^{(k)}}{E_i^{(k)}}\to1.
\]
Thus $b_i^{(k)}$ is bounded away from zero and
\[
    T_{i,i}^{(k)}\to(T_L^\star)_{i,i}
    \qquad \forall 0\leq i\leq N.
\]
Since $(T^{(k)})^\top\mathbf1
    =(C^{(k)})^{-\top}
      \bigl((C^{(k)})^\top(T^{(k)})^\top\mathbf1\bigr)
    \geq0$,
the sign constraints give
\[
    \sum_{i<j}(-T_{i,j}^{(k)})
    \leq T_{j,j}^{(k)}
    \qquad \forall 1\leq j\leq N.
\]
Hence $T^{(k)}$ is bounded. Its diagonal entries are positive for
all sufficiently large $k$, so $(T^{(k)})^{-1}\geq0$. Therefore
\[
    C^{(k)}\mathbf1
    =(T^{(k)})^{-1}
      T^{(k)}C^{(k)}\mathbf1
    \geq0.
\]
Together with the unit diagonal and nonpositive entries below the diagonal of
$C^{(k)}$, this shows that $C^{(k)}$ is bounded.

Any convergent subsequence has a limit that gives a feasible tuple in
\eqref{eq:lemni-TC-problem} at $r_\mathtt{Lemni}$. The equality case of
Lemma~\ref{lem:lemni-TC-projection} forces this limit to be
$(T_L^\star,C_L^\star)$.

\section{Discovery of Optimal Methods as Necessary Completions} \label{app:rediscovery}

The equality cases above determine the triangular factors without using the
known method formulas. Solving the corresponding zero-slack completions
recovers the three methods directly and independently verifies that they
attain the lower bounds.

\subsection{A Rediscovery of OGM} \label{subsec:discovering-ogm}

Take $C=C_F^\star$, $T=T_F^\star$, and $r=r_\mathtt{OGM}$. The recurrence
\eqref{eq:ogm-rate-recursion} gives
\[
T_F^\star\one=e_N,
\qquad
\one^\top T_F^\star=(\delta^\star)^\top.
\]
Together with the upper bidiagonal form and signs of $T_F^\star$, these
identities verify all factor constraints in~\eqref{eq:ogm-TC-problem}.
Every diagonal condition in~\eqref{eq:ogm-diagonal-conditions} is tight. Hence
the zero-slack equations in Lemma~\ref{lem:ogm-eliminate-W} require the
completion $W$ to satisfy
\begin{equation}\label{eq:forward-tight-completion}
\bigl((T_F^\star)^\top W\bigr)_{n,i}
=
\begin{cases}
(T_F^\star)_{i,i},&n=i,\\[1mm]
\delta_n^\star\delta_i^\star/r_\mathtt{OGM},&0\leq i<n\leq N,\\[1mm]
0,&n<i.
\end{cases}
\end{equation}
Since $(T_F^\star)_{n-1,n}=-(T_F^\star)_{n-1,n-1}$, the equations for entries below the diagonal are
\[
(T_F^\star)_{n,n}W_{n,i}-(T_F^\star)_{n-1,n-1}W_{n-1,i}
=\frac{\delta_n^\star\delta_i^\star}{r_\mathtt{OGM}}
\qquad \forall 0\leq i<n\leq N.
\]
Starting from $W_{i,i}=1$ and summing this recurrence, using
$\sum_{k=0}^m\delta_k^\star=(T_F^\star)_{mm}$ for every $0\leq m\leq N$, yields
\[
W_{n,i}
=\frac{(T_F^\star)_{i,i}}{(T_F^\star)_{n,n}}
+\frac{\delta_i^\star}{r_\mathtt{OGM}}
\left(1-\frac{(T_F^\star)_{i,i}}{(T_F^\star)_{n,n}}\right).
\]
For $n<N$, the first ratio is $\theta_i^2/\theta_n^2$; for $n=N$, it is
$2\theta_i^2/\theta_N^2$; and
$\delta_i^\star/r_\mathtt{OGM}=2\theta_i$. Thus the completion is exactly
$W_\mathtt{OGM}$ from~\eqref{eq:ogm-W}, and is feasible at
$r_\mathtt{OGM}$.

\subsection{A Rediscovery of OGM-G} \label{subsec:discovering-ogmg}

Let $P$ be the upper bidiagonal matrix with diagonal entries one and
superdiagonal entries minus one, and let $J$ be the coordinate-reversal
permutation. For the OGM equality factor $T_F^\star$ in
\eqref{eq:ogm-T-star-r},
\[
T_F^\star=\operatorname{diag}((T_F^\star)_{0,0},\ldots,(T_F^\star)_{N,N})P,
\qquad
S=P^{-\top}J.
\]
The equality factors in
\eqref{eq:ogmg-T-star-r}--\eqref{eq:ogmg-C-star-r} can be written as
\[
T_G^\star
=PJ\operatorname{diag}\bigl(((T_F^\star)_{0,0})^{-1},\ldots,
((T_F^\star)_{N,N})^{-1}\bigr)JP^{-1},
\qquad
C_G^\star=P^\top.
\]
In addition, $C_G^\star\one=e_0$, while $T_G^\star C_G^\star$ has
nonpositive off-diagonal entries, row sums $e_0$, and nonnegative column
sums. Hence all factor constraints in~\eqref{eq:ogmg-TC-problem} hold.

Set
\[
W_G=SW_\mathtt{OGM}^\top S^{-1}=W_\mathtt{OGM-G}.
\]
Using $JP^\top=PJ$, the preceding definitions give
\[
C_G^\star W_G(T_G^\star)^{-\top}
=J\bigl((T_F^\star)^\top W_\mathtt{OGM}\bigr)^\top J,
\qquad
(T_G^\star)^{-1}e_N=J\delta^\star.
\]
By~\eqref{eq:forward-tight-completion}, the matrix on the left has diagonal
$1/(T_G^\star)_{i,i}$ and, for $i>j$, entries below the diagonal
\[
\frac{(e_i^\top(T_G^\star)^{-1}e_N)
(e_j^\top(T_G^\star)^{-1}e_N)}{r_\mathtt{OGM}}.
\]
Moreover, every inequality in~\eqref{eq:ogmg-diagonal-conditions} is tight. Congruence of the semidefinite constraint by
$(T_G^\star)^{-1}$ and $(T_G^\star)^{-\top}$ gives
\[
\operatorname{sym}\bigl(C_G^\star W_G(T_G^\star)^{-\top}\bigr)
-\frac12e_0e_0^\top
-\frac1{2r_\mathtt{OGM}}
 (T_G^\star)^{-1}e_Ne_N^\top(T_G^\star)^{-\top}
\succeq0.
\]
Since every diagonal condition is tight, this matrix must be zero. Thus the completion is
$W_\mathtt{OGM-G}$.

\subsection{A Rediscovery of Lemniscate Acceleration} \label{subsec:discovering-lemni}

Take $C=C_L^\star$, $T=T_L^\star$, and
$r=1/\Omega_N^2$. The recurrence~\eqref{eq:lemni-rate-recursion} and the
telescoping definition of $\phi_i$ give
\[
    TC\mathbf1=e_N,
    \qquad
    \mathbf1^\top TC
    =(\rho_0-\rho_1,\rho_1-\rho_2,\dots,\rho_N-\rho_{N+1}).
\]
A direct multiplication also shows that the first $N$ rows of $TC$ are upper
bidiagonal with nonpositive superdiagonal and that the entries below the diagonal in
its final row are $-(\phi_{j+1}-\phi_j)$. Hence all factor
constraints in~\eqref{eq:lemni-TC-problem} hold. Substitution in
\eqref{eq:lemni-diagonal-conditions} shows that every condition is tight. Using \eqref{eq:lemni-rate-recursion} and the identity
$\rho_j=(1-\rho_{N+1-j})/(1+\rho_{N+1-j})$, the zero-slack equations for
entries below the diagonal in Lemma~\ref{lem:lemni-eliminate-W}
reduce to the recurrence
\[
    W_{n,i}-W_{n-1,i}
    =\Omega_N(\phi_{i+1}-\phi_i)
      (\phi_{N-n+1}-\phi_{N-n})
    \qquad \forall 0\leq i<n\leq N.
\]
Starting from $W_{i,i}=1$ and summing this recurrence gives
\[
    W_{n,i}
    =1+\Omega_N(\phi_{i+1}-\phi_i)
      (\phi_{N-i}-\phi_{N-n})
    \qquad \forall 0\leq i<n\leq N.
\]
Thus the completion is exactly $W_\mathtt{Lemni}$ from
\eqref{eq:lemni-W}.

\section{Clarification of AI assistance}

The authors used AI tools during the preparation of this manuscript. Codex was used to assist with editing, reorganization, consistency checks, and \LaTeX{} maintenance of the draft. In an early exploratory stage, ChatGPT Pro suggested we reformulate using the algebraic decompositions involving upper and lower triangular factors of the $TC$ form. This suggestion was central to our progress here. In its original development, this paper only addressed OGM and OGM-G. Once the paper~\cite{KimRyuDasGupta26} providing Lemniscate acceleration was released (approximately one month before this paper's first public posting), we provided ChatGPT Pro with our working manuscript to extend it to cover Lemniscate. It accomplished this directly, up to copyediting to match the form of our developments in Sections~\ref{sec:ogm} and~\ref{sec:ogmg}.
All mathematical statements, proofs, references, and final editorial decisions in this manuscript are the responsibility of the authors.

\end{document}